\theoremstyle{plain}
\newtheorem{theo}{Theorem}[section]
\newtheorem{zerotheo}{Theorem}
\newtheorem{prop}[theo]{Proposition}
\newtheorem{lemm}[theo]{Lemma}
\newtheorem{coro}[theo]{Corollary}
\newtheorem{sub}[theo]{Sublemma}
\newtheorem{zerocoro}{Corollary}
\theoremstyle{definition}
\newtheorem*{defi}{Definition}
\theoremstyle{remark}
\newtheorem{rema}[theo]{Remark}
\newtheorem*{note}{Note}
\renewcommand{\labelenumi}{(\roman{enumi})}
\renewcommand{\u}{u_i^I}
\newcommand{\V}{\mathcal V}
\newcommand{\hatEst}{\hat{\mathcal{E}}_{st}}
\newcommand{\field}[1]{\mathbb{#1}}
\newcommand{\C}{\field{C}}
\newcommand{\Q}{\field{Q}}
\newcommand{\R}{\field{R}}
\newcommand{\Z}{\field{Z}}
\newcommand{\phist}{\phi_{st}}
\newcommand{\hatvar}{\hat{\varphi}}
\newcommand{\hatvarst}{\hat{\varphi}_{st}}
\newcommand{\hatvarepst}{\hat{\varepsilon}_{st}}
\newcommand{\hatvarvst}{\hat{\varphi}_{st}^v}
\newcommand{\hatH}{\hat{H}}
\newcommand{\sigmn}{\Sigma^{(n)}}
\newcommand{\sigmk}{\Sigma^{(k)}}
\newcommand{\sigmone}{\Sigma^{(1)}}
\newcommand{\sigmpr}{\Sigma^\prime}
\newcommand{\sigmprone}{\Sigma^{\prime(1)}}
\newcommand{\sigmprn}{\Sigma^{\prime(n)}}
\newcommand{\delpr}{\Delta^\prime}
\newcommand{\Cpr}{C^\prime}
\newcommand{\Ipr}{I^\prime}
\newcommand{\Jpr}{J^\prime}
\newcommand{\ipr}{i^\prime}
\newcommand{\jpr}{j^\prime}
\newcommand{\Vpr}{\mathcal{V}^\prime}
\newcommand{\xipr}{\xi^\prime}
\newcommand{\sigmprpr}{\Sigma^{\prime\prime}}
\newcommand{\delprpr}{\Delta^{\prime\prime}}
\newcommand{\Jprpr}{J^{\prime\prime}}
\newcommand{\xiprpr}{\xi^{\prime\prime}}
\newcommand{\delstar}{\Delta_*}
\newcommand{\Vstar}{\mathcal{V}_*}
\newcommand{\xistar}{\xi_*}
\newcommand{\delstartil}{\tilde{\Delta}_*}
\newcommand{\Vstartil}{\tilde{\mathcal{V}}_*}
\newcommand{\xistartil}{\tilde{\xi}_*}
\newcommand{\sigmstar}{\Sigma_*}
\newcommand{\sigmstartil}{\tilde{\Sigma}_*}
\newcommand{\Cstartil}{\tilde{C}_*}
\newcommand{\delprstar}{\Delta^\prime_*}
\newcommand{\Vprstar}{\mathcal{V}^\prime_*}
\newcommand{\xiprstar}{\xi^\prime_*}
\newcommand{\sigmprstar}{\Sigma^\prime_*}
\newcommand{\Cstar}{C_*}
\newcommand{\Cprstar}{C^\prime_*}
\newcommand{\delprprstar}{\Delta^{\prime\prime}_*}
\newcommand{\xiprprstar}{\xi^{\prime\prime}_*}
\newcommand{\sigmprprstar}{\Sigma^{\prime\prime}_*}
\newcommand{\Cprprstar}{C^{\prime\prime}_*}
\newcommand{\tilG}{\tilde{G}}
\newcommand{\tilT}{\tilde{T}}
\newcommand{\OmeI}{\Omega^I}
\newcommand{\OmeIpr}{\Omega^{\Ipr}}
\newcommand{\omeI}{\omega^I}
\newcommand{\omeIpr}{\omega^{\Ipr}}
\newcommand{\barh}{\bar{h}}
\newcommand{\uv}{\langle u_i^I,v\rangle}
\def\l{\langle}
\def\r{\rangle}
\newcommand{\Proj}{\field{P}}
\newcommand{\img}{\sqrt{-1}}
\title{Invariance Property of Orbifold Elliptic Genus for Multi-Fans}
\author{Akio Hattori}
\address{Graduate School of Mathematical Science, University of Tokyo,
Tokyo, Japan}
\email{hattori@ms.u-tokyo.ac.jp}
\begin{document}
\subjclass[2000]{Primary: 58J26, 57R20, 14M25.}

\maketitle

%\input{chp1}
%\input{chp2}
%\input{chp3.rev}
%\input{chp4}
%\input{chp5}
%\input{chp6-7}
%\input{chp8}
%\input{chp9}
%\input{bib}

%chp1, platex2e, March 2007.

\section{Introduction}
\label{sec:1}
The complex elliptic genus was first introduced by Witten \cite{Wit}
and then studied by several authors such as Hirzebruch \cite{Hir},
Bott and Taubes \cite{BT} mainly in connection with its rigidity
property. It was further generalized in two ways; one way to complex
orbifolds and the other to singular projective varieties.

Generalization to singular varieties was given by Borisov-Libgober
\cite{BL2}, \cite{BL3}. They called it singular elliptic genus;
it is defined for Kawamata-log-terminal pairs $(X,D)$ of a variety $X$ and
a $\Q$-divisor $D$.
We shall denote it by $Ell_{sing}(X,D)$. It has an invariant
property with respect to blow-ups. Namely, if $f:\tilde{X}\to X$ is
a blow-up along a non-singular locus in $X$ which is normal crossing to
$Supp(D)$ and $\tilde{D}$ is a divisor on $\tilde{X}$ such that
\begin{equation*}\label{eq:Klift}
 K_{\tilde{X}}+\tilde{D}=f^*(K_X+D),
\end{equation*}
then
\begin{equation}\label{eq:pushEll}
Ell_{sing}(\tilde{X}, \tilde{D})=Ell_{sing}(X,D).
\end{equation}

The formula \eqref{eq:pushEll} is related to the work of Totaro \cite{T}.
He showed that the Chern numbers that can be extended to singular varieties,
compatibly with $IH$-small resolutions, are at most linear combinations of
the coefficients of the elliptic genus. The formula \eqref{eq:pushEll}
implies that the elliptic genus can in fact be defined for projective
varieties with Kawamata-log-terminal singularities, complementing the
result of Totaro. In particular, if $X$ is such a variety and
$f:\tilde{X}\to X$ is a crepant resolution, then
\begin{equation}\label{eq:Ellcrepant}
 Ell(\tilde{X})=Ell_{sing}(X),
\end{equation}
where $Ell_{sing}(X)=Ell_{sing}(X,0)$ and $Ell(\tilde{X})$ denotes the
ordinary elliptic genus of $\tilde{X}$.

Orbifold elliptic genus was introduced by Borizov-Libgober \cite{BL1}
for global quotient complex orbifolds, and then was generalized to general
complex (or more generally to stably complex) orbifolds by Don-Liu-Ma
\cite{DLM}. Borizov and Libgober in \cite{BL3} also defined orbifold
elliptic genus $Ell_{orb}(X,D,G)$ where $G$ is a finite group and $(X,D)$
is a Kawamata-log-terminal $G$-normal pair with smooth $X$ and showed
a similar formula to \eqref{eq:pushEll}. They also proved a formula
\begin{equation}\label{eq:Ellquot}
 Ell_{orb}(X,D,G)=Ell_{sing}(X/G,D_{X/G})
\end{equation}
for a suitably defined divisor $D_{X/G}$. For example, when $D=0$,
$D_{X/G}$ is
given by the following formula. Let $\pi:X\to X/G$ be the quotient map and
$E=\sum(a_i-1)E_i$ the ramification divisor of $\pi$ where the sum runs
over prime divisors $E_i$. Then $D_{X/G}$ is given by
\begin{equation}\label{eq:ramif}
 D_{X/G}=\sum\frac{a_i-1}{a_i}\pi(E_i).
\end{equation}

A similar formula was already given by Batyrev \cite{Bat} for
$E$-function. The $E$-function is a generalization of Hirzebruch's
$\chi_y$-genus to singular varieties. The elliptic genus is also
a generalization of $\chi_y$-genus. Suppose that the fixed point set
$X^g$ of the action of each element $g\in G$ has codimension at least
two. Then the ramification divisor is trivial, and the formula
\eqref{eq:Ellquot} for $D=0$ reduces to
\begin{equation}\label{eq:Ellquot2}
 Ell_{orb}(X,G)=Ell_{sing}(X/G).
\end{equation}
If moreover $X/G$ has a crepant resolution $\tilde{X}\to X/G$, then we get
\[ Ell_{orb}(X,G)=Ell(\tilde{X}), \]
by \eqref{eq:Ellcrepant}. This sort of results goes back to \cite{DHVW}
where stringy Euler number is considered instead of singular elliptic
genus, and is related to an
observation of Mckay concerning the relation between minimal resolutions
of quotient singularities $\C^2/G$ and the representations of $G$.

Borizov and Libgober define in \cite{BL3} not only the genus but a class
$\mathcal{E}\ell\ell_{orb}(X,D,G)$ for $G$ normal pair $(X,D)$ in the Chow
ring $A_*(X)$ in such a way that the elliptic genus $Ell_{orb}(X,D,G)$
becomes the degree of the top component of $\mathcal{E}\ell\ell_{orb}(X,D,G)$.
They then prove the functorial property
\begin{equation}\label{eq:pushEll2}
f_*\mathcal{E}\ell\ell_{orb}(\tilde{X},\tilde{D},G)=
\mathcal{E}\ell\ell_{orb}(X,D,G),
\end{equation}
where $(\tilde{X},\tilde{D})$ and $(X,D)$ are $G$-normal pairs related
together as in \eqref{eq:pushEll}. The formula \eqref{eq:pushEll2} is
sometimes called change of variables formula. The main result of \cite{BL3}
is the following formula
\begin{equation}\label{eq:Ellquot3}
 \pi_*\mathcal{E}\ell\ell_{orb}(X,D,G)=\mathcal{E}\ell\ell_{sing}(X/G,D_{X/G}),
\end{equation}
where $\mathcal{E}\ell\ell_{sing}(X/G,D_{X/G})$ is a class in $A_*(X/G)$
defined in a similar way such that the degree of its top component
coincides with $Ell_{sing}(X/G,D_{X/G})$. \eqref{eq:Ellquot} immediately
follows from \eqref{eq:Ellquot3}. There is an equivariant
version due to Waelder \cite{Wae1} which is a good reference for this
subject. See also \cite{Wae2}.

The (complex) orbifold elliptic genus is defined for compact, stably almost
complex orbifolds in general. We shall write it $\hatvar(X)$.
It essentially depends on orbifold
structures. There are many examples of orbifolds with the same underlying
space but with different orbifold structures and different orbifold elliptic
genus. This phenomenon is related to the above formulae \eqref{eq:Ellquot},
\eqref{eq:Ellquot2} and \eqref{eq:Ellquot3}.
The singular elliptic genus is defined by using resolution of singularities.
One might hope to get a direct, topological definition of the singular
elliptic genus which can be extended to a larger class of singular spaces.
In the case of orbifolds one already has orbifold elliptic genus. One
wants to get a suitable notion of $\Q$-divisors and Chow ring which can
be applied to formulate change of variables formula.

There is a class of orbifolds, called torus orbifolds, in which
one can build a satisfactory theory. A torus orbifold $X$ of
dimension $2n$ is, roughly speaking, a $2n$-dimensional compact
stably almost complex orbifold with an action of an
$n$-dimensional torus $T$. Torus orbifolds can be considered as
topological counterparts of $\Q$-factorial toric varieties. For a
$\Q$-factorial toric variety $X$ of dimension $n$ there is
associated a simplicial fan $\Delta$ in an $n$-dimensional lattice
$L$. Algebro-geometric properties of a toric variety are
translated to those of the fan associated to the variety. To each
edge (one dimensional cone) of $\Delta$ there corresponds an
irreducible $T$-divisor $D_i$ and there is an exact sequence
\begin{equation}\label{eq:Chow}
 0\longrightarrow L^* \longrightarrow \bigoplus_i \Z\cdot D_i
 \longrightarrow A_{n-1}(X)\longrightarrow 0,
\end{equation}
where $A_{n-1}(X)$ is the $(n-1)$-th Chow group of $X$, see e.g.
\cite{Ful}. We note here that the dual lattice $L^*$ can be identified
with the second cohomology $H^2(BT)$ of a classifying space of $T$,
and the middle term of \eqref{eq:Chow} is identified with
the second equivariant cohomology $H_T^2(X;\Q)$ after tensored by $\Q$.
It is also identified with the degree two part of the
Stanley--Reisner ring of $\Delta$ considered as a simplicial set.

To a torus orbifold there is associated a simplicial multi-fan, an
analogue of fan, and an integral edge vector is assigned to each
$1$-dimensional cone of the multi-fan, see \cite{M}, \cite{HM1}.
These vectors are not primitive in general and they reflect the orbifold
structure of the torus orbifold whereas one always takes primitive vectors
when dealing with toric varieties.
Moreover divisors over a torus orbifold can be defined as homogeneous
elements of degree two in the Stanley-Reisner ring of the simplicial
set associated to the multi-fan.

The (equivariant, stabilized) orbifold elliptic genus
$\hatvarst(\Delta,\V,\xi)$
is defined for triples of a simplicial multi-fan $\Delta$,
a set of edge vectors $\V$ and a $\Q$-divisor $\xi$. We can
go further to define orbifold elliptic class $\hatEst(\Delta,\V,\xi)$ of
such triples in the Stanley-Reisner ring with $\Q$-coefficients.
When $\Delta$ is the multi-fan associated to a torus orbifold $X$ the
orbifold elliptic genus and orbifold elliptic class are the invariants
of $X$ and the divisor $\xi$. The push-forward from the
Stanley-Reisner ring to the complex numbers $\C$ sends the orbifold
elliptic class to the orbifold elliptic genus.

Birational morphisms between multi-fans can be defined in such a
way that they correspond to geometric birational morphisms between
toric varieties. Moreover if $f:\delpr\to \Delta$ is a birational
morphism, and $\V$ and $\Vpr$ are sets of generating edge vectors
for $\Delta$ and $\delpr$ respectively, then the pull-back $f^*$
and the push-forward $f_*$ between the corresponding
Stanley-Reisner rings are defined depending on not only $f$ but
$\V$ and $\Vpr$.

The main theorem of the present paper can be stated
in the following form.

\begin{zerotheo}
Let $(\Delta,\V,\xi)$ be a triple of simplicial multi-fan, a set
of edge vectors and a $\Q$-divisor. Let $f:\delpr\to \Delta$
be a birational morphism and $\Vpr$ a set of edge vectors for
the multi-fan $\delpr$. Then
\begin{equation*}\label{eq:pushEll3}
 f_*\hatEst(\delpr,\Vpr,f^*(\xi))=\hatEst(\Delta,\V,\xi).
\end{equation*}
\end{zerotheo}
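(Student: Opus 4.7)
The plan is to adapt the Borisov--Libgober proof of \eqref{eq:pushEll2} to the combinatorial setting of multi-fans. First I would establish a weak factorization statement for birational morphisms of simplicial multi-fans: any $f:\delpr\to \Delta$ should decompose as a composition of elementary birational morphisms, each of which is a star subdivision of a single simplicial cone (or its inverse). Since $f_*$ is functorial and $f^*$ contravariant with respect to composition, this reduces the theorem to the case of a single star subdivision, where $\delpr$ differs from $\Delta$ only by the insertion of one new edge vector $v_0$ inside some cone $\sigma\in\Delta$.

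Next I would invoke equivariant localization in the Stanley--Reisner ring. The class $\hatEst(\Delta,\V,\xi)$ can be written as a sum of local contributions indexed by the $n$-dimensional cones of $\Delta$ (the ``vertices''), in the spirit of Waelder's equivariant formulation \cite{Wae1}. Under a star subdivision along $\sigma$ only the $n$-cones $I$ containing $\sigma$ are affected; each such $I$ is replaced by several new $n$-cones $\Ipr_1,\dots,\Ipr_m$ in $\delpr$. The identity in the theorem then reduces to a local identity at each $I$: the sum of the contributions at the $\Ipr_j$, pushed forward by $f_*$, must equal the contribution at $I$.

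The heart of the proof is this local identity. Writing the local contributions explicitly in terms of the theta function $\theta(z;\tau)$ evaluated at linear combinations of edge vectors, with exponents dictated by the divisors $\xi$ and $f^*\xi$, the identity becomes a finite equality of theta-function expressions. The definition of $f^*(\xi)$ on the newly created edge $v_0$ is tailored so that the pulled-back weights absorb the Jacobian-type factor produced by the subdivision; this matches the classical pattern found in \cite{BL3}. I would deduce the identity from quasi-periodicity of $\theta$ together with its addition formula, isolating the dependence on $v_0$ and checking that the residues at the zeros of the relevant theta functions cancel in pairs among the $\Ipr_j$.

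The hard part will be the theta-function identity in the presence of \emph{non-primitive} edge vectors: whereas the Kawamata-log-terminal/toric version only needs to deal with primitive lattice vectors, a star subdivision of a multi-fan may introduce an edge vector $v_0$ whose relation to the surrounding edges involves non-trivial determinants, and the local orbifold group at each $\Ipr_j$ is governed by these determinants rather than by a single multiplicity. I would need to verify that these determinants enter the theta-function arguments exactly in the way required for the identity to hold, and that the stabilization built into $\hatvarst$ absorbs the remaining denominators. A secondary technical point is making the factorization step rigorous for multi-fans, where distinct cones may overlap in the ambient vector space: one must check that the star-subdivision operation is well defined on the underlying simplicial set and respects the set of edge vectors $\V$, so that the reduction to the elementary case is legitimate.
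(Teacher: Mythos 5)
Your proposal takes a genuinely different route from the paper, and the first step has a serious gap.

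The paper never invokes weak factorization of birational morphisms. A birational morphism $\rho:\delpr\to\Delta$ of multi-fans (conditions a)–c) in Section~6) simultaneously subdivides every top cone of $\Delta$, and the Morelli--W\l{}odarczyk/Oda-type factorization of such a subdivision into star subdivisions and inverse star subdivisions is a deep theorem that is not established for simplicial multi-fans with arbitrary weights $w^\pm$, overlapping cones, and non-primitive edge vectors. You flag this yourself as a ``secondary technical point,'' but in fact it is the load-bearing wall of your argument; without it the reduction to a single star subdivision does not get off the ground. Similarly, the theta-function identity at a single new edge $v_0$ is asserted (via quasi-periodicity, the addition formula, residue cancellation) but not actually proved, and in the presence of non-primitive $\V$ and $\Vpr$ it is precisely the hard step.

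The paper avoids both difficulties. Its genus-level local statement is Theorem \ref{thm:Bn}, asserting $b_J(\delpr,\Vpr,\xipr;\Delta,\V,\xi)=0$ for all faces $J$ of a fixed top cone, where the $b_J$ are the local contributions appearing in the character formula of Theorem \ref{thm:BL2} (that character formula does play the role of your ``localization'' step). Instead of factoring $\rho$ or proving a theta identity directly, the paper proves $B_n$ by induction on $n$: it glues $\delpr(I)$ and $\Delta(I)$ along their common boundary (after a barycentric-type thickening depending on a parameter $m$) into a \emph{complete} auxiliary multi-fan $\delstartil$ whose divisor $\xistartil$ is globally equal to a single element $u^I(\xi)\in L_\V^*\otimes\Q$. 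The vanishing theorem (Theorem \ref{theo:varrigid}), itself a consequence of $SL_2(\Z)$-rigidity of the Jacobi form, forces $\hatvarst(\delstartil,\Vstartil,\xistartil)=0$. Letting $m\to\infty$ kills the correction terms from the auxiliary boundary cells (Lemma \ref{lemm:Cn}), leaving $b_I(\delpr,\Vpr,\xipr;\Delta,\V,\xi)=0$. The class-level Theorem is then obtained from the genus statement by a local/Mayer--Vietoris argument: one restricts to each $I\in\sigmn$, where the multi-fan becomes an honest fan, notes $\iota_I^*(x)=|H_I|u_I\pi_*(x)$ on $\Delta(I)$, and uses functoriality of $\rho_*$ (Proposition \ref{prop:rhostar}) together with the genus-level equality on $\Delta(I)$. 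If you want to pursue your route, you would need to supply both a factorization theorem in the multi-fan setting and an explicit proof of the elementary theta identity; the rigidity/vanishing mechanism is a way around both.
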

\begin{zerocoro}
Under the above situation
\begin{equation}\label{eq:pushEll4}
 \hatvarst(\delpr,\Vpr,f^*(\xi))=\hatvarst(\Delta,\V,\xi).
\end{equation}
\end{zerocoro}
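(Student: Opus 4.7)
The plan is to deduce the corollary from the Theorem by applying a single further push-forward to the scalars, exploiting functoriality of the push-forward construction on Stanley--Reisner rings.

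First, I would recall the principle emphasized in the introduction: the equivariant orbifold elliptic genus $\hatvarst$ is the image of the elliptic class $\hatEst$ under the canonical push-forward from the Stanley--Reisner ring of a simplicial multi-fan down to $\C$. Denote this terminal push-forward by $p_{\Delta,*}$ and $p_{\delpr,*}$ respectively, so that by construction
\begin{equation*}
p_{\Delta,*}\hatEst(\Delta,\V,\xi)=\hatvarst(\Delta,\V,\xi),\qquad
p_{\delpr,*}\hatEst(\delpr,\Vpr,f^*\xi)=\hatvarst(\delpr,\Vpr,f^*\xi).
\end{equation*}

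Next, I would invoke functoriality of push-forwards. A birational morphism $f:\delpr\to\Delta$ with chosen edge vectors $\Vpr$ and $\V$ produces a push-forward $f_*$ between the corresponding Stanley--Reisner rings, and composing $f$ with the terminal morphism to the trivial multi-fan yields the terminal morphism out of $\delpr$; thus one has the identity of maps
\begin{equation*}
p_{\Delta,*}\circ f_* \;=\; p_{\delpr,*}.
\end{equation*}
This functoriality is the only ingredient beyond the Theorem that the argument requires, and it should be a formal consequence of how $f_*$ is defined in terms of $\V$ and $\Vpr$ (one would want to have checked this at the level of the definitions earlier in the paper).

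Finally, I would apply $p_{\Delta,*}$ to both sides of the Theorem's equation $f_*\hatEst(\delpr,\Vpr,f^*\xi)=\hatEst(\Delta,\V,\xi)$. The right side becomes $\hatvarst(\Delta,\V,\xi)$ by the definition of the genus, and the left side becomes $p_{\Delta,*}f_*\hatEst(\delpr,\Vpr,f^*\xi)=p_{\delpr,*}\hatEst(\delpr,\Vpr,f^*\xi)=\hatvarst(\delpr,\Vpr,f^*\xi)$ by functoriality. Equating the two gives the corollary.

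The only potential obstacle is verifying the functoriality $p_{\Delta,*}\circ f_* = p_{\delpr,*}$ in the present, genuinely orbifold setting: since the edge vectors entering into $\V,\Vpr$ need not be primitive, one must make sure the multiplicity factors that appear in $f_*$ and in the definitions of $p_{\Delta,*}$, $p_{\delpr,*}$ combine correctly. Once this compatibility of push-forwards is in place, the corollary is immediate.
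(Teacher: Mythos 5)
Your derivation runs in the opposite logical direction from the paper, and inside this paper it is circular. You propose to obtain the Corollary by applying the push-forward to a point to both sides of the Theorem, using the compatibility $\pi_*\circ f_*=\pi_*$; that compatibility is indeed available (it is part (v) of Proposition \ref{prop:rhostar}, valid here because a birational morphism satisfies condition c)). The problem is that the Theorem (Theorem \ref{thm:rhoclass}) is not proved independently of the Corollary: its proof in Section 8 restricts to each $I\in\sigmn$, applies $\pi_*$, and then quotes precisely the genus-level invariance $\hatvarst(\delpr(I),\Vpr|I,\xipr|I)=\hatvarst(\Delta(I),\V|I,\xi|I)$, i.e.\ Theorem \ref{thm:crepant1} (which is the Corollary, restated in Section 6) together with its fan-local variant Remark \ref{rem:crepant}, which rests on the local statement $B_n$ of Theorem \ref{thm:Bn}. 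The paper's actual proof of the Corollary goes the other way: express $\hatvarst$ by the character formula of Theorem \ref{thm:BL2}, prove the local invariance $B_n$ by induction using the vanishing Theorem \ref{theo:varrigid} applied to auxiliary complete multi-fans, show $\delpr$ is complete with $\deg(\delpr_{\Jpr})=\deg(\Delta_{\rho(\Jpr)})$ (\eqref{eq:deg}), and sum; only afterwards is the class-level Theorem deduced. So your argument presupposes a statement whose only proof in the paper uses the very equality you are trying to establish, and it therefore does not constitute a proof of the Corollary.

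Two further points would need attention even if the Theorem were granted as an independent black box. First, $\pi_*\hatEst(\Delta,\V,\xi)$ is by definition $\hatvarepst(\Delta,\V,\xi)\in(H^{**}(BT)\otimes\C)[[q]]$, whereas the Corollary concerns the character-valued genus $\hatvarst\in(R(T)\otimes\C)[[q]]$ defined by \eqref{eq:hatvar}; to pass from equality of the $\hatvarepst$'s to equality of the $\hatvarst$'s you need $ch(\hatvarst)=\hatvarepst$ (the Proposition following Theorem \ref{theo:laurent}) together with injectivity of $ch$ on $(R(T)\otimes\C)[[q]]$, neither of which your sketch records. Second, to know that $\hatvarst(\delpr,\Vpr,f^*\xi)$ lies in $(R(T)\otimes\C)[[q]]$ at all via Theorem \ref{theo:laurent} one needs completeness of $\delpr$, which is not part of the hypotheses but is itself established in Section 7 as part of the paper's proof of the Corollary.
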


The canonical class $K_X$ of a $\Q$-factorial toric variety $X$
corresponds to $K_\Delta=-\sum_ix_i$ where $\Delta$ is the fan
associated to $X$ and $x_i$ is the generator of
the Stanley-Reisner ring corresponding to $D_i$.
For a divisor $D=\sum_ia_iD_i$ we put $\xi=-K_\Delta-\sum_ia_ix_i$.
Then the singular
elliptic genus $Ell_{sing}(X,D)$ is equal to $\hatvarst(\Delta,\V,\xi)$
up to a multiplicative constant depending only on the dimension of $X$
where all the vectors in $\V$ are taken primitive;
cf. Remark \ref{rema:singEll}.

As an example we take a complete non-singular toric variety (more generally a
torus manifold) $X$. Let $G$ be a finite subgroup of the (compact) torus $T$
acting on $X$. Let $a_i$ be the order of the isotropy subgroup of $G$ at
a generic point in $D_i$. Then the ramification divisor of the quotient
map $X\to X/G$ is $\sum_i(a_i-1)D_i$. Let $\Delta$ be the fan associated to
the toric variety $X/G$, and set $\delpr=\Delta$,
$f=id: \delpr \to \Delta$, $\V=\{v_i\}$
with all the $v_i$ primitive and $\Vpr=\{a_iv_i\}$. We then
require $\xi$ to satisfy $f^*(\xi)=-K_{\Delta}=\sum_ix_i$. We have
$\xi=\sum_i\frac1{a_i}x_i$\,; see Remark \ref{rema:xiquot}.
In this case \eqref{eq:pushEll4}
is equivalent to \eqref{eq:Ellquot} with $D=0$ and $D_{X/G}$
given by \eqref{eq:ramif}. Note that $\Vpr$
corresponds to the orbifold structure of $X/G$ which has $X$ itself
as an orbifold chart but $\V$ does not in general.

For the proof of Theorem we first prove Corollary. For that purpose
we use an expression of equivariant orbifold elliptic genus as a
character of
the accompanying torus. That character formula was originally due to
Borisov-Libgober in the case of Gorenstein toric varieties \cite{BL1}
and then was generalized to the case of multi-fans by \cite{HM2}.
The formula behaves well with respect to birational morphisms and
reduces the invariance property of orbifold elliptic
genus to its local version.
The local invariance property is derived from
the rigidity-vanishing property of orbifold elliptic genus which
was exploited in \cite{Hat1} and \cite{Hat2}. Theorem itself follows
from the functorial property of push-forward and the local version
of the genus by using Mayer-Vietoris argument.

The paper is organized as follows. In Section 2 materials
concerning multi-fans which are needed later are given. Orbifold
elliptic class and orbifold elliptic genus of simplicial
multi-fans are introduced in Section 3. The main theorem of this
section is Theorem \ref{theo:laurent}. It states that the
(equivariant) orbifold elliptic genus is a character of the torus.
In the case of multi-fans associated with torus orbifolds the
orbifold elliptic genus is the index of a Dirac operator with
bundle coefficients, from which Theorem follows. For general
simplicial multi-fans a combinatorial proof is needed. In Section
4 vanishing theorems are given generalizing those of \cite{Hat2}.
Section 5 is devoted to a character formula of the
Borisov-Libgober type generalizing that of \cite{HM2}. The
invariance property of orbifold elliptic genus (Corollary above)
is stated in Section 6 as Theorem \ref{thm:crepant1}. Its proof is
given in Section 7 after the local version (Theorem \ref{thm:Bn})
is established. The functorial property of orbifold elliptic class
(Theorem above) is stated as Theorem \ref{thm:rhoclass} and is
proved in Section 8. The final section is devoted to a
generalization of orbifold elliptic genus to not necessarily
simplicial multi-fans. It will be shown that the orbifold elliptic
genus can be defined for triples $(\Delta,\V,\xi)$ with $\xi$
$\Q$-Cartier and, in particular, for $\Q$-Gorenstein pairs
$(\Delta,\V)$.

\section{Preliminaries}\label{sec:2}

We shall recall basic definitions and facts concerning simplicial
multi-fans which will be used in the sequel.
For details we refer to \cite{HM1}, \cite{HM2}.

Let $L$ be a lattice of rank n (the notation $N$ is customary in
literatures instead of $L$). A \emph{simplicial multi-fan} in $L$
is a triple $\Delta=(\Sigma,C,w^{\pm})$. Here $\Sigma$ is an
augmented finite simplicial set, that is, $\Sigma$ is a finite
simplicial set with empty set $*=\emptyset$ added as the unique
$(-1)$-dimensional simplex. $\sigmk$ denotes the $k-1$ skeleton of
$\Sigma$ so that $*\in \Sigma^{(0)}$. We assume that
$\Sigma=\coprod_{k=0}^n\sigmk$, and $\sigmn \not=\emptyset$. We
further assume that any $J\in \Sigma$ is contained in some $I\in
\sigmn$.

$C$ is a map from $\sigmk$ into the set of $k$-dimensional
strongly convex rational
polyhedral cones in the vector space $L_\R=L\otimes \R$ for
each $k$ such that $C(*)=\{0\}$, and if $J$ is a face
of $I$, then $C(J)$ is a face of $C(I)$. We require the
following condition is satisfied.
For any $I\in\Sigma$ the map $C$ restricted on
$\{J\in \Sigma\mid J\subset I\}$ is an isomorphism of ordered sets
onto the set of faces of $C(I)$.

$w^{\pm}$ are maps $\sigmn \to \Z_{\ge 0}$ which, when $\Sigma$
is complete, satisfy certain compatibility conditions, as
we shall explain below.
We set $w(I)=w^+(I)-w^-(I)$.

For each $K\in \Sigma$ we set
\[ \Sigma_K=\{ J\in\Sigma\mid K\subset J\}.\]
It inherits the partial ordering from $\Sigma$ and becomes an augmented
simplicial set where $K$ is the unique
minimum element in $\Sigma_K$.
Let $(L_K)_\R$ be the linear subspace of $L_\R$ generated by $C(K)$.
Put $L_K=L\cap (L_K)_\R$ and define
$L^K$ to be the quotient lattice of $L$ by $L_K$.
For $J\in \Sigma_K$ we define $C_K(J)$ to be
the cone $C(J)$ projected on $L^K\otimes\R$.
We define two functions
\[ {w_K}^\pm\colon  \Sigma_K^{(n-|K|)}\subset \Sigma^{(n)} \to \Z_{\ge 0}\]
to be the restrictions of $w^\pm$ to $\Sigma_K^{(n-|K|)}$. The triple
$\Delta_K:=(\Sigma_K,C_K,{w_K}^\pm)$ is a multi-fan in $L^K$
and is called the
\emph{projected multi-fan} with respect to $K\in \Sigma$.
For $K=\emptyset$, the projected multi-fan $\Delta_K$
is nothing but $\Delta$.

A vector $v\in L_\R$ will be called \emph{generic} if $v$ does not
lie on any linear subspace spanned by a cone in $C(\Sigma)$ of
dimension less than $n$. For a generic vector $v$ we set
$d_v=\sum_{v\in C(I)}w(I)$, where the sum is understood to be zero
if there is no such $I$.

\begin{defi}
A simplicial multi-fan $\Delta=(\Sigma,C,w^\pm)$ is called
\emph{pre-complete} if the integer $d_v$ is independent of
generic vectors $v$. In this case this integer will be called
the {\it degree} of $\Delta$ and will be denoted by $\deg(\Delta)$.
A pre-complete multi-fan $\Delta$ is said to be \emph{complete} if the
projected multi-fan $\Delta_K$ is pre-complete for any $K\in \Sigma$.
\end{defi}

A multi-fan is complete if and only if the projected multi-fan
$\Delta_J$ is pre-complete for any $J\in \Sigma^{(n-1)}$.
Let $v$ be a vector such that
its projection $\bar{v}$ is generic for the multi-fan
$\Delta_K$. For a complete multi-fan we have
\begin{equation*}\label{eq:degK}
 \deg(\Delta_K)=\sum_{I\in S_v(K)}w(I)\quad \text{where}\quad
S_v(K)=\{I\in \Sigma_K^{(n-k)}\mid \bar{v}\in C_K(I)\}.
\end{equation*}

In the sequel we shall often consider a set $\V$ consisting
of non-zero edge vectors $v_i$ for each $i\in \sigmone$ such that
$v_i\in L\cap C(i)$. We do not require $v_i$ to be primitive.
For any $J\in \Sigma$ let $L_{J,\V}$ be the sublattice of $L_J$ generated by
$\{v_i\}_{i\in J}$ and $L_{J,\V}^*$ the dual lattice.
Let $\{u_i^J\}_{i\in J}$ be the basis of $L_{J,\V}^*$ dual to
$\{v_i\}_{i\in J}$. For $I\in \sigmn$ we put
$I(v)=\{i\in I\mid \l \u,v\r<0\}$.
Then it can be easily seen that $S_v(K)$ is written as
\begin{equation}\label{eq:Sv}
S_v(K)=\{I\in \sigmn\mid I(v)\subset K\subset I\}.
\end{equation}

Let $\Delta=(\Sigma,C,w^\pm)$ be a simplicial multi-fan
in a lattice $L$ and $\V=\{v_i\}_{i\in \sigmone}$ a set of prescribed
edge vectors as before. We denote the torus $L_\R/L$ by $T$.
We define the equivariant cohomology $H_T^*(\Delta)$ of a
multi-fan $\Delta$ as the Stanley-Reisner ring of the simplicial
complex $\Sigma$.
Namely let $\{x_i\}$ be indeterminates indexed by $\sigmone$, and
let $R$ be the polynomial ring over the integers generated
by $\{x_i\}$. We denote by $\mathcal{I}$ the ideal in $R$
generated by monomials
$\prod_{i\in J} x_i$ such
that $J\notin \Sigma$. $H_T^*(\Delta)$ is by definition
the quotient $R/\mathcal{I}$.

The dual lattice $L^*$ is canonically identified with
$H^2(BT)$ where $BT$ is a classifying space of $T$ so that
$L^*\otimes \Q$ is identified with $H^2(BT)\otimes\Q$.
We put $L_\V=\bigcap_{I\in \sigmn}L_{I,\V}$. It is a sublattice
of $L$ of the same rank $n$ and $L_\V^*$ contains $L^*$.
$L_\V^*\otimes \Q$ is identified with $L^*\otimes\Q=H^2(BT)\otimes\Q$.
Let $S(L^*)$ be the symmetric algebra of $L^*$. It is identified with
the polynomial algebra $S^*(L)$ over $L$. It is also identified
with $H^*(BT)$. Similarly $S(L^*)\otimes\Q$ and $S(L_\V^*)\otimes\Q$
are identified with $H^*(BT)\otimes\Q$.

We regard $L_\V^*$ as a submodule of $H_T^2(\Delta)$ by the formula
\begin{equation*}\label{eq:structure}
 u=\sum_{i\in \sigmone}\langle u,v_i\rangle x_i .
\end{equation*}
This determines an $S^*(L_\V)$-module structure
of $H_T^*(\Delta)$ and an $H^*(BT)\otimes\Q$-module structure of
$H_T^*(\Delta)\otimes\Q$.
It should be noticed that these module structures depend on
the choice of vectors ${\V}$ as above. In order to emphasize this
dependence we shall write $H_T^*(\Delta,\V)$ instead of $H_T^*(\Delta)$.

For $K\in \sigmk$ let $\{u_i^K\}_{i\in K}$ be the basis of $L_{K,\V}^*$
dual to the basis $\{v_i\}_{i\in K}$ of $L_{K,\V}$ as before.
The restriction homomorphism
$\iota_K^* :H_T^*(\Delta,\V)\to S^*(L_{K,\V})$ is determined by
\begin{equation*}\label{eq:restrict}
 \iota_K^*(x_i)=\begin{cases}
            u_i^K & \text{for}\ i\in K\\
            0     & \text{for}\ i\notin K.
            \end{cases}
\end{equation*}
For $J\subset K$
let $\iota_J^{K*}:S^*(L_{K,\V})\to S^*(L_{J,\V})$ be the induced
homomorphism of the inclusion $\iota_J^K:J\to K$.
If $J\subset K$ then $\iota_J^*=\iota_J^{K*}\circ\iota_K^*$.

For $I\in \sigmn$ we have
\[ \iota_I^*(u)=u \quad \text{for $u\in L_{I,\V}^*$} . \]
In particular $\iota_I^*| L_\V^*$ is
the identity map for any $I\in \sigmn$, and $\iota_I^*$ is an
$S^*(L_\V)$-module
map. Note that $\bigoplus_{I\in \sigmn}\iota_I^*$
embeds $H_T^*(\Delta,\V)\otimes\Q$ into $(H^*(BT)\otimes\Q)^{\sigmn}$.
Its image is given by the subset
consisting of the elements $(u(I))_{I\in \sigmn}$ satisfying
\begin{equation}\label{eq:pistarimage}
 \iota_{I_1\cap I_2}^{I_1*}(u(I_1))
 =\iota_{I_1\cap I_2}^{I_2*}(u(I_2))\ \text{for any $I_1,I_2\in \sigmn$
 with $I_1\cap I_2\not=\emptyset$}.
\end{equation}

Let $S$ be the multiplicative subset of $S^*(L_\V)$ consisting of
non-zero elements in $S^*(L_\V)$ and let $S^{-1}$ denote the
localization by $S$. For $K\in \sigmk$ put
\begin{equation*}\label{eq:HI}
 H_{K,\V}=L_K/L_{K,\V}.
\end{equation*}
$H_{K,\V}$ will be simply written $H_K$ when it is clear that $\V$ is
understood in the context. We define the push-forward
$\pi_*:H_T^{*}(\Delta,\V)\otimes\Q\to S^{-1}H^*(BT)\otimes\Q$ by
\begin{equation*}\label{eq:pistar}
 \pi_*(x)=\sum_{I\in \sigmn}\frac{w(I)i_I^*(x)}{|H_{I,\V}|
\prod_{i\in I}\u}.
\end{equation*}
It is an $H^*(BT)\otimes\Q$-module homomorphism. When $\Delta$ is complete
it is known that the image of $\pi_*$ is contained in $H^*(BT)\otimes\Q)$,
cf. \cite{HM1}.

\begin{rema}
For details concerning torus orbifolds and their associated multi-fans
we refer to \cite{Hat1} and \cite{Hat2}. To a torus
orbifold $X$ a complete simplicial multi-fan $\Delta$ and a set of
edge vectors $\V$ are associated, and there is a canonical map
\[ \kappa: H_T^*(X)\otimes\Q \to H_T^*(\Delta,\V)\otimes\Q \]
which is an $H^*(BT)\otimes\Q$-homomorphism. It becomes an isomorphism
in favorable cases. To each $i\in \sigmone$ there corresponds a
$T$-invariant suborbifold of codimension two whose equivariant
Poincar\'{e} dual is mapped to $x_i\in H_T^2(\Delta,\V)\otimes\Q$ by
$\kappa$. The first Chern class $c_1(X)\in H_T^2(X)\otimes\Q$ is mapped
to $\sum_{i\in \sigmone}x_i$, cf. \cite{Hat1}, Remark 3.2 and
\cite{Hat2}, Remark 2.5.
\end{rema}

\begin{rema}
Suppose that $X$ is a $\Q$-factorial toric variety and $\Delta$ is
its associated fan. In this case one usually takes all the
vectors in $\V=\{v_i\}$ primitive. Then $H_T^2(\Delta,\V)$ is
identified with the module of all $T$-Weil divisors.
Moreover $L^*=H^2(BT)$ is contained in the submodule $Div_TX$ of the
$T$-Cartier divisors, and the quotients $Div_TX/L^*$ and
$H_T^2(\Delta,\V)/L^*$ are identified with the Picard group
$Pic(X)$ and the divisor class group $A_{n-1}(X)$ respectively.
The element $-\sum_{i\in \sigmone}x_i$ corresponds to a canonical
divisor $K_X$. See \cite{Ful} Sections 3.4, 4.3 and 4.4.
\end{rema}

In this paper elements of $H_T^2(\Delta,\V)$ and
$H_T^2(\Delta,\V)\otimes\Q$ will be called \emph{divisors} and
$\Q$-\emph{divisors} respectively. In the same spirit we adopt the
following
\begin{defi}
A divisor $\xi$ is called $T$-\emph{Cartier} if $\iota_I^*(\xi)$
is contained in $L^*=H^2(BT)$ for all $I\in \sigmn$.
\end{defi}

We need another description of the group $H_K=H_{K,\V}$. For simplicity
identify the set $\sigmone$ with $\{1,2,\ldots ,m\}$ and
consider a homomorphism $\eta:\R^m=\R^{\sigmone} \to L_\R$ sending
$\mathbf{a}=(a_1,a_2,\ldots,a_m)$ to $\sum_{i\in \sigmone}a_iv_i$.
For $K\in\sigmk$ we define
\[ \tilG_{K,\V}=\{\mathbf{a} \mid \eta(\mathbf{a})\in L\ \text{and}\
a_j=0\ \text{for $j\not\in K$}\} \]
and define $G_{K,\V}$ to be the image of $\tilG_{K,\V}$ in
$\tilT=\R^m/\Z^m$.
It will be written $G_K$ for simplicity.
The homomorphism $\eta$ restricted on $\tilG_{K,\V}$ induces
an isomorphism
\[\eta_K: G_K\cong H_K=H_{K,\V}\subset T=L_\R/L. \]

Put
\[ G_{\Delta}=\bigcup_{I\in\sigmn}G_I\subset \tilT \quad \text{and}\quad
  DG_{\Delta}=\bigcup_{I\in\sigmn}G_I\times G_I\subset
   G_{\Delta}\times G_{\Delta}. \]

Let $v(g)=\mathbf{a}=(a_1,a_2,\ldots,a_m)\in \R^m$ be
a representative of $g\in \tilT$. The factor $a_i$ will be denoted by
$v_i(g)$. It is determined modulo integers. If $g\in G_I$, then
$v_i(g)$ is necessarily a rational number.

Let $g\in G_I$ and $h=\eta_I(g)\in H_I$. Then
$\eta(v(g))\in L_{I}$ is a representative of $h$ in $L_I$ which
will be denoted by $v(h)$. Then
\begin{equation*}\label{eq:vig}
v_i(g)=\l \u, v(h)\r\quad \text{for $i\in I$}.
\end{equation*}
Define a homomorphism $\chi_i:\tilT\to \C^*$ by
\[ \chi_i(g)=e^{2\pi\img v_i(g)}=e^{2\pi\img \l\u, v(h)\r} \quad
\text{for $g\in G_I$ and $i\in I$}. \]
This will also be written $\chi_I(\u,h)$.
It gives a character of $H_I$ for each $i\in I$.

\section{Orbifold elliptic class and orbifold elliptic genus}\label{sec:3}

Let $\Delta$ be a simplicial multi-fan in a lattice $L$
and $\V=\{v_i\}_{i\in\sigmone}$
a set of prescribed vectors as in Section 2.
Let also $\xi=\sum_{i\in \sigmone}d_ix_i\in H_T^2(\Delta,\V)\otimes \Q$
be a $\Q$-divisor.

We shall define the orbifold elliptic class $\hatEst(\Delta,\V,\xi)$ in
$(H_T^{**}(\Delta.\V)\otimes \C)[[q]]$ and
the orbifold elliptic genus $\hatvarst(\Delta,\V,\xi)$ of
the triple $(\Delta,\V,\xi)$.
The definition of
$\hatvarst(\Delta,\V,\xi)$ is
such that, if $X$ is a complete $\Q$-factorial toric variety
and $\Delta(X)$ is its fan,
then $\hatvarst(\Delta(X),\V,\xi_0)$ coincides with
the orbifold elliptic genus of $X$ regarded as an orbifold,
where all the vectors $v_i$ in $\V$ are taken primitive and
$\xi_0=\sum_{i\in\sigmone}x_i$.

We first consider the function $\Phi(z,\tau)$ of
$z$ in $\C$ and $\tau$ in the upper half plane $\mathcal{H}$
defined by the following formula.
\[ \Phi(z,\tau)=(t^{\frac12}-t^{-\frac12})\prod_{k=1}^{\infty}
 \frac{(1-tq^k)(1-t^{-1}q^k)}
      {(1-q^k)^2},  \]
where $t=e^{2\pi\img z}$ and $q=e^{2\pi\img\tau}$. It is meromorphic
with respect to $(z,\tau)$. Note that $|q|<1$.
Let $A=\begin{pmatrix}a & b\\
                      c & d \end{pmatrix} \in SL_2(\Z)$, and
put $A(z,\tau)=(\frac{z}{c\tau +d},\frac{a\tau +b}{c\tau +d})$.
$\Phi$ is a Jacobi form and satisfies the following transformation
formulae, cf. \cite{HBJ}.
\begin{align}
 \Phi(A(z,\tau))&=(c\tau +d)^{-1}
  e^{\frac{\pi\img cz^2}{c\tau +d}}\Phi(z,\tau),
  \label{eq:Transformula1}\\
 \Phi(z+m\tau +n,\tau)&=(-1)^{m+n}e^{-\pi\img(2mz+m^2\tau)}\Phi(z,\tau)
 \label{eq:Transformula2}
\end{align}
where $m,n\in \Z$.

For $\sigma\in \C$ we set
\[ \begin{split}
\phist(z,\tau,\sigma)&=-\frac{\Phi(z+\sigma,\tau)}
 {\Phi(z,\tau)\Phi(\sigma,\tau)} \\
&=\frac{1-\zeta t}{(1-\zeta)(1-t)}
\prod_{k=1}^{\infty}\frac{(1-q^k)^2(1-\zeta tq^k)(1-\zeta^{-1}t^{-1}q^k)}
 {(1-\zeta q^k)(1-\zeta^{-1}q^k) (1-tq^k)(1-t^{-1}q^k)},
\end{split} \]
where $\zeta =e^{2\pi\img\sigma}$.
From \eqref{eq:Transformula1} and \eqref{eq:Transformula2}
the following transformation formulae
for $\phist$ follow:
\begin{align}
 \phist(A(z,\tau),\sigma)&=
  e^{2\pi\img cz\sigma}\phist(z,\tau,(c\tau +d)\sigma),
  \label{eq:transformula1}\\
  \phist(z+m\tau +n,\tau,\sigma)&=e^{-2\pi\img m\sigma}\phist(z,\tau,\sigma)=
  \zeta^{-m}\phist(z,\tau,\sigma).
  \label{eq:transformula2}
\end{align}

\begin{defi}
We define the (equivariant, stabilized) \emph{orbifold elliptic
class}\linebreak $\hatEst(\Delta,\V,\xi)$ of a triple
$(\Delta,\V,\xi)$ by
\begin{equation}\label{eq:ellipticclass}
 \hatEst(\Delta,\V,\xi)=\sum_{(g_1,g_2)\in DG_{\Delta}}\prod_{i\in \sigmone}
 x_i\zeta^{d_iv_i(g_1)}\phist(-\frac{x_i}{2\pi\img}+v_i(g_1)\tau-
 v_i(g_2),\tau,d_i\sigma).
\end{equation}
\end{defi}

\begin{rema}\label{rema:singEll}
Let $X$ be a complete $\Q$-factorial toric variety and $\Delta$ the
fan associated to $X$. Put $K_\Delta=-\sum_{i\in\sigmone}x_i$. For
$\xi=\sum_id_ix_i$ define
$D_\Delta=\sum_ia_ix_i\in H^2(\Delta,\V)\otimes\Q$ by
$\xi=-K_\Delta-D_\Delta$, that is $a_i=1-d_i$. For the divisor
$D=\sum_ia_iD_i$ in $X$ the singular
elliptic class $\mathcal{E}\ell\ell_{sing}(X,D)$ defined by
Borisov and Libgober is equal to
$(-\Psi(\sigma,\tau))^{\dim X}\hatEst(\Delta,\V,\xi)$ where all vectors
$v_i$ in $\V$ are taken primitive.
\end{rema}

The right hand side of \eqref{eq:ellipticclass}
does not depend on the choice of representatives
$v(g_1),\ v(g_2)$ in view of \eqref{eq:transformula1} and
\eqref{eq:transformula2}.
It is sometimes useful to take a representative $v(g)$ of
$g\in G_\Delta$ such that
\begin{equation}\label{eq:fgi}
 0\leq v_i(g) <1 \ \text{for all $i\in I$}.
\end{equation}
Such a representative is unique. We denote the value
$v_i(g)$ by $f_{g,i}$ for such a representative
$v(g)$. The sum $\sum_{i\in \sigmone}d_if_{g,i}$ will be
denoted by $f_g(\xi)$.

For $h=\eta_I(g)\in H_I$ we put $f_{h,i}=f_{g,i}$. It is equal to
$\l\u,v(h)\r$ for a uniquely determined representative $v(h)$ of
$h$. If $h$ lies
in $H_K$ for $K\in \sigmk$ contained in $I$,
then $f_{h,i}=0$ for $i\not\in K$, and $f_{h,i}$
depends only on $K$.

Note that
%font size decreased to fit into the line (Taras)
\begin{equation*}\label{eq:est}
\small
 \begin{split}
\phist&(-\frac{x_i}{2\pi\img}
 +v_i(g_1)\tau-v_i(g_2),\tau,d_i\sigma) \\
 & =\frac{1}{1-\zeta^{d_i}}\prod_{k=1}^\infty
\frac{(1-q^k)^2}{(1-\zeta^{d_i} q^k)(1-\zeta^{-d_i}q^k)}\cdot
\frac{1-\zeta^{d_i}\xi_i}{1-\xi_i}
\prod_{k=1}^{\infty}\frac{(1-\zeta^{d_i}\xi_iq^k)
 (1-\zeta^{-d_i}\xi_i^{-1}q^k)}
 {(1-\xi_iq^k)(1-\xi_i^{-1}q^k)}, \end{split}
\end{equation*}
where
\[  \xi_i=e^{-x_i}q^{v_i(g_1)}e^{-2\pi\img v_i(g_2)}=
 e^{-x_i}q^{v_i(g_1)}\chi_i(g_2)^{-1}. \]
$e^{-x_i}$ is considered as an element of the completed ring
$H_T^{**}(\Delta,\V)\otimes\Q$. Let $r$ be the least common multiple
of $\{\vert H_I\vert\}_{I\in \sigmn}$.
The right-hand side of \eqref{eq:ellipticclass} defines
an element in $(H_T^{**}(\Delta,\V)\otimes\C)[[q^{\frac1{r}}]]$.
It will be shown in Theorem \ref{theo:laurent} that it in fact lies in
$(H_T^{**}(\Delta,\V)\otimes\C)[[q]]$.

\begin{defi}
The (equivariant, stabilized) \emph{orbifold elliptic genus}
$\hatvarepst(\Delta,\V,\xi)\in (H^{**}(BT)\otimes\C)[[q]]$ is
defined as the image $\pi_*(\hatEst(\Delta,\V,\xi)$.
\end{defi}
Explicitly we have
\begin{prop}\label{prop:est2}
%font size decreased to fit into the line (Taras)
\begin{equation}\label{eq:est2}
 \small
 \begin{split}
 \hatvarepst&(\Delta,\V,\xi) \\
 &= \sum_{I\in\sigmn}
   \frac{w(I)}{|H_I|}
   \sum_{h_1\in H_I\atop h_2\in H_I}
  \prod_{i\in I}\zeta^{d_i\l u_i^I, v(h_1)\r}
 \phist\Bigl(-\frac{u_i^I}{2\pi\img}+ \l\u, v(h_1)\tau-v(h_2)\r, \tau, d_i\sigma\Bigr).
\end{split}
\end{equation}
\end{prop}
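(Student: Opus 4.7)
The plan is to unpack the definitions and apply $\iota_I^*$ termwise to \eqref{eq:ellipticclass} for each $I \in \sigmn$. Since $\iota_I^*(x_i)=u_i^I$ for $i \in I$ and $\iota_I^*(x_i)=0$ for $i \notin I$, the product structure of each summand will dictate which pairs $(g_1,g_2) \in DG_\Delta$ contribute and how factors at indices $i \notin I$ are interpreted as limits.

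The first step is to show that only pairs $(g_1,g_2) \in G_I \times G_I$ contribute. If $(g_1,g_2) \in G_{I'} \times G_{I'}$ with, say, $g_1 \notin G_I$, there is some $i_0 \in I' \setminus I$ with $v_{i_0}(g_1) \not\equiv 0 \pmod{\Z}$. Under $\iota_I^*$, $x_{i_0} \mapsto 0$, and the accompanying factor $\phist(-x_{i_0}/(2\pi\img)+v_{i_0}(g_1)\tau-v_{i_0}(g_2),\tau,d_{i_0}\sigma)$ evaluated at $x_{i_0}=0$ is a well-defined element of $\C[[q^{1/r}]]$, because $1-q^{v_{i_0}(g_1)}\chi_{i_0}(g_2)^{-1}$ is a unit of constant term $1$. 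The remaining factors have finite values under $\iota_I^*$ by the limiting argument discussed below, so the whole summand vanishes; the case $g_2 \notin G_I$ is symmetric.

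For $(g_1,g_2) \in G_I \times G_I$, split $\prod_{i\in\sigmone}$ into $i \in I$ and $i \notin I$. For $i \in I$, the identity $v_i(g_j)=\l u_i^I,v(h_j)\r$ with $h_j=\eta_I(g_j) \in H_I$ converts the $i$-factor under $\iota_I^*$ into $u_i^I\cdot\zeta^{d_i\l u_i^I,v(h_1)\r}\phist(-u_i^I/(2\pi\img)+\l u_i^I,v(h_1)\tau-v(h_2)\r,\tau,d_i\sigma)$, matching the $i$-th factor in \eqref{eq:est2} up to the extra $u_i^I$. For $i \notin I$, the representative of $g_j\in G_I$ has $v_i(g_j)=0$, so $\zeta^{d_iv_i(g_1)}=1$, and the expansion $\Phi(z,\tau)\sim 2\pi\img z$ near $z=0$ gives $\lim_{x_i\to 0}x_i\phist(-x_i/(2\pi\img),\tau,d_i\sigma)=1$, so the $i$-factor becomes $1$ under $\iota_I^*$. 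Consequently $\iota_I^*(\hatEst(\Delta,\V,\xi))$ equals $\prod_{i\in I}u_i^I$ times the inner double sum of \eqref{eq:est2}; dividing by $|H_I|\prod_{i\in I}u_i^I$, multiplying by $w(I)$, summing over $I$, and reindexing via the isomorphism $\eta_I:G_I\cong H_I$ then produces \eqref{eq:est2}.

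The main technical point is the limit $\lim_{x_i\to 0}x_i\phist(-x_i/(2\pi\img),\tau,d_i\sigma)=1$ for $i \notin I$, which traces to the simple pole of $\phist(z,\tau,\sigma)$ at $z=0$; together with the unit argument in the vanishing step, these are the only places where the analytic structure of $\phist$ enters.
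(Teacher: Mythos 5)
Your proof is correct and follows essentially the same route as the paper's: apply $\iota_I^*$ factor by factor, observe that only $(g_1,g_2)\in G_I\times G_I$ survive, show the $j\notin I$ factors collapse to $1$, and then push forward by $\pi_*$. The only cosmetic difference is in the vanishing step, where the paper isolates the factor $\frac{x_j}{1-\xi_j}$ and argues via its Maclaurin expansion having no constant term, whereas you argue via $x_{i_0}\mapsto 0$ multiplied by the (finite) value of $\phist$ at $x_{i_0}=0$; these are the same observation phrased differently.
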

\begin{proof}
We take a representative $v(g)$ for each $g\in G_\Delta$ satisfying
\eqref{eq:fgi}.
Let $(g_1,g_2)\in DG_\Delta$ and $I\in \sigmn$. Note that
$x_j\phist(-\frac{x_j}{2\pi\img}+v_j(g_1)\tau-v_j(g_2),\tau,d_j\sigma)$
is of the form
\begin{equation*}\label{eq:factor}
 \frac{x_j}{1-\xi_j}\phi_j(x_j).
\end{equation*}
If $g_1\not\in G_I$ or $g_2\not\in G_I$,
then there is a $j\not\in I$ such that $v_j(g_1)\not=0$ or
$v_j(g_2)\not=0$. For such $j$ the Maclaurin expansion with
respect to $x_j$ of the factor $\frac{x_j}{1-\xi_j}$ has no constant term.
Noting that $\iota_I^*(x_j)=0$ we have $\iota_I^*(\frac{x_j}{1-\xi_j})=0$
and hence
\begin{equation*}\label{eq:phist}
 \iota_I^*(x_j\zeta^{d_jv_j(g_1)}\phist(-\frac{x_j}{2\pi\img}+v_j(g_1)\tau-
 v_j(g_2),\tau,d_j\sigma))=0.
\end{equation*}
Therefore only elements $(g_1,g_2)$ in $G_I\times G_I$
contribute to $\iota_I^*(\hatEst(\Delta,\V,\xi))$.

Next suppose $(g_1,g_2)$ lies in $G_I\times G_I$. If $j\not\in I$, then
$v_j(g_1)=v_j(g_2)=0$ and $\iota_I^*(x_j)=0$. In particular
$\iota_I^*(\xi_j)=1$ and $\iota_I^*(\frac{x_j}{1-\xi_j})=1,
\ \iota_I^*(\phi_j(x_j))=1$. Hence
\[ \iota_I^*(x_j\zeta^{d_jv_j(g_1)}\phist(-\frac{x_j}{2\pi\img}+
 v_j(g_1)\tau- v_j(g_2),\tau,d_j\sigma))=1. \]
It follows that
\begin{equation}\label{eq:iotaE}
%font size decreased to fit into the line (Taras)
\small
 \begin{split}
 \iota_I^*(\hatEst&(\Delta,\V,\xi)) \\
 &=\iota_I^*\left(\sum_{(g_1,g_2)\in G_I\times G_I}
 \prod_{i\in I}x_i\zeta^{d_iv_i(g_1)}
 \phist(-\frac{x_i}{2\pi\img}+v_i(g_1)\tau
 -v_i(g_2),\tau,d_i\sigma)\right) \\
 &=\!\!\!\!\sum_{(h_1,h_2)\in H_I\times H_I}
 \prod_{i\in I}\u\zeta^{d_i\l\u, v(h_1)\r}
 \phist\Bigl(-\frac{\u}{2\pi\img}+\l\u, v(h_1)\r\tau
 -\l\u, v(h_2)\r,\tau,d_i\sigma\Bigr).
\end{split}
\end{equation}
Here $v(h)$ denotes a representative of $h\in H_I$ such that
$0\leq \l\u, v(h)\r<1$. Then we have
\begin{equation*}\label{eq:hatvar2}
%font size decreased to fit into the line (Taras)
\small
 \begin{split}\pi_*(&\hatEst(\Delta,\V,\xi)) \\
 &=\!\!\!
 \sum_{I\in \sigmn}\frac{w(I)}{|H_I|}
 \sum_{h_1\in H_I\atop h_2\in H_I}
 \prod_{i\in I}\zeta^{d_i\l\u, v(h_1)\r}
 \phist\Bigl(-\frac{\u}{2\pi\img}+\l\u, v(h_1)\r\tau-
 \l\u, v(h_2)\r,\tau,d_i\sigma\Bigr).
 \end{split}
\end{equation*}
\end{proof}

\begin{note}
The right hand side of \eqref{eq:est2} is independent of the
choice of representatives $v(h_1)$ and $v(h_2)$ though we
used representatives satisfying \eqref{eq:fgi} in the proof above.
\end{note}

Put $L_\C=L\otimes \C$. We shall also consider a function
$\hatvarst(\Delta,\V,\xi)$ from $L_\C$ into $\C[[q]]$
defined by
\begin{equation}\label{eq:hatvar}
%font size decreased to fit into the line (Taras)
\small
 \begin{split}
 \hatvarst&(\Delta,\V,\xi)(w) \\
 &= \sum_{I\in\sigmn}
   \frac{w(I)}{|H_I|}
   \sum_{(h_1,h_2)\in H_I\times H_I}
  \prod_{i\in I}\zeta^{d_i\l u_i^I, v(h_1)\r}
 \phist\Bigl(\l u_i^I, -w+\tau v(h_1)-v(h_2)\r, \tau, d_i\sigma\Bigr).
% ,\ w\in L_\C.
\end{split}
\end{equation}
This function $\hatvarst(\Delta,\V,\xi)$ is also called
(stabilized) \emph{orbifold elliptic genus}. Later it will be
shown that $\hatvarst(\Delta,\V,\xi)$ belongs to
$(R(T)\otimes \C)[[q]]$ and $ch(\hatvarst(\Delta,\V,\xi))=
\hatvarepst(\Delta,\V,\xi)$. Here $R(T)$ is the character ring of
the torus $T$ and its elements are considered as functions on
$L_\C$ via the projection $L_\C\to T_\C$ where $T_\C$ is
the complexification of $T$. For $\xi_0=\sum_{i\in \sigmone}x_i$
the genus $\hatvarst(\Delta,\V,\xi_0)$ was introduced in \cite{HM2}
in a non-stabilized form as the orbifold elliptic genus of the
pair $(\Delta,\V)$ and was denoted by $\hatvar(\Delta,\V)$,
cf. also \cite{Hat2}.
$\hatvarst(\Delta,\V,\xi)$ is sometimes written as
$\hatvarst(\Delta,\V,\xi;w,\tau,\sigma)$ to
emphasize the variables.
Also $\tau$ and $\sigma$ are often considered as parameters; in this
case it is sometimes written as $\hatvarst(\Delta,\V,\xi;\tau,\sigma)$ to
emphasize the parameters.

A vector $v\in L$ can be considered as a homomorphism
$\C\ni z\mapsto zv\in L_\C$. Let $v^*(\hatvarst(\Delta,\V,\xi))$ be
the pull-back induced by $v$. It will be denoted by
\[ \hatvarvst(\Delta,\V,\xi), \]
and will be called the (stabilized)
\emph{orbifold elliptic genus along} $v$
of the triple $(\Delta,\V,\xi)$. Explicitly it is given by
\begin{equation}\label{eq:hatvarv}
%font size decreased to fit into the line (Taras)
\small
 \begin{split}
 \hatvarvst&(\Delta,\V,\xi)(z) \\
 &= \sum_{I\in\sigmn}
   \frac{w(I)}{|H_I|}
   \sum_{(h_1,h_2)\in H_I\times H_I}
  \prod_{i\in I}\zeta^{d_i\l u_i^I, v(h_1)\r}
 \phist\Bigl(\l u_i^I, -zv+\tau v(h_1)-v(h_2)\r, \tau, d_i\sigma\Bigr).
\end{split}
\end{equation}

If $J\subset K$ then we have
$ L_J\cap L_{K,\V}=L_{J,\V} $, and hence
$H_J$ is canonically
embedded in $H_K$. We set
\[\hat{H}_K=H_K\setminus \bigcup_{J\subsetneqq K}H_J .\]
The subset $\hat{H}_K$ is characterized by
\begin{equation}\label{eq:0}
 \hat{H}_K=\{h\in H_K\mid \langle u_i^K,v(h)\rangle \not\in \Z
\quad \text{for any $i\in K$} \},
\end{equation}
where $v(h)\in L_K$ is a representative of $h\in H_K$.
For the minimum element $*=\emptyset\in \Sigma^{(0)}$
we set $\hat{H}_*=H_*=0$.
Note that \eqref{eq:0} can be rewritten
as
\begin{equation*}\label{eqn:0bis}
 \hat{H}_K=\{h\in H_K\mid f_{h,i}\not=0
\quad \text{for any $i\in K$} \}.
\end{equation*}

If $K$ is contained in $I\in \sigmn$, then the canonical map
$L_{I,\V}^*\to L_{K,\V}^*$ sends $u_i^I$ to $u_i^K$ for $i\in K$
and to $0$ for $i\in I\setminus K$. Therefore, if $h$ is in $H_K$,
then $\l u_i^I,v(h)\r=0$ for $i\in I\setminus K$, and
$\l u_i^I,v(h)\r=\l u_i^K,v(h)\r$ for $i\in K$. Here $v(h)\in L_K$ is
regarded as lying in $L$. This observation
leads to the following expression of $\hatvarst(\Delta,\V,\xi)$
which is sometimes useful.
\begin{equation}\label{eq:hatvarv2}
%font size decreased to fit into the line (Taras)
\small
 \begin{split}
 \hatvarst(&\Delta,\V,\xi)(w)
 = \sum_{k=0}^n\sum_{K\in\sigmk,h_1\in \hat{H}_K}
   \zeta^{\l u^K(\xi), v(h_1)\r}
   \sum_{I\in \Sigma_K^{(n-k)}}\frac{w(I)}{|H_I|}\cdot \\
   &\sum_{h_2\in H_I}
    \prod_{i\in I\setminus K}
    \phist(-\l u_i^I, w+v(h_2)\r, \tau, d_i\sigma)
    \prod_{i\in K}
    \phist(-\l u_i^I, w-\tau v(h_1)+v(h_2)\r, \tau, d_i\sigma),
\end{split}
\end{equation}
where $u^K(\xi)=\iota_K^*(\xi)=\sum_{i\in K}d_iu_i^K$.

\begin{theo}\label{theo:laurent}
$\hatEst(\Delta,\V,\xi)$ belongs to $(H_T^{**}(\Delta,\V)\otimes\C)[[q]]$.
$\hatvarepst(\Delta,\V,\xi)$ belongs to $(H^{**}(BT)\otimes\C)[[q]]$.
Moreover, if $\Delta$ is complete, then
$\hatvarst(\Delta,\V,\xi)$ belongs to $(R(T)\otimes\C)[[q]]$.
\end{theo}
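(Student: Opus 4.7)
The three assertions are handled in sequence: the first is the main technical content, the second follows by applying $\pi_*$, and the third reduces to a fixed-point-type identity already established for complete multi-fans.

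\textbf{Stage 1 (Reduction to local integrality).} Since $\bigoplus_{I\in\sigmn}\iota_I^*$ embeds $H_T^*(\Delta,\V)\otimes\Q$ into $(H^*(BT)\otimes\Q)^{\sigmn}$ (cf. \eqref{eq:pistarimage}), it suffices to show that each $\iota_I^*(\hatEst(\Delta,\V,\xi))$ contains only integer powers of $q$. The computation \eqref{eq:iotaE} in the proof of Proposition~\ref{prop:est2} already gives
\[
\iota_I^*(\hatEst) = \!\!\!\sum_{(h_1,h_2)\in H_I\times H_I}\!\prod_{i\in I} u_i^I\,\zeta^{d_i\l u_i^I,v(h_1)\r}\phist\!\Bigl(-\tfrac{u_i^I}{2\pi\img}+\l u_i^I,v(h_1)\r\tau-\l u_i^I,v(h_2)\r,\tau,d_i\sigma\Bigr),
\]
so it suffices to work with this explicit sum.

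\textbf{Stage 2 (Character-sum cancellation in $h_2$).} Substituting the product formula for $\phist$ with $\xi_i=e^{-u_i^I}q^{\l u_i^I,v(h_1)\r}\chi_i(h_2)^{-1}$ and expanding every rational factor $(1-\xi_i^{\pm1}q^k)^{-1}$ and $(1-\zeta^{\pm d_i}\xi_i^{\pm1}q^k)$ as a geometric series in the direction that makes the whole product a well-defined element of $(H^*(BT)\otimes\C)[[q^{1/r}]]$, each resulting monomial takes the shape
\[
[\text{polynomial in the }u_j^I]\cdot q^{\,N+\sum_i c_i\l u_i^I,v(h_1)\r}\cdot\zeta^{(\cdots)}\cdot\prod_{i\in I}\chi_i(h_2)^{-c_i}
\]
for some $N\in\Z$ and a multi-index $(c_i)\in\Z^I$ determined by the expansion indices. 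The $h_2$-dependence is solely through the character $h_2\mapsto\exp\bigl(-2\pi\img\bigl\langle\sum_i c_iu_i^I,v(h_2)\bigr\rangle\bigr)$ of the finite abelian group $H_I$, and orthogonality of characters gives
\[
\sum_{h_2\in H_I}\prod_{i\in I}\chi_i(h_2)^{-c_i}=|H_I|\cdot\bigl[\textstyle\sum_i c_iu_i^I\in L_I^*\bigr].
\]
But $\sum_ic_iu_i^I\in L_I^*$ means exactly that $\l\sum_ic_iu_i^I,v(h_1)\r\in\Z$ for every representative $v(h_1)\in L_I$, so the surviving terms have integer $q$-exponent. This proves $\iota_I^*(\hatEst)\in(H^*(BT)\otimes\C)[[q]]$ and hence the first assertion.

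\textbf{Stage 3 (Genus, and the $R(T)$ statement under completeness).} Since the factor $\prod_{i\in I}u_i^I$ appearing in \eqref{eq:iotaE} exactly cancels the denominator $\prod_{i\in I}u_i^I$ in the definition of $\pi_*$, applying the $H^*(BT)\otimes\Q$-module map $\pi_*$ to $\hatEst$ yields $\hatvarepst=\pi_*(\hatEst)\in(H^{**}(BT)\otimes\C)[[q]]$, giving the second assertion. For the third, when $\Delta$ is complete one invokes the Bott-type residue formula for multi-fans from \cite{HM1}, \cite{HM2}: the weighted sum $\sum_I\frac{w(I)}{|H_I|}(\cdots)$ of meromorphic local Lefschetz-type contributions in the variables $e^{\l u_i^I,w\r}$ (with $u_i^I\in L_{I,\V}^*$) collapses, thanks to the compatibility axioms of a complete multi-fan, to a single exponential-polynomial function on $L_\C$ whose exponents lie in the sublattice $L^*$. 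Combined with the $q$-integrality from Stage~2, this places $\hatvarst(\Delta,\V,\xi)$ in $(R(T)\otimes\C)[[q]]$.

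\textbf{Main obstacle.} The delicate point is Stage~2: one must choose a direction of geometric-series expansion uniformly across all factors so the full product lies in $[[q^{1/r}]]$ (treating carefully the borderline cases $\l u_i^I,v(h_1)\r=0$ where the denominator $1-\xi_i$ can be compensated only by the $u_i^I$ prefactor), and one must check that the $h_2$-dependence really factors through a single character on $H_I$ regardless of which factor of $\phist$ a given $\chi_i(h_2)^{-1}$ originated from. Once this bookkeeping is in place, the orthogonality argument is mechanical, and the $R(T)$ statement is a soft consequence of the already-established complete multi-fan fixed-point formula.
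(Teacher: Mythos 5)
Your Stages 1 and 2 track the paper's proof of the first two assertions closely: reduce to $\iota_I^*(\hatEst)$, substitute the product formula, expand, and use orthogonality of characters of $H_I$ to force $u_c=\sum_ic_iu_i^I\in L^*$ and hence $\l u_c,v(h_1)\r\in\Z$. The paper makes the non-negativity of the surviving $q$-exponents explicit by introducing an auxiliary $\tau_1$ with $q_1=e^{2\pi\img\tau_1}$, expanding $\mathcal{E}(q_1)=\sum_s\mathcal{E}_s(q_1)q^s$, and recording the bound $-s\leq\sum_{c_i<0}c_i$, which with $0\leq f_{h_1,i}<1$ gives $\l u_c,v(h_1)\r\geq -s$; your version instead leans on the a priori $[[q^{1/r}]]$ statement with non-negative exponents for the same conclusion. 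These are logically equivalent, and your ``main obstacle'' paragraph correctly flags the borderline case $f_{h_1,i}=0$.

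Stage 3, however, substantially understates the work. Calling the $R(T)$ statement a ``soft consequence of the already-established complete multi-fan fixed-point formula'' does not survive contact with the paper, where this is the longest portion of the proof, and the residue identity (Corollary 2.4 of \cite{HM2}) cannot be applied directly to the raw sum over $I\in\sigmn$. The $q$-exponents in each $\phist(\l u_i^I,-w+\tau v(h_1)-v(h_2)\r,\tau,d_i\sigma)$ are shifted by the angles $f_{h_1,i}$, which depend on both $I$ and $h_1$; to freeze them, the paper first regroups the sum by the unique minimal $K\in\sigmk$ with $h_1\in\hat{H}_K$ via the decomposition \eqref{eq:hatvarv2}, reducing the claim to the quantity $\bar{\varphi}_{h_1}(\Delta,\V,\xi)$. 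Only after this regrouping, together with the same $q_1$-separation of variables, does each coefficient $\bar{\varphi}_s(w,q_1)$ take the precise form to which Corollary 2.4 of \cite{HM2} applies, yielding membership in $R(T)\otimes\C[q_1,q_1^{-1}]$; the Stage-2 non-negativity bound then finishes. Your proposal identifies the correct tool but omits the regrouping and the $q_1$-separation that actually make it applicable, so Stage 3 as written is a gesture rather than a proof.
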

\begin{proof}
Since the map $\bigoplus_{I\in \sigmn}\iota_I^* :
(H^{**}_T(\Delta,\V)\otimes\C)[[q^{\frac1{r}}]] \to
((H^{**}(BT)\otimes\C)[[q^{\frac1{r}}]])^{\sigmn}$ is injective it is
enough to prove that $\iota_I^*(\hatEst(\Delta,\V,\xi))$ belongs to
$(H^{**}(BT)\otimes\C)[[q]]$ for any $I\in \sigmn$.

Fix $I$ and $g_1\in G_I$. In view of \eqref{eq:iotaE} it suffices to
show that
\[ \begin{split}
 &\iota_I^*\left(\sum_{g_2\in G_I}\prod_{i\in I}
 \phist(-\frac{x_i}{2\pi\img}+ \tau v_i(g_1)-v_i(g_2),\tau,d_i\sigma)\right) \\
 =&\sum_{h_2\in H_I}
 \prod_{i\in I}\phist(-\frac{\u}{2\pi\img}+\l\u, v(h_1)\r\tau
 -\l\u, v(h_2)\r,\tau,d_i\sigma)
 \end{split}
\]
belongs to $(H^{**}(BT)\otimes\C)[[q]]$, where $v(g)$ is the
unique representative of $g\in G_I$ satisfying \eqref{eq:fgi}.
We introduce auxiliary variables $\tau_1$ with $\Im(\tau_1)>0$ and
put $q_1=e^{2\pi\img \tau_1}$. We put
\[ \mathcal{E}(q_1)=
 \sum_{g_2\in G_I}\prod_{i\in I}
 \phist(-\frac{x_i}{2\pi\img}+ \tau_1v_i(g_1)-v_i(g_2),\tau,d_i\sigma).
\]
and expand it with respect to $q$:
\[ \mathcal{E}(q_1)= \sum_{s\in \Z_{\geq 0}}\mathcal{E}_s(q_1)q^s. \]

Put $X_i(g_2)=\chi_i(g_2)^{-1}e^{-x_i}q_1^{v_i(g_1)}$. Since
%split one more time to fit into the line (Taras)
\begin{multline*}
\phist(-\frac{x_i}{2\pi\img}
 +\tau_1v_i(g_1)-v_i(g_2),\tau,\sigma_1) \\
 =\frac{1}{1-\zeta^{d_i}}\prod_{k=1}^\infty
\frac{(1-q^k)^2}{(1-\zeta^{d_i}q^k)(1-\zeta^{-d_i}q^k)}\cdot\\
\frac{1-\zeta^{d_i}X_i(g_2)}{1-X_i(g_2)}
\prod_{k=1}^{\infty}\frac{(1-\zeta^{d_i}X_i(g_2)q^k)
 (1-\zeta^{-d_i}X_i(g_2)^{-1}q^k)}
 {(1-X_i(g_2)q^k)(1-X_i(g_2)^{-1}q^k)},
\end{multline*}
$\mathcal{E}_s(q_1)$ is expanded in a Laurent series
\begin{equation*}\label{eq:est3}
 \mathcal{E}_s(q_1)=\sum_{c=(c_1,\ldots,c_n)}a_c
 \sum_{g_2\in G_I}X_1(g_2)^{c_1}\cdots X_{n}(g_2)^{c_n}
\end{equation*}
where $I$ is indexed by $\{1,\ldots,n\}$.
Moreover the sum of negative exponents $c_i$ is bounded below
by $-s$, i.e., $-s\leq\sum_{c_i<0}c_i$ for all $c$.
If we put $u_c=\sum_{i\in I}c_iu_i^I$, then
\[ \iota_I^*(\sum_{g_2\in G_I}X_1(g_2)^{c_1}\cdots X_{n}(g_2)^{c_n})
 =\sum_{h_2}\chi_I(u_c, h_2)e^{-u_c}q_1^{\l u_c,v(h_1)\r}.  \]
Since $u_c$ lies in $L_{I,\V}^*$, $\chi_I(u_c,\ )$ is a character of
$H_I$, and we have
\[ \sum_{h\in H_I}\chi_I(u_c, h)=\begin{cases}
 |H_I|, & \text{if $u_c\in L_I^*$,} \\
  0,    & \text{if $u_c\not\in L_I^*$.}\end{cases} \]
Hence $\iota_I^*(\mathcal{E}_s(q_1))=|H_I|
\sum_{c:u_c\in L_I^*}a_ce^{-u_c}q_1^{\l u_c,v(h_1)\r}$.
Since ${\l u_c,h_1\r}$ is an integer for $u_c\in L_I^*$,
$\iota_I^*(\mathcal{E}_s(q_1))$ is a Laurent series in $q_1$.
Furthermore, since $0<f_{h_1,i}<1$ and $-s\leq\sum_{c_i<0}c_i$,
we have $-s\leq \sum_ic_if_{h_1,i}=\l u_c,v(h_1)\r$.
Noting that $\mathcal{E}(q)=
\sum_{s\in \Z_{\geq 0}}\mathcal{E}_s(q)q^s$, we see that
$\iota_I^*(\mathcal{E}(q))$ has no
negative powers. Thus $\iota_I^*(\mathcal{E}(q))$ is
a power series in $q$. This proves that
$\hatEst(\Delta,\V,\xi)$ belongs to $(H_T^{**}(\Delta,\V)\otimes\C)[[q]]$.
Since $\hatvarepst(\Delta,\V,\xi)=\pi_*(\hatEst(\Delta,\V,\xi)$ it belongs
to $(H^{**}(BT)\otimes\C)[[q]]$.

To prove the second part fix $K\in\sigmk$ and $h_1\in \hat{H}_K$. In view of
\eqref{eq:hatvarv2} it suffices to show that
$\bar{\varphi}_{h_1}(\Delta,\V,\xi)$ defined by
\begin{equation*}\label{eq:bar}
 \bar{\varphi}_{h_1}(\Delta,\V,\xi)(w)=\sum_{I\in\Sigma_K^{n-k}}
   \frac{w(I)}{|H_I|}\sum_{h_2\in H_I}\prod_{i\in I}
 \phist(\l u_i^I, -w+\tau v(h_1)-v(h_2)\r, \tau, d_i\sigma)
\end{equation*}
is expanded in a formal power series in $q$ with coefficients in
$R(T)\otimes\C$.

 Take $g_1\in G_\Delta$ such that
$\eta_K(g_1)=h_1$. Then $v_i(g_1)=0$ for $i\not\in K$ since
$h_1\in\hat{H}_K$. Put $\Sigma^{'(1)}=\bigcup_{I\supset K}I$ and
consider the following two quantities
\[ \begin{split}
 \bar{\mathcal{E}}(q_1)&=
 \sum_{g_2\in G_\Delta}\prod_{i\in \Sigma^{'(1)}}
 \phist(-\frac{x_i}{2\pi\img}+ \tau_1v_i(g_1)-v_i(g_2),\tau,d_i\sigma), \\
 \bar{\varphi}(q_1)(w)&=
 \sum_{I\in\Sigma_K^{n-k}}
 \frac{w(I)}{|H_I|}\sum_{h_2\in H_I}\prod_{i\in I}
 \phist(\l u_i^I, -w+\tau_1v(h_1)-v(h_2)\r, \tau, d_i\sigma),
\end{split} \]
and expand them with respect to $q$:
\[ \begin{split}
 \bar{\mathcal{E}}(q_1)&=
 \sum_{s\in \Z_{\geq 0}}
 \bar{\mathcal{E}}_s(q_1)q^s, \\
 \bar{\varphi}(q_1)(w)&=
 \sum_{s\in \Z_{\geq 0}}
 \bar{\varphi}_s(w,q_1)q^s.
\end{split} \]
Note that
\[ \bar{\varphi}(q)(w)
=\bar{\varphi}_{h_1}(\Delta,\V,\xi)(w). \]

As in the proof of the first part we see that
$\bar{\mathcal{E}}_s(q_1)$ is
expanded in the following form:
\[
 \bar{\mathcal{E}}_s(q_1)=\sum_{c=(c_1,\ldots,c_{m'})}
 b_c\sum_{g_2\in G_\Delta}
 \frac{\prod_{i\in \Sigma^{'(1)}}
 \chi_i(g_2)^{-c_i}e^{-c_ix_i}q_1^{c_iv_i(g_1)}}
 {\prod_{i\in \Sigma^{'(1)}\setminus K}
 (1-\chi_i(g_2)^{-1}e^{-x_i})}.
\]
If we put $x_c=\sum_{i\in \Sigma^{'(1)}}c_ix_i$, then,
by a similar argument to the proof of Proposition \ref{prop:est2},
we see that $\bar{\varphi}_s(w,q_1)$ is expanded in the form
\begin{equation*}\label{eq:laurent2}
 \bar{\varphi}_s(w,q_1)=\sum_c b_c\!\!
 \sum_{I\in \Sigma_K^{n-k}}\frac{w(I)}{|H_I|}\!\sum_{h_2\in H_I}
 \frac{\chi_I(\iota_I^*(x_c),h_2)^{-1}e^{-2\pi\img\l\iota_I^*(x_c), w\r}
 q_1^{\l\iota_I^*(x_c), v(h_1)\r}}
 {\prod_{i\in I\setminus K}
 (1-\chi_I(\u,h_2)^{-1}e^{-\l\u,w\r})}.
\end{equation*}
Corollary 2.4 of \cite{HM2} says that
\begin{equation*}\label{eq:varcqone}
 \sum_{I\in \Sigma_K^{n-k}}\frac{w(I)}{|H_I|}\sum_{h_2\in H_I}
 \frac{\chi_I(\iota_I^*(x_c),h_2)^{-1}e^{-2\pi\img\l\iota_I^*(x_c), w\r}
 q_1^{\l\iota_I^*(x_c), v(h_1)\r}}
 {\prod_{i\in \Sigma^{'(1)}\setminus K}
 (1-\chi_I(\u,h_2)^{-1}e^{-\l\u,w\r})}
\end{equation*}
belongs to $R(T)\otimes\C[q_1,q_1^{-1}]$ as a function of $w$. Hence
$\bar{\varphi}_s(w,q_1)$ also does so.

Then, by a similar argument to the proof of the first part,
$\bar{\varphi}_{h_1}(\Delta,\V,\xi)(w)=
 \bar{\varphi}(\tau)(w)= \sum_{s\in \Z_{\geq 0}} \bar{\varphi}_s(w,q)q^s$
is expanded in a power series
$\sum_{s\in \Z_{\geq 0}}\hatvar_s(w)q^s$ in $q$ with
$\hatvar_s\in R(T)\otimes\C$.
\end{proof}

\begin{prop}
Assume that $\Delta$ is complete. Then
the Chern character $ch:(R(T)\otimes\C)[[q]]\to H^{**}(BT)\otimes\C[[q]]$
sends $\hatvarst(\Delta,\V,\xi)$ to
$\hat{\varepsilon}_{st}(\Delta,\V,\xi)$.
\end{prop}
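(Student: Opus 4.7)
By Theorem \ref{theo:laurent}, $\hatvarst(\Delta,\V,\xi)$ lies in $(R(T)\otimes\C)[[q]]$, so the Chern character $\ch$ can be applied coefficient-by-coefficient in $q$. Recall that $\ch$ is the ring homomorphism characterized by $\ch(\chi_u)=e^u$ for $u\in L^*$, where $\chi_u$ is the character defined by $\chi_u(\exp w)=e^{2\pi\img\l u, w\r}$ on $T$ and $e^u\in H^{**}(BT)\otimes\C$ uses the identification $L^*\cong H^2(BT)$. Equivalently, realizing an element of $R(T)\otimes\C$ as a function on $L_\C$ via the exponential $L_\C\to T_\C$, the map $\ch$ is the formal Taylor expansion at $w=0$.

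Compare formulas \eqref{eq:hatvar} for $\hatvarst(w)$ and \eqref{eq:est2} for $\hatvarepst$: both share the outer structure $\sum_{I\in\sigmn}\frac{w(I)}{|H_I|}\sum_{h_1,h_2\in H_I}\prod_{i\in I}\zeta^{d_i\l u_i^I, v(h_1)\r}\phist(\cdot,\tau,d_i\sigma)$, and differ only in the first argument of $\phist$: in $\hatvarst(w)$ it is $\l u_i^I, -w+\tau v(h_1)-v(h_2)\r$, while in $\hatvarepst$ it is $-\frac{u_i^I}{2\pi\img}+\l u_i^I, v(h_1)\tau-v(h_2)\r$. Since $\phist(z,\tau,d_i\sigma)$ depends on $z$ only through $t=e^{2\pi\img z}$ (visible from its infinite product expansion), the corresponding $t$-values are
\[
 e^{-2\pi\img\l u_i^I, w\r}q^{\l u_i^I, v(h_1)\r}e^{-2\pi\img\l u_i^I, v(h_2)\r}
 \quad\text{and}\quad
 e^{-u_i^I}q^{\l u_i^I, v(h_1)\r}e^{-2\pi\img\l u_i^I, v(h_2)\r},
\]
which are interchanged precisely by the formal substitution $e^{-2\pi\img\l u, w\r}\leftrightarrow e^{-u}$ that defines $\ch$.

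To make this rigorous I would expand each $\phist$ factor in \eqref{eq:hatvar} as a Laurent series in $t$ and in $q$, exactly as in the proof of Theorem \ref{theo:laurent}. For fixed $I$, $h_1$, and $s\geq 0$, the coefficient of $q^s$ is a finite sum over integer tuples $c=(c_i)_{i\in I}$ of terms of the form $a_c(\tau,\sigma;h_1)\cdot e^{-2\pi\img\l u_c, w\r}\chi_I(u_c, h_2)^{-1}$, where $u_c:=\sum_{i\in I} c_iu_i^I\in L_{I,\V}^*$. Summing over $h_2\in H_I$ and invoking orthogonality of $H_I$-characters kills all terms with $u_c\notin L^*$ (note $L_I^*=L^*$ since $I\in\sigmn$), leaving genuine $T$-characters $e^{-2\pi\img\l u_c, w\r}\in R(T)$, on which $\ch$ acts to give $e^{-u_c}$. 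Carrying out the identical $t$- and $q$-expansion directly on \eqref{eq:est2} produces the same expression with $e^{-u_c}$ present from the start, so the $q^s$-coefficients of $\ch(\hatvarst(\Delta,\V,\xi))$ and $\hatvarepst(\Delta,\V,\xi)$ agree for every $s$. The main technical point is the need to perform the $h_2$-summation before applying $\ch$: individual factors carry $u_i^I\in L_{I,\V}^*$ which may lie outside $L^*$ and so are not $T$-characters themselves, and it is only after summing over $h_2$ that one obtains true characters; this is exactly the mechanism already exploited in the proof of Theorem \ref{theo:laurent}.
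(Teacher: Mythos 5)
Your proof is correct and takes essentially the same approach as the paper: observe that $\ch$ corresponds to the formal substitution $e^{2\pi\img\l u,\,\cdot\,\r}\mapsto e^u$, and then compare \eqref{eq:hatvar} with \eqref{eq:est2} term by term. The paper's proof is terser — it states this substitution rule and simply says ``comparing \eqref{eq:est2} with \eqref{eq:hatvar}'' — whereas you usefully spell out the Laurent-expansion/$h_2$-orthogonality step (already implicit via Theorem \ref{theo:laurent}) that makes the term-by-term comparison legitimate.
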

\begin{proof}
The element $t^u$ of the character ring $R(T)$ corresponding to $u\in L^*$
is considered as a function on $L_\C$ defined by $e^{2\pi\img\l u,\ \ \r}$.
On the other hand $ch(t^u)=e^u$ where $L^*$ is identified with
$H^2(BT)$. Hence $ch(e^{2\pi\img\l u,\ \ \r})=e^u$ or
$ch(e^{\l u,\ \ \r})=e^{\frac{u}{2\pi\img}}$. Thus, comparing
\eqref{eq:est2} with \eqref{eq:hatvar}, we see that
\begin{equation}\label{eq:chhatvar}
 ch(\hatvarst(\Delta,\V,\xi))=\hat{\varepsilon}_{st}(\Delta,\V,\xi).
\end{equation}
\end{proof}

\begin{rema}\label{rem:chhatvar}
For a not necessarily complete simplicial multi-fan $\Delta$ we can
consider the function $\hatvarst(\Delta,\V,\xi)$ as a formal power
series in $q$ whose coefficients are rational functions with denominators
of the form $\prod_i(1-\alpha_i t^{u_i}),\ u_i\in L_\V^*,\ \alpha_i\not=0$.
The map $ch$ is extended on such rational functions by
$ch(1-\alpha t^u)=1-\alpha e^u\in H^{**}(BT)\otimes\C$. In this sense
\eqref{eq:chhatvar} holds for general simplicial multi-fans.
\end{rema}

%chp4.tex, platex2e

\section{Rigidity and vanishing property}
Let $N>1$ be an integer. We shall consider the following condition for
$\xi\in H_T^2(\Delta,\V)\otimes\Q$:
\begin{equation}\label{eq:xiNdivide}
 \xi=N\eta +u,\ \text{for some}\ \eta\in H_T^2(\Delta,\V),
 \ u\in L_\V^*\otimes \Q.
\end{equation}
If the condition \eqref{eq:xiNdivide} is satisfied, then the classes of
$u^I(\xi)=\iota_I^*(\xi)$ and $u$ regarded as elements in the quotient
$L_\V^*\otimes \Q/NL_\V^*$ are the same. Let $v$ be a vector
in $L_\V$. Since
$\l \iota_I^*(\eta),v\r$ is an integer for all $I\in \sigmn$, the values
$\l u^I(\xi),v\r$ and  $\l u,v\r$ regarded as elements in $\Q/N\Z$
are equal. It will be denoted by $h(v)\in \Q/N\Z$.

Note that if $\xi$ satisfies \eqref{eq:xiNdivide} with $\eta$ $T$-Cartier,
then $\l u^I(\xi),v\r\equiv \l u,v\r\ \bmod N\Z$ even for any $v\in L$,
and hence $h(v)\in \Q/N\Z$ is also defined for $v\in L$.

The following two theorems are the main results of this section.
Prototypes of these theorems were already given in \cite{DLM},
\cite{Hat1} and \cite{Hat2}. The proofs given here are in the
same line as those of the cited works.

\begin{theo}\label{theo:hatrigid}
Let $(\Delta,\V,\xi)$ be a triple of complete simplicial multi-fan
in a lattice $L$ of rank $n$, a set of edge vectors and
an element of
$H_T^2(\Delta,\V)\otimes\Q$. Let $N>1$ be an integer. Assume that $\xi$
satisfies \eqref{eq:xiNdivide} with $\eta$ $T$-Cartier.
Then
$\hatvarst(\Delta,\V,\xi;\tau,\sigma)$ with $\sigma=\frac{k}{N},\ 1<k<N,$
is rigid, that is, it is a constant as a function of $w\in L_\C$.
If, moreover, $\xi$ does not belong to $NH_T^2(\Delta,\V)$, then
it constantly vanishes.
\end{theo}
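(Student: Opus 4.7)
The plan is to combine holomorphicity from Theorem \ref{theo:laurent} with a quasi-periodicity law derived from the functional equation \eqref{eq:transformula2} of $\phist$, then extract rigidity and vanishing by Fourier-coefficient analysis. This follows the strategy of the earlier rigidity arguments in \cite{DLM}, \cite{Hat1}, \cite{Hat2}.

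First I would use completeness of $\Delta$ and Theorem \ref{theo:laurent} to write
$$\hatvarst(\Delta,\V,\xi)(w)=\sum_{s\ge 0}\sum_{\mu\in L^*}a_{\mu,s}\,e^{2\pi\img\langle\mu,w\rangle}q^s,$$
a power series in $q$ whose coefficients are finite linear combinations of characters of $T$; in particular $\hatvarst$ is entire in $w$. Next, for $v\in L_\V$, I would shift $w\mapsto w+\tau v$ in formula \eqref{eq:hatvar}. Since $v\in L_\V\subset L_{I,\V}$ the pairing $\langle u_i^I,v\rangle$ is an integer, so \eqref{eq:transformula2} applied to each factor $\phist(\langle u_i^I,-w+\tau v(h_1)-v(h_2)\rangle,\tau,d_i\sigma)$ (with $m=-\langle u_i^I,v\rangle$, $n=0$) multiplies it by $\zeta^{d_i\langle u_i^I,v\rangle}$. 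Taking the product over $i\in I$, using $\sum_{i\in I}d_iu_i^I=\iota_I^*(\xi)$ together with the decomposition $\xi=N\eta+u$ with $\eta$ $T$-Cartier and $\sigma=k/N$, the $I$-summand acquires the factor $\zeta^{\langle\iota_I^*(\xi),v\rangle}=\zeta^{N\langle\iota_I^*(\eta),v\rangle}\zeta^{\langle u,v\rangle}=e^{2\pi\img k\langle u,v\rangle/N}$ (because $\langle\iota_I^*(\eta),v\rangle\in\Z$ and $\zeta^N=e^{2\pi\img k}=1$). Crucially this quantity is independent of $I$, so summing over $I,(h_1,h_2)$ yields the quasi-periodicity
$$\hatvarst(w+\tau v)=e^{2\pi\img k\langle u,v\rangle/N}\,\hatvarst(w)\qquad(v\in L_\V).$$

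For rigidity I would plug the $q$-expansion into this identity. Since $e^{2\pi\img\langle\mu,w+\tau v\rangle}=e^{2\pi\img\langle\mu,w\rangle}q^{\langle\mu,v\rangle}$, matching the coefficient of $e^{2\pi\img\langle\mu,w\rangle}q^t$ on both sides produces the recursion
$$a_{\mu,t-\langle\mu,v\rangle}=e^{2\pi\img k\langle u,v\rangle/N}\,a_{\mu,t}.$$
Because $a_{\mu,s}=0$ for $s<0$, iterating forces $a_{\mu,s}=0$ for all $s$ as soon as $\langle\mu,v\rangle\ne 0$ for some $v\in L_\V$. As $L_\V$ has full rank $n$, only the mode $\mu=0$ survives, so $\hatvarst$ is a constant in $w$. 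For the vanishing statement, the surviving relation reduces to $a_{0,t}=e^{2\pi\img k\langle u,v\rangle/N}a_{0,t}$ for every $v\in L_\V$; if the character $v\mapsto e^{2\pi\img k\langle u,v\rangle/N}$ of $L_\V$ is non-trivial, then every $a_{0,t}$ vanishes. The triviality condition $ku\in NL_\V^*$ is equivalent to $\xi\in NH_T^2(\Delta,\V)$ once the decomposition ambiguity $(\eta,u)\mapsto(\eta+\eta',u-N\eta')$, with $\eta'\in H_T^2(\Delta,\V)\cap(L_\V^*\otimes\Q)$, is taken into account; hence $\xi\notin NH_T^2(\Delta,\V)$ forces $\hatvarst\equiv 0$.

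The principal obstacle is verifying that the quasi-periodicity factor is genuinely $I$-independent, which rests on the $T$-Cartier hypothesis on $\eta$: without integrality of $\langle\iota_I^*(\eta),v\rangle$ the cancellation $\zeta^{N\cdot\mathrm{integer}}=1$ would not be available and the factor $\zeta^{\langle\iota_I^*(\xi),v\rangle}$ would vary with $I$, destroying the clean quasi-periodicity. A secondary technicality, needing care especially when $\gcd(k,N)>1$, is the arithmetic identification of the character-triviality condition $ku\in NL_\V^*$ with the divisibility statement $\xi\in NH_T^2(\Delta,\V)$ that appears in the hypothesis of the theorem.
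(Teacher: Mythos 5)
Your rigidity (constancy) argument is a genuine departure from the paper and, modulo the elided rearrangement/convergence checks, it is a sound route. The paper proves constancy via Liu's strategy: Lemma~\ref{lemm:hatA} gives the modular transform, Lemma~\ref{lemm:hatnopole} rules out real poles, and then ellipticity of $\phist$ with respect to $\Z\cdot N\tau\oplus\Z$ forces no poles at all, hence constancy. Your Fourier-mode recursion bypasses the $SL_2(\Z)$-transform and the pole analysis entirely, reducing rigidity to the downward iteration of $a_{\mu,t-\langle\mu,v\rangle}=c(v)\,a_{\mu,t}$ together with $a_{\mu,s}=0$ for $s<0$ and the fact that $L_\V$ has full rank. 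That is cleaner and more elementary than the paper's argument for the first half of the theorem.

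However, there is a genuine gap in your vanishing argument. You derive quasi-periodicity by applying \eqref{eq:transformula2} to each $\phist$-factor, which forces $m=-\langle u_i^I,v\rangle\in\Z$ and hence $v\in\bigcap_I L_{I,\V}=L_\V$. The resulting character is then a character of $L_\V$, and your triviality condition is $\tfrac{k}{N}u\in L_\V^*$. But $\xi\notin NH_T^2(\Delta,\V)$ is the assertion that some $d_{i_0}\equiv\langle u,v_{i_0}\rangle\not\equiv 0\bmod N\Z$, and the witnessing vector $v_{i_0}$ lies in $L_{I,\V}$, not in $L_\V$ in general. Thus $\xi\notin NH_T^2(\Delta,\V)$ does \emph{not} force $\tfrac{k}{N}u\notin L_\V^*$; the two conditions are genuinely inequivalent. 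For a concrete example, take the fan of $\Proj^2$ with $v_1=(2,0)$, $v_2=(0,1)$, $v_3=(-1,-1)$, so that $L_\V=2\Z\times 2\Z$ and $L_\V^*=\tfrac12\Z\times\tfrac12\Z$; put $N=4$, $\eta=0$, $u=(2,0)$, giving $d_3=-2\notin 4\Z$ (so $\xi\notin 4H_T^2$) while $u\in 4L_\V^*=2\Z^2$, so your character on $L_\V$ is trivial and the recursion at $\mu=0$ yields no information. The flaw traces precisely to deriving quasi-periodicity only for $v\in L_\V$. The paper effectively obtains $\hatvarvst(z+\tau)=\zeta^{h(v)}\hatvarvst(z)$ for every generic $v\in L$ (so that Lemma~\ref{lemm:hv}, which only produces $v\in L_{I_0,\V}\subset L$, suffices); this can be seen not through the $\phist$ translation law but by \emph{reindexing the $h_1$-sum}: the shift $w\mapsto w+\tau v$ replaces $v(h_1)$ by $v(h_1)-v$, which is a representative of $h_1-\bar v$ where $\bar v$ is the image of $v$ in $H_I=L/L_{I,\V}$; re-summing over $H_I$ and using that $\iota_I^*(\eta)\in L^*$ (so $\zeta^{N\langle\iota_I^*(\eta),v\rangle}=1$ for $v\in L$, not just $v\in L_\V$) produces the same $I$-independent factor $\zeta^{\langle u,v\rangle}$, now valid for all $v\in L$. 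You flag the arithmetic identification as a ``secondary technicality,'' but it is not secondary: as the example shows, your version of the triviality condition is strictly weaker than what the hypothesis provides, so the vanishing conclusion does not follow from your recursion as written. To repair the proof, replace the $\phist$-transformation route by the reindexing route so that your recursion holds for every $v\in L$, and then Lemma~\ref{lemm:hv} supplies the $v$ you need.
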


\begin{theo}\label{theo:varrigid}
Let $(\Delta,\V,\xi)$ be a triple of complete simplicial multi-fan
in a lattice $L$ of rank $n$, a set of generating edge vectors and
an element of $H_T^2(\Delta,\V)\otimes\Q$. If
$\xi$ satisfies the equality $\xi=u$ with non-zero
$u\in L_\V^*\otimes \Q$, then
$\hatvarst(\Delta,\V,\xi;\tau,\sigma)$ constantly vanishes.
\end{theo}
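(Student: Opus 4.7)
The plan is to reduce Theorem~\ref{theo:varrigid} to the vanishing half of Theorem~\ref{theo:hatrigid} by exploiting the freedom to choose the integer $N$ in \eqref{eq:xiNdivide}.

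First, since $\xi=u$ lies in $L_\V^*\otimes\Q$, the trivial decomposition $\xi=N\cdot 0+u$ shows that \eqref{eq:xiNdivide} is satisfied for every integer $N\ge 2$, with $\eta=0$ (which is tautologically $T$-Cartier). Theorem~\ref{theo:hatrigid} therefore gives rigidity of $\hatvarst(\Delta,\V,\xi;\tau,k/N)$ as a function of $w$ for every such $N$ and every $1<k<N$. To promote rigidity to actual vanishing I need $\xi\notin NH_T^2(\Delta,\V)$, which by unwinding the definition amounts to exhibiting some $i\in\sigmone$ with $\langle u,v_i\rangle/N\notin\Z$. Since $\V$ is generating and $u\ne 0$, the values $\{\langle u,v_i\rangle\}_{i\in I}$ cannot all vanish for any top $I\in\sigmn$ (as $\{v_i\}_{i\in I}$ spans $L\otimes\Q$), so some $\langle u,v_{i_0}\rangle$ is a non-zero rational; for $N$ larger than $|\langle u,v_{i_0}\rangle|$ the required non-divisibility clearly holds. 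The vanishing half of Theorem~\ref{theo:hatrigid} then yields
\[ \hatvarst(\Delta,\V,\xi;w,\tau,k/N)\equiv 0\quad\text{for all } 1<k<N \text{ and all sufficiently large } N. \]

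The last step is to upgrade this pointwise-in-$\sigma$ vanishing to identical vanishing. For fixed generic $w$ and $\tau$, formula~\eqref{eq:hatvar} displays $\hatvarst(\Delta,\V,\xi;w,\tau,\sigma)$ as a meromorphic function of $\sigma\in\C$: the factors $\zeta^{d_i\langle u_i^I,v(h_1)\rangle}$ are entire in $\sigma$, while each $\phist$ is meromorphic in $\sigma$ through $\Phi$, with poles confined to the discrete sets $(\Z+\tau\Z)/d_i$. The zero set produced above contains $\{k/N:N\gg 1,\ 1<k<N\}$, which is dense in $(0,1)$ and so has accumulation points in $\C$; the identity principle then forces $\hatvarst(\Delta,\V,\xi;w,\tau,\sigma)\equiv 0$ for all $\sigma$, hence as an element of $(R(T)\otimes\C)[[q]]$.

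The argument is essentially a direct appeal to Theorem~\ref{theo:hatrigid}, and the main (mild) obstacle is the final identity-principle step: one has to check that the function $\sigma\mapsto\hatvarst(\Delta,\V,\xi;w,\tau,\sigma)$ is genuinely meromorphic with a well-controlled pole locus disjoint from the accumulating zero set. Once this is granted, the rest reduces to two small combinatorial observations, namely the trivial decomposition $\xi=N\cdot 0+u$ and the existence of some non-zero pairing $\langle u,v_{i_0}\rangle$, both immediate from the hypotheses.
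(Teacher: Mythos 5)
Your proposal is correct and follows essentially the same route the paper takes: apply Theorem~\ref{theo:hatrigid} with the trivial decomposition $\eta=0$, observe that $u\neq 0$ together with the spanning property of $\{v_i\}_{i\in I}$ for any $I\in\sigmn$ produces some non-zero pairing $\langle u,v_{i_0}\rangle$, hence $\xi\notin NH_T^2(\Delta,\V)$ for infinitely many $N$, and conclude via the accumulating zero set $\{k/N\}$. The paper's closing sentence (``Since this is true for infinite many integers $N$\ldots'') is exactly the identity-principle step you spell out, so you have simply made explicit what the paper leaves implicit.
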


The rest of the section is devoted to the proofs of Theorem
\ref{theo:hatrigid} and Theorem \ref{theo:varrigid}.

Let $v\in L_\V$ be a generic vector.
We put $H_I=L/L_{I,\V}$ for $I\in \sigmn$ as before. For $A\in SL_2(\Z)$
we set
\[ (\hatvarvst)^A(\Delta,\V,\xi;z,\tau,\sigma)=
 \hatvarvst(\Delta,\V,\xi;A(z,\tau),\sigma). \]

\begin{lemm}\label{lemm:hatA}
Assume that
$\xi$ satisfies \eqref{eq:xiNdivide}.
Then $(\hatvarvst)^A(\Delta,\V,\xi;z,\tau,\sigma)$ with
$\sigma=\frac{k}{N},\ 0<k<N,$
has the following expression.
\begin{equation}\label{eq:hatA}
%font size decreased to fit into the line (Taras)
\small
 \begin{split}
 (\hatvarvst)^A(&\Delta,\V,\xi;z,\tau,\sigma)=
 e^{-2\pi\img (c\l u,v\r z\sigma)}
 \sum_{I\in \sigmn}\frac{w(I)}{|H_I|}\sum_{(h_1,h_2)\in H_I\times H_I}
 e^{-2\pi\img\l \iota_I^*(\eta), ckzv\r} \\
 &\quad \prod_{i\in I}e^{2\pi\img\l d_iu_i^I,(c\tau+d)\sigma v(h_1)\r}
 \phist(-\l u_i^I, zv-\tau v(h_1)+v(h_2)\r,\tau,(c\tau+d)d_i\sigma).
 \end{split}
\end{equation}
\end{lemm}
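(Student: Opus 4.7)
The plan is to apply the Jacobi transformation formula \eqref{eq:transformula1} termwise to each $\phist$-factor in \eqref{eq:hatvarv}, reindex the inner sum over $(h_1,h_2)\in H_I\times H_I$ via the natural $SL_2(\Z)$-action, and then simplify the surviving exponential prefactors using the hypothesis $\xi=N\eta+u$ with $\eta$ being $T$-Cartier. Substituting $(z,\tau)\mapsto A(z,\tau)$ in \eqref{eq:hatvarv}, each $\phist$-factor becomes $\phist\bigl(A(\tilde z_i,\tau),d_i\sigma\bigr)$, where
\[
\tilde z_i=\l u_i^I,-zv+(a\tau+b)v(h_1)-(c\tau+d)v(h_2)\r,
\]
and \eqref{eq:transformula1} rewrites this as $e^{2\pi\img c\tilde z_i d_i\sigma}\phist(\tilde z_i,\tau,(c\tau+d)d_i\sigma)$.

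Next I would reindex via the bijection $(h_1,h_2)\mapsto(dh_1+ch_2,\,bh_1+ah_2)$ of $H_I\times H_I$, which is an automorphism since $ad-bc=1$. A direct expansion of $(a\tau+b)\bigl(dv(h_1)+cv(h_2)\bigr)-(c\tau+d)\bigl(bv(h_1)+av(h_2)\bigr)$ yields $\tau v(h_1)-v(h_2)$, so under this reindexing $\tilde z_i$ collapses to the clean form $\l u_i^I,-zv+\tau v(h_1)-v(h_2)\r$, matching the $\phist$-argument in \eqref{eq:hatA}. Simultaneously the original factor $\prod_i\zeta^{d_i\l u_i^I,v(h_1)\r}$ becomes $\prod_i\zeta^{d_i\l u_i^I,dv(h_1)+cv(h_2)\r}$, while the transformation prefactor $\prod_i e^{2\pi\img c\tilde z_i d_i\sigma}$, expanded in the new variables, contributes exponentials in $v(h_1)$, $v(h_2)$ and $v$.

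Summing over $i\in I$ and using $\sum_i d_iu_i^I=\iota_I^*(\xi)$: the $v(h_2)$-contributions cancel ($+c\sigma\l\iota_I^*(\xi),v(h_2)\r$ from the reindexed $\zeta$-factor against $-c\sigma\l\iota_I^*(\xi),v(h_2)\r$ from the prefactor); the $v(h_1)$-contributions combine to $e^{2\pi\img(c\tau+d)\sigma\l\iota_I^*(\xi),v(h_1)\r}=\prod_i e^{2\pi\img\l d_iu_i^I,(c\tau+d)\sigma v(h_1)\r}$; and the surviving $v$-dependent factor is $e^{-2\pi\img c\sigma z\l\iota_I^*(\xi),v\r}$. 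Substituting $\xi=N\eta+u$ with $\sigma=k/N$ splits this as $e^{-2\pi\img\l\iota_I^*(\eta),ckzv\r}\cdot e^{-2\pi\img c\l u,v\r z\sigma}$; the second factor is independent of $I$ and $(h_1,h_2)$ and pulls outside both sums, yielding exactly \eqref{eq:hatA}.

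The principal bookkeeping subtlety is the clean cancellation of the $v(h_2)$-contributions, which rests on the precise matching between the $(1,2)$-entry $c$ of the reindexing matrix and the coefficient $c$ appearing in the prefactor $e^{2\pi\img c\tilde z_i d_i\sigma}$ from \eqref{eq:transformula1}. This is not a coincidence but reflects the modular covariance of the twisted sectors $(h_1,h_2)$ under $SL_2(\Z)$; the $T$-Cartier hypothesis on $\eta$ enters only at the very end to ensure that the integrality of $\l\iota_I^*(\eta),v(h_2)\r$ is not actually needed in the argument (both $v(h_2)$-terms vanish before the split), and to make the prefactor $e^{-2\pi\img\l\iota_I^*(\eta),ckzv\r}$ well defined as a character value.
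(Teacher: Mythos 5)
Your proof is correct and follows essentially the same route as the paper's: apply the transformation formula \eqref{eq:transformula1} termwise, reindex $H_I\times H_I$ by the $SL_2(\Z)$-induced bijection, and collect the exponential prefactors using $\sum_i d_iu_i^I=\iota_I^*(\xi)$ together with $\xi=N\eta+u$ and $\sigma=k/N$. The paper presents the reindexing via $\rho(h_1,h_2)=(ah_1-ch_2,-bh_1+dh_2)$ and sums over the image; you substitute its inverse $(h_1,h_2)\mapsto(dh_1+ch_2,bh_1+ah_2)$ directly, which is the same bookkeeping in the opposite direction. One small inaccuracy in your closing aside: the $T$-Cartier hypothesis on $\eta$ is not part of the hypotheses of Lemma \ref{lemm:hatA} at all (it first appears in Lemma \ref{lemm:hatnopole}, where integrality of $\l\iota_I^*(\eta),v(h_2)\r$ really is used), and the factor $e^{-2\pi\img\l\iota_I^*(\eta),ckzv\r}$ is a perfectly well-defined complex number without it; but this remark is peripheral and does not affect the validity of your argument.
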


\begin{proof}
By definition we have
\begin{equation*}\label{eq:hatA2}
%font size decreased to fit into the line (Taras)
\small
\begin{split}
 (\hatvarvst)&^A(\Delta,\V,\xi;z,\tau,\sigma) \\
 &= \sum_{I\in\sigmn}
   \frac{w(I)}{|H_I|}
   \sum_{h_1\in H_I\atop h_2\in H_I}
  \prod_{i\in I}\zeta^{\l d_iu_i^I, v(h_1)\r}
 \phist\bigl(-\l u_i^I, {\textstyle\frac{zv-(a\tau+b)v(h_1)+(c\tau+d)v(h_2)}{c\tau+d}}\r,
 A\tau, d_i\sigma\bigr).
\end{split}
\end{equation*}

Using \eqref{eq:transformula1}
we get
\begin{equation}\label{eq:brA2}
 \begin{split}
 \prod_{i\in I}\zeta^{\l d_iu_i^I, v(h_1)\r}&
 \phist(-\l u_i^I, \frac{zv-(a\tau+b)v(h_1)+(c\tau+d)v(h_2)}{c\tau+d}\r,
 A\tau, d_i\sigma) \\
 =&\zeta^{\l u^I(\xi),v(h_1)\r}
 e^{2\pi\img\left(c\l u^I(\xi),
 -zv+(a\tau+b)v(h_1)-(c\tau+d)v(h_2)\r \sigma\right)} \\
 &\ \prod_{i\in I}\phist(-\l\u,zv-(a\tau+b)v(h_1)+(c\tau+d)v(h_2)\r,
 \tau,(c\tau+d)d_i\sigma).
 \end{split}
\end{equation}

We have
\[ c\left((a\tau+b)v(h_1)-(c\tau+d)v(h_2)\right)=
 -v(h_1)+(c\tau+d)(av(h_1)-cv(h_2)). \]
Hence
\begin{equation}\label{eq:exp1}
 \begin{split}
 \zeta^{\l u^I(\xi),v(h_1)\r}&
 e^{2\pi\img\left(c\l u^I(\xi),
 -zv+(a\tau+b)v(h_1)-(c\tau+d)v(h_2)\r \sigma\right)} \\
 &=e^{2\pi\img\l u^I(\xi),-zcv\r\sigma}
 e^{2\pi\img\l u^I(\xi),(c\tau+d)(av(h_1)-cv(h_2))\r\sigma}.
 \end{split}
\end{equation}

Since $\xi$ satisfies \eqref{eq:xiNdivide}, we get
\begin{equation}\label{eq:4-2}
 e^{2\pi\img\l u^I(\xi), zv\r\sigma}=
 e^{2\pi\img\l u,zv\r\sigma}e^{2\pi\img k\l\iota_I^*(\eta),zv\r}.
\end{equation}

Let $\rho:H_I\times H_I\to H_I\times H_I$ be the map defined by
\[ \rho(h_1,h_2)=(\barh_1,\barh_2)=(ah_1-ch_2, -bh_1+dh_2). \]
$\rho$ is bijective and its inverse is given by
\[ \rho^{-1}(\barh_1,\barh_2)=(d\barh_1+c\barh_2,b\barh_1+a\barh_2). \]
Then $av(h_1)-cv(h_2)$ and $-bv(h_1)+dv(h_2)$ are representatives of
$\barh_1$ and $\barh_2$ which we shall denote by $v(\barh_1)$ and
$v(\barh_2)$ respectively.

In view of \eqref{eq:exp1} and \eqref{eq:4-2},
the right hand side of \eqref{eq:brA2} is equal to
\begin{equation}\label{eq:brA3}
 \begin{split}
 &e^{-2\pi\img\left(c\l u,v\r z\sigma\right)}
 e^{-2\pi\img\l\iota_I^*(\eta),ckzv\r}\cdot \\
 & \prod_{i\in I}\quad e^{2\pi\img\l d_iu_i^I,(c\tau+d)\sigma v(\barh_1)\r}
 \phist(-\l d_i\u,zv-v(\barh_1)\tau+v(\barh_2)\r,
 \tau,(c\tau+d)d_i\sigma).
 \end{split}
\end{equation}
Summing up over $(h_1,h_2)$ is the same as summing up over
$(\barh_1,\barh_2)$. Hence from \eqref{eq:brA3} we get
\eqref{eq:hatA} with $h_i$ replaced by $\barh_i$ for $i=1,2$.
This proves Lemma \ref{lemm:hatA}.
\end{proof}

\begin{lemm}\label{lemm:hatnopole}
Assume that $\xi$ satisfies \eqref{eq:xiNdivide} with $\eta$ $T$-Cartier.
Then, for fixed $\tau$, the meromorphic
function $(\hatvarvst)^A(\Delta,\V;z,\tau,\sigma)$ in
$z$ with $\sigma=\frac{k}{N},\ 0<k<N$, has no poles at $z\in \R$.
\end{lemm}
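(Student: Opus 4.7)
The plan is to analyze each $\phist$-factor in the explicit formula \eqref{eq:hatA} and show that all candidate real poles cancel in the sum. For fixed $\tau$ with $\operatorname{Im}\tau\neq 0$ and $\sigma_1=(c\tau+d)k/N\notin\Z+\Z\tau$, the function $\phist(w,\tau,\sigma_1)$ has only simple poles, located precisely at $w\in\Z+\Z\tau$. In the factor $\phist(-\l u_i^I, zv-\tau v(h_1)+v(h_2)\r,\tau,(c\tau+d)d_i\sigma)$ the argument has $\tau$-coefficient $\l u_i^I,v(h_1)\r$ and constant part $-z\l u_i^I,v\r-\l u_i^I,v(h_2)\r$. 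For real $z$, a pole therefore requires $\l u_i^I,v(h_1)\r\in\Z$; taking the canonical representative of $h_1\in H_I$ this forces $f_{h_1,i}=0$, equivalently $h_1$ lies in the subgroup $H_K\subset H_I$ corresponding to the codimension-one face $K=I\setminus\{i\}$.

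I would then fix the data $(K,h_1,z_0)$ with $K\in\Sigma^{(n-1)}$, $h_1\in H_K$, and a candidate pole location $z_0\in\R$, and sum the residues at $z_0$ coming from all summands of \eqref{eq:hatA} of this type. For each $I=K\cup\{i_0\}\in\sigmn$ and each $h_2\in H_I$ satisfying $z_0\l u_{i_0}^I,v\r+\l u_{i_0}^I,v(h_2)\r\in\Z$, the pole-producing factor (after removing the trivial $\tau$-part via \eqref{eq:transformula2}) contributes the residue $1/(2\pi\img\l u_{i_0}^I,v\r)$, while the remaining factors are evaluated at $z=z_0$. Pulling out the $z$-dependent exponential twist $e^{-2\pi\img ckz_0\l \iota_I^*(\eta),v\r}$ and using the $T$-Cartier hypothesis on $\eta$ (which, via the compatibility \eqref{eq:pistarimage}, makes these twists fit together across all $I\supset K$), the total residue can be identified as a push-forward expression for the projected multi-fan $\Delta_K$ in the rank-one lattice $L^K$.

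The final step is to show that this one-dimensional push-forward vanishes. Here $\xi\equiv u\pmod{NH_T^2(\Delta,\V)}$ and $\sigma=k/N$ with $0<k<N$ combine to produce a nontrivial $N$th-root-of-unity character in the residue sum, and an elementary character-orthogonality argument (or, equivalently, another application of the transformation law \eqref{eq:transformula2} on the projected multi-fan side) forces the sum to vanish. The main obstacle in this plan is the combinatorial bookkeeping for the reduction to the projected multi-fan: one must carefully match the orders $|H_I|$, the weights $w(I)$, and the Jacobian factor $\l u_{i_0}^I,v\r$ so that the residue sum assembles into a clean elliptic-genus evaluation for $\Delta_K$, and only after this identification does the cancellation coming from $\sigma=k/N$ become manifest.
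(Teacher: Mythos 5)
Your plan differs substantially from the paper's, which does not compute residues directly: the paper rewrites $(\hatvarvst)^A$ as a sum over $K\in\Sigma$ and $h_1\in\hat H_K$, expands each piece as a $q$-power series, shows (via the $T$-Cartier hypothesis and the integrality statement from Corollary 2.4 of \cite{HM2}, as in the proof of Theorem~\ref{theo:laurent}) that the coefficients lie in $R(S^1)\otimes\C$, and then invokes the Lemma in \S 7 of \cite{Hir} (also Lemma 4.3 of \cite{DLM}) to conclude holomorphy on $\R$. Yours is a local residue-cancellation approach, which would be interesting if it worked, but as written it has two genuine gaps.

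First, the proposed vanishing mechanism cannot be right. You attribute the cancellation to a ``nontrivial $N$th-root-of-unity character'' produced by $\sigma=k/N$ together with $\xi\equiv u\pmod{NH_T^2(\Delta,\V)}$. But Lemma~\ref{lemm:hatnopole} must hold with \emph{no} nontriviality hypothesis on $\xi$ modulo $N$; in particular when $\xi\in NH_T^2(\Delta,\V)$ (so $u=0$, the character is trivial) the first part of Theorem~\ref{theo:hatrigid} still asserts that the genus is a nonzero constant in $z$, hence in particular has no poles at $z\in\R$. So the residue sums at real $z_0$ must vanish even when there is no character to appeal to. Whatever forces the cancellation, it must be the combinatorial pre-completeness of the projected multi-fans $\Delta_K$ (the $w(I)$-weighted counting, \eqref{eq:Sv}), not an $N$th-root-of-unity orthogonality. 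The $T$-Cartier hypothesis on $\eta$ is needed precisely to make the exponential twists $e^{-2\pi\img\l\iota_I^*(\eta),ckzv\r}$ glue consistently in that combinatorial identification, as the paper uses it inside the $q$-expansion; you mention this gluing but do not actually use it to produce the cancellation.

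Second, your pole bookkeeping is incomplete. You observe that a real pole in the $i$-th factor forces $\l u_i^I,v(h_1)\r\in\Z$, i.e.\ $f_{h_1,i}=0$, and then pass to ``$K=I\setminus\{i\}\in\Sigma^{(n-1)}$'' and simple poles. But $h_1\in\hat H_K$ may have $f_{h_1,i}=0$ for \emph{several} $i\in I$ (exactly those in $I\setminus K$, with $K$ possibly of any codimension), and then several $\phist$-factors can acquire poles simultaneously at the same $z_0\in\R$, producing higher-order poles. A correct residue argument would have to handle these higher-order coincidences, or first argue that generic choices eliminate them; the plan as written only covers the codimension-one, simple-pole case. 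The paper's $q$-expansion route avoids this altogether because integrality of the Laurent coefficients is a statement that does not see the pole structure term by term.
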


\begin{proof}
The expression \eqref{eq:hatA} in Lemma \ref{lemm:hatA} of the
function $(\hatvarvst)^A(\Delta,\V;z,\tau,\sigma)$ can be rewritten
in the following form as in the case of \eqref{eq:hatvarv2}.
\begin{equation*}
%font size decreased to fit into the line (Taras)
\small
 \begin{split}
 &(\hatvarvst)^A(\Delta,\V,\xi;z,\tau,\sigma) \\
  &=e^{-2\pi\img(c\l u,v\r z\sigma)}
   \sum_{k=0}^n\sum_{K\in\sigmk,h_1\in \hat{H}_K}
   e^{2\pi\img\l u^K(\xi), (c\tau+d)\sigma v(h_1)\r}\cdot \\
   &\sum_{I\in \Sigma_K^{(n-k)}}\frac{w(I)}{|H_I|}
   e^{-2\pi\img\l \iota_I^*(\eta), ckzv\r}\!\!
   \sum_{h_2\in H_I}
    \prod_{i\in I}
    \phist\bigl(-\l u_i^I, zv-\tau v(h_1)+v(h_2)\r, \tau, (c\tau+d)d_i\sigma\bigr).
\end{split}
\end{equation*}
Hence, in order to prove Lemma \ref{lemm:hatnopole}, it is
sufficient to prove that
\[
%font size decreased to fit into the line (Taras)
\small
\begin{split}
\sum_{I\in \Sigma_K^{(n-k)}}\frac{w(I)}{|H_I|}
   e^{-2\pi\img\l \iota_I^*(\eta), ckzv\r}\!\!
   \sum_{h_2\in H_I}
    \prod_{i\in I}
    \phist\bigl(-\l u_i^I, zv-\tau v(h_1)+v(h_2)\r, \tau,
    (c\tau+d)d_i\sigma\bigr),
\end{split}
\]
or, replacing $ck$ by $m$ and $(c\tau+d)\sigma$ by $\sigma$,
\begin{equation}\label{eq:brvarv3}
\sum_{I\in \Sigma_K^{(n-k)}}\frac{w(I)}{|H_I|}\sum_{h_2\in H_I}
   e^{-2\pi\img\l \iota_I^*(\eta), mzv\r}\prod_{i\in I}
    \phist(-\l u_i^I, zv-\tau v(h_1)+v(h_2)\r, \tau,d_i\sigma)
\end{equation}
has no poles at $z\in \R$
for any fixed $K\in \sigmk$ and $h_1\in \hatH_K$.

Note that
$\l \iota_I^*(\eta),v(h_2)\r$ is an integer for all $I\in \sigmn$
since $\eta$ is $T$-Cartier. Therefore
\[ e^{-2\pi\img\l \iota_I^*(\eta), mzv\r}=
 e^{-2\pi\img\l \iota_K^*(\eta),m\tau v(h_1)\r}
 e^{-2\pi\img\l \iota_I^*(\eta),m(zv-\tau v(h_1)+v(h_2))\r}. \]
Note further that
\[ e^{-2\pi\img\l \iota_I^*(\eta),m(zv-\tau_1 v(h_1)+v(h_2))\r}=
 \chi_I(\iota_I^*(m\eta), h_2)e^{-2\pi\img \l \iota_I^*(m\eta),zv\r}
 q_1^{\l\iota_I^*(m\eta),v (h_1)\r}. \]
By a similar argument to the proof of Theorem \ref{theo:laurent},
we see that \eqref{eq:brvarv3} can be expanded
in the form
\[ e^{-2\pi\img\l \iota_K^*(\eta),m\tau v(h_1)\r}
 \sum_{s=0}^\infty (\hatvarst)^A_{K,h_1,s}(z) q^s, \]
where $(\hatvarst)^A_{K,h_1,s}(z)$ belongs to $R(S^1)\otimes\C$.
From this we can conclude that \eqref{eq:brvarv3} has no poles at $z\in \R$.
We refer to Lemma in Section 7 of \cite{Hir} for details.
See also Lemma 4.3 of
\cite{DLM} whose argument can be applied to prove that the function
$(\hatvarvst)^A(\Delta,\V,\xi;z,\tau,\sigma)$
with $\sigma=\frac{k}{N},\ 0<k<N$, is holomorphic on
$\R\times \mathcal{H}$ as a function of $(z,\tau)$. This finishes
the proof of Lemma \ref{lemm:hatnopole}.
\end{proof}

We now proceed to the proof of Theorem \ref{theo:hatrigid}. We follow
\cite{L} for the idea of proof.  We first
show that $\hatvarvst(\Delta,\V,\xi;z,\tau,\sigma)$ is a constant as
a function of $z$.

We regard
$\hatvarvst(\Delta,\V,\xi;z,\tau,\sigma)$ as a meromorphic function of $z$.
By the transformation law \eqref{eq:transformula1} $\phist(z,\tau,\sigma)$
is an elliptic function in $z$ with respect to the lattice
$\Z\cdot N\tau\oplus \Z$ for $\sigma=\frac{k}{N}$ with $0<k<N$.
Hence $\hatvarvst(\Delta,\V,\xi;z,\tau,\sigma)$ with
$\sigma=\frac{k}{N},\ 0<k<N,$ is also
an elliptic function in $z$. Thus, in order to show
that $\hatvarvst(\Delta,\V,\xi;z,\tau,\sigma)$ is a
constant function it suffices to show that it does not have a pole.

Assume that $z$ is a pole. Then $1-t^mq^r\alpha =0$ for some
integer $m\not=0$, some rational number $r$ and a root of unity
$\alpha$. Consequently there are integers $m_1\not=0$ and $k_1$
such that $ m_1z+k_1\tau \in\R$. Then there is an element $A=
\begin{pmatrix} a & b \\ c & d \end{pmatrix} \in SL_2(\Z)$ such
that
\[ \frac{z}{c\tau+d}\in \R. \]
Since
\[
%font size decreased to fit into the line (Taras)
\small
\begin{split}
 \hatvarvst(\Delta,\V,\xi;z,\tau,\sigma)=
 \hatvarvst(\Delta,\V,\xi;A^{-1}({\textstyle\frac{z}{c\tau+d}},A\tau),\sigma)=
 (\hatvarvst)^{A^{-1}}(\Delta,\V,\xi;{\textstyle\frac{z}{c\tau+d}},A\tau,\sigma),
\end{split}
\]
the function
$(\hatvarvst)^{A^{-1}}(\Delta,\V,\xi;w,A\tau,\sigma)$ must have a
pole $w=\frac{z}{c\tau+d}\in \R$. But this contradicts Lemma
\ref{lemm:hatnopole}. This contradiction proves that
$\hatvarvst(\Delta,\V,\xi;z,\tau,\sigma)$ can not have a pole.

Since $\hatvarvst(\Delta,\V,\xi;z,\tau,\sigma)$ is a constant function in $z$
for every generic vector $v\in L$,
$\hatvarst(\Delta,\V,\xi;\tau,\sigma)$ is constant as a function
on $T$. This proves the first part of Theorem \ref{theo:hatrigid}.

To prove the second part note that
\[ \hatvarst(\Delta,\V,\xi;\tau,\sigma)=
 \hatvarvst(\Delta,\V,\xi;z,\tau,\sigma) \]
for any $v$ as constants.
On the other hand, using \eqref{eq:transformula1} and the fact that
$\l u^I(\xi),v\r \equiv h(v) \bmod N\Z$ for any $I\in \sigmn$, we have
\begin{equation}\label{eq:hv}
\begin {split}
 \hatvarst(\Delta,\V,\xi;\tau,\sigma)=
 \hatvarvst(\Delta,\V,\xi;z+\tau,\tau,\sigma)&=
 \zeta^{h(v)}\hatvarvst(\Delta,\V,\xi;z,\tau,\sigma) \\
 &=\zeta^{h(v)}\hatvarst(\Delta,\V,\xi;\tau,\sigma).
\end{split}
\end{equation}
We choose a generic vector $v$ such that $h(v)\not\equiv0$ in
$\Q/N\Z$, which is
possible by Lemma \ref{lemm:hv} below because of the assumption
$\xi\not\in NH_T^2(\Delta,\V)$. Then
$\zeta^{h(v)}$ is not equal to $1$. Hence
\eqref{eq:hv} implies that the constant $\hatvarst(\Delta,\V,\xi;\tau,\sigma)$
must vanish. This finishes the proof of Theorem \ref{theo:hatrigid}.

\begin{lemm}\label{lemm:hv}
Assume that $d_i$ does not belong to $N\Z$ for some $i$ and $\xi$
satisfies the condition \eqref{eq:xiNdivide}.
Then there exists
a generic vector $v\in L$ such that $h(v)$ is non-zero in $\Q/N\Z$.
\end{lemm}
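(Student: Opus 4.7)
The plan is to extract from the hypothesis $d_{i_0}\notin N\Z$ a specific edge vector on which $u$ takes a value outside $N\Z$, and then to perturb it within $L$ to a generic lattice point without altering that value modulo $N\Z$.

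First I would compare coefficients in the defining relation $\xi=N\eta+u$ inside $H_T^2(\Delta,\V)\otimes\Q$. Writing $\eta=\sum_i c_i x_i$ with $c_i\in\Z$ and using the inclusion $L_\V^*\otimes\Q\hookrightarrow H_T^2(\Delta,\V)\otimes\Q$ given by $u\mapsto\sum_i\l u,v_i\r x_i$, the identity reads $d_i=Nc_i+\l u,v_i\r$ for every $i$. Hence $\l u,v_{i_0}\r\equiv d_{i_0}\pmod{N\Z}$, which is nonzero by hypothesis; in particular $u$ is nonzero in $L^*\otimes\Q$.

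Since $u$ is a nonzero rational linear functional on $L$, its kernel $K=\ker(u\colon L\to\Q)$ is a sublattice of $L$ of rank $n-1$. On the whole coset $v_{i_0}+K\subset L$ the pairing $\l u,\cdot\r$ is constantly equal to $\l u,v_{i_0}\r\notin N\Z$, so every element of that coset satisfies $h(v)\not\equiv 0$; the only remaining issue is to find one that is also generic.

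Because $\l u,v_{i_0}\r\neq 0$, the point $v_{i_0}$ is not in $K_\R$, so the affine hyperplane $v_{i_0}+K_\R$ does not pass through the origin and therefore is not contained in any proper linear subspace of $L_\R$. For each cone $C(I)$ with $I\in\sigmk$ and $k<n$, the intersection of its linear span $W_I$ with $v_{i_0}+K_\R$ is thus either empty or a proper affine subspace of $v_{i_0}+K_\R$, of real dimension at most $n-2$. A lattice point count in balls of radius $R$ inside $v_{i_0}+K_\R$---order $R^{n-1}$ points of $v_{i_0}+K$ against $O(R^{n-2})$ points in each of the finitely many $W_I\cap(v_{i_0}+K)$---then yields, for $R$ sufficiently large, some $v\in v_{i_0}+K$ lying outside every $W_I$. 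Such $v$ is a generic lattice vector with $h(v)\neq 0$ in $\Q/N\Z$. The whole argument is routine; the one point needing attention is the off-origin condition $u(v_{i_0})\neq 0$, which is precisely what forces the obstructions to genericity to meet $v_{i_0}+K_\R$ only in proper affine subsets rather than filling it up.
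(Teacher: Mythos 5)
Your proof is correct but takes a genuinely different route from the paper's. The paper fixes a single maximal simplex $I\ni i_0$, uses that $\{u_i^I\}$ is a basis of $L_{I,\V}^*$, and directly exhibits $v=\sum_{i\in I}a_iv_i\in L_{I,\V}$ with nonzero integer coefficients $a_i$ chosen so that $\l u^I(\xi),v\r=\sum_i a_id_i\notin N\Z$; the whole argument lives inside the single top cone $C(I)$. You instead compare coefficients in $\xi=N\eta+u$ (using that $H_T^2(\Delta,\V)=\bigoplus_i\Z x_i$, so the identity $d_i=Nc_i+\l u,v_i\r$ is legitimate) to get $\l u,v_{i_0}\r\equiv d_{i_0}\not\equiv 0$, which in particular forces $u\neq 0$; you then work globally with the affine coset $v_{i_0}+\ker(u)\subset L$, on which $h$ is constant and nonzero, and settle genericity by a lattice-point count against the finitely many proper linear spans $W_J$. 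This second step is the main place where you genuinely improve on the write-up: the paper only arranges $a_i\neq 0$, which keeps $v$ off the faces of $C(I)$ but does not by itself exclude the spans $W_J$ for $J\not\subset I$, so the asserted genericity is left implicit there, whereas your hyperplane-avoidance argument closes that off cleanly. (Both arguments, yours and the paper's, tacitly invoke the remark preceding the lemma that $h(v)=\l u,v\r\bmod N\Z$ is defined on all of $L$ when $\eta$ is $T$-Cartier — the hypothesis under which the lemma is actually applied in Theorem \ref{theo:hatrigid} — since neither of you restricts $v$ to $L_\V$.)
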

\begin{proof}
Fix an element $I\in \sigmn$ containing $i_0$ with $d_{i_0}\not\in N\Z$.
Since the $\u$ form a basis of
$L_{I,\V}^*$ and $d_{i_0}\not\in \Q/N\Z$, there is a
$v=\sum_{i\in I}a_iv_i\in L_{I,\V}\subset L$ with
$a_i\in \Z,\ a_i\not=0$ for all $i\in I$ such that $\l u^I(\xi),v\r=
\sum_{i\in I}a_id_i$ does
not belong to $N\Z$, i.e., $\l u^I(\xi),v\r\not=0$ in $\Q/N\Z$.
Since $\xi$ satisfies the condition \eqref{eq:xiNdivide}, the value
$\l u^I,v\r \in \Q/N\Z$ is independent of $I$ and equal to $h(v)$ as
remarked above.
\end{proof}

For the proof of Theorem \ref{theo:varrigid} we apply
Theorem \ref{theo:hatrigid} with $\eta=0$ and arbitrary $N>1$ such that
$\xi=u$ does not belong to $NL_\V^*$.
One sees that
$\hatvarst(\Delta,\V,\xi;\tau,\sigma)$ vanishies for
$\sigma=\frac{k}{N},\ 0<k<N$. Since this is true for infinite many
integers $N$ we see that $\hatvarst(\Delta,\V,\xi;\tau,\sigma)$ must
be equal to $0$.

%chp5.tex, platex2e

\section{A character formula for orbifold elliptic genus}
In this section we shall give a character formula for
$\hatvar(\Delta,\V,\xi)$. A similar formula was first given by Borisov and
Libgober in \cite{BL1}. The formula for the orbifold elliptic genus of
a multi-fan was given in \cite{HM2}.

Recall that $f_{h,i}$ for $h\in H_J$ and $i\in J$ is given by
$f_{h,i}=\l u_i^J,v(h)\r$ where $v(h)$ satisfies
\[ 0\leq\l u_i^J,v(h)\r<1 \ \text{for all $i\in J$}. \]

\begin{theo}\label{thm:BL2}
Let $(\Delta,\V,\xi)$ be a triple of complete simplicial multi-fan in a lattice $L$ of rank $n$, a set of generating edge vectors and a $\Q$-divisor. Then
\begin{equation}\label{algn:BL}
 \begin{split}
 &\hatvarst(\Delta,\V,\xi) \\
 =&\sum_{u\in L^*}t^{-u}\left(\sum_{k=0}^n
  \sum_{J\in\Sigma^{(k)}, h\in H_J}(-1)^{n-k}
  \deg(\Delta_J)\zeta^{f_{J,h}(\xi)}q^{\l u,v_{J,h}\r}
  \prod_{i\in J}\frac1{1-\zeta^{d_i} q^{\l u,v_i \r}}\right),
 \end{split}
\end{equation}
where $f_{J,h}(\xi)=\sum_{i\in J}d_if_{h,i}$ and
$v_{J,h}=\sum_{i\in J}f_{h,i}v_i$.
\end{theo}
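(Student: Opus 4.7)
The plan is to expand the defining expression \eqref{eq:hatvar} of $\hatvarst(\Delta,\V,\xi)$ into a formal Fourier series in the characters $t^{-u}$, $u \in L^*$, and identify the coefficients with those of the right-hand side of \eqref{algn:BL}. This follows the pattern of the Borisov--Libgober formula \cite{BL1} for Gorenstein toric varieties and its multi-fan extension in \cite{HM2}; the only new ingredient is the $\xi$-dependence, which enters through the prefactors $\zeta^{d_i\l u_i^I, v(h_1)\r}$ and through the replacement of $\sigma$ by $d_i\sigma$ inside each $\phist$-factor.

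First, I would fix $I \in \sigmn$ and $(h_1,h_2) \in H_I \times H_I$ and expand each factor $\phist\bigl(\l u_i^I, -w+\tau v(h_1)-v(h_2)\r, \tau, d_i\sigma\bigr)$ using its infinite-product form, treating it as a formal Laurent series in $t_i := e^{2\pi\img \l u_i^I, w\r}$. The product over $i \in I$ then rewrites as a series
\[
 \sum_{c\in \Z^I} A_c(\tau,\sigma,h_1)\, \chi_I(u_c,h_2)^{-1} q^{\l u_c,v(h_1)\r}\, t^{-u_c}, \qquad u_c = \sum_{i\in I} c_i u_i^I,
\]
for explicit rational coefficients $A_c$. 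Summing over $h_2 \in H_I$ and invoking the orthogonality $\sum_{h_2 \in H_I}\chi_I(u_c,h_2) = |H_I|\cdot\mathbf{1}_{u_c \in L_I^*}$ cancels the prefactor $1/|H_I|$ in \eqref{eq:hatvar} and restricts the exponents to $L_I^* \supset L^*$.

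Next, for each fixed $u \in L^*$ I would regroup the sum $\sum_I \sum_{c:u_c=u}$ by decomposing $I = J \sqcup (I\setminus J)$, where $J$ records the subset of indices $i$ contributing a singular factor $1/(1-\zeta^{d_i} q^{\l u, v_i\r})$ coming from the $c_i=0$ slot of the geometric-series expansion. Telescoping the remaining factors via $1/(1-x) = 1 + x/(1-x)$ produces the sign $(-1)^{|I\setminus J|}$, and summing over those $I\in\sigmn$ with $J \subset I$ with weights $w(I)$ collapses, via the degree formula combined with \eqref{eq:Sv}, to $\deg(\Delta_J)$. The residual sum over $h_1 \in H_I$ descends to a sum over $h \in H_J$ since $v(h_1)$ only enters through its class modulo the complementary directions, and the $\xi$-prefactors assemble into $\zeta^{f_{J,h}(\xi)} q^{\l u, v_{J,h}\r}$.

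The main obstacle will be precisely this combinatorial regrouping: synchronising the two stratifications (by $I \in \sigmn$ on the defining side versus by $J \in \sigmk$ on the character-formula side), reconciling the lattice indices $|H_I|$ against $\deg(\Delta_J)$, and implementing the inclusion--exclusion that converts the cone-interior summation $h\in\hat{H}_J$ implicit in \eqref{eq:hatvarv2} into the unrestricted sum $h\in H_J$ on the right-hand side. Independence of the final expression from the chosen representatives $v(h_1), v(h_2)$ follows from \eqref{eq:transformula2} as in the proof of Theorem \ref{theo:laurent}, and that proof also guarantees that the resulting formal sum lies in $(R(T)\otimes\C)[[q]]$.
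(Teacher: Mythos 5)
Your overall strategy — Fourier-expand the $\phist$-factors, kill the $h_2$-sum by orthogonality, do an inclusion--exclusion to produce $(-1)^{n-k}$ and assemble $\deg(\Delta_J)$, then descend from $H_I$ to $H_J$ — is the paper's strategy. But one crucial ingredient is missing, and it is not just bookkeeping: the generic vector $v\in L_\V$. The phrase ``treating it as a formal Laurent series in $t_i$'' is not well-defined, because $\phist$ is a meromorphic function whose Laurent expansion depends on the annulus of convergence, and the several factors $\phist(\l u_i^I,-w+\cdots\r,\tau,d_i\sigma)$ do not share a natural region. The paper resolves this by first specializing to $w=zv$ with $v$ generic; then the sign of $\l u_i^I,v\r$, i.e.\ whether $i\in I(v)$, governs which form of the expansion applies (Lemma \ref{lemm:BL}: $\sum_m t^{lm}\frac1{1-\zeta q^m}$ for $l>0$ versus $\sum_m t^{lm}\bigl(\frac1{1-\zeta q^m}-1\bigr)$ for $l<0$). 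It is precisely the ``$-1$'' in the $l<0$ factors that generates the sign: $\prod_{i}\bigl(\frac1{1-x_i}-1\bigr)=\sum_{J}(-1)^{|I\setminus J|}\prod_{j\in J}\frac1{1-x_j}$ with $J$ constrained to contain $I(v)\cup K$. Your ``telescoping $1/(1-x)=1+x/(1-x)$ from the $c_i=0$ slot'' is the reverse identity and does not produce a sign, so as written the $(-1)^{n-k}$ does not come out.

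The constraint $I(v)\subset J$ that falls out of this is also essential for the second half of your argument: by \eqref{eq:Sv}, $\sum_{I\colon I(v)\subset J\subset I}w(I)=\deg(\Delta_J)$, whereas the unconstrained sum $\sum_{I\supset J}w(I)$ that you write is in general not $\deg(\Delta_J)$ (it is not even independent of $v$). For the $h_1$-descent you should also be explicit that the paper first rewrites \eqref{eq:hatvar} in the stratified form \eqref{eq:hatvarv2} via $H_I=\sqcup_{K\subset I}\hat H_K$, carries $(K,h_1\in\hat H_K)$ through the whole computation, and only at the end re-packages via $H_J=\sqcup_{K\subset J}\hat H_K$ together with $f_{J,h}(\xi)=f_{K_h,h}(\xi)$ and $v_{J,h}=v_{K_h,h}$; you mention this but in the wrong direction (``$\hat H_J$ implicit in \eqref{eq:hatvarv2}'' — the set that appears there is $\hat H_K$). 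Finally, once \eqref{eq:BLv2} is established for all generic $v$, the passage to \eqref{algn:BL} needs the previously proved fact that $\hatvarst(\Delta,\V,\xi)\in(R(T)\otimes\C)[[q]]$ (Theorem \ref{theo:laurent}), so that matching Laurent coefficients along every generic direction determines the element of $R(T)$; you correctly point to Theorem \ref{theo:laurent} for this.
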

\begin{note}
$v_{J,h}=\sum_{i\in J}\l u_i^J,v(h)\r v_i=v(h)\in L_J$ for a particularly
chosen representative $v(h)$. Hence $\l u,v_{J,h}\r\in \Z$.
This shows that $\hatvarst(\Delta,\V,\xi)$ belongs to
$(R(T)\otimes \C)[[q]]$. This fact
was already proved in Theorem \ref{theo:laurent}.
Note that
\[\zeta^{f_{J,h}(\xi)}q^{\l u,v_{J,h}\r}
  \prod_{i\in J}\frac1{1-\zeta^{d_i} q^{\l u,v_i \r}}=\prod_{i\in J}
  \frac{(\zeta^{d_i} q^{\l u,v_i\r})^{f_{h,i}}}{1-\zeta^{d_i} q^{\l u,v_i \r}}. \]
\end{note}

We need the following three lemmas.
\begin{lemm}\label{lemm:BL1}
Suppose that $|q|<|t|<1$. Then we have the equality
\begin{equation}\label{eq:BL1}
 \phist(z,\tau,\sigma)=\sum_{m\in \Z}\frac{t^m}{1-\zeta q^m}.
\end{equation}
\end{lemm}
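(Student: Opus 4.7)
The plan is to treat both sides as meromorphic functions of $t=e^{2\pi\img z}$ with $\tau$ and $\sigma$ held fixed, and to show that their Laurent expansions in the annulus $|q|<|t|<1$ agree term by term. From the infinite product defining $\phist$ one reads off that, as a function of $t$, it is meromorphic on $\C^\times$ with simple poles exactly at $t=q^m$, $m\in\Z$; in particular it is holomorphic on the annulus $|q|<|t|<1$, and its Laurent coefficients are
\[ a_m=\frac{1}{2\pi\img}\oint_{|t|=r}\phist(z,\tau,\sigma)\,t^{-m-1}\,dt \qquad (|q|<r<1). \]

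On the right-hand side of \eqref{eq:BL1}, the hypothesis $|q|<|t|<1$ guarantees absolute convergence: after splitting at $m=0$ and rewriting the terms with $m\leq-1$ via $\frac{t^{-k}}{1-\zeta q^{-k}}=-\frac{\zeta^{-1}q^kt^{-k}}{1-\zeta^{-1}q^k}$, both half-series have geometrically decaying general terms. So the right-hand side is already displayed as a convergent Laurent series in $t$ whose coefficient of $t^m$ is $\frac{1}{1-\zeta q^m}$, and the lemma reduces to proving $a_m=\frac{1}{1-\zeta q^m}$.

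To extract $a_m$ I would exploit the quasi-periodicity $\phist(z+\tau,\tau,\sigma)=\zeta^{-1}\phist(z,\tau,\sigma)$ supplied by \eqref{eq:transformula2} with $m=1,n=0$. Substituting $t=sq$ in the contour integral transforms the circle $|s|=r$ into $|s|=r/|q|$ and multiplies the integrand by $\zeta^{-1}q^{-m}$. A direct check using $|q|<1$ shows that the only pole of $\phist$ in the annular region $r<|s|<r/|q|$ is the simple pole at $s=1$, so the residue theorem yields
\[ a_m=\zeta^{-1}q^{-m}\bigl(a_m+\mathrm{Res}_{t=1}[\phist(z,\tau,\sigma)\,t^{-m-1}]\bigr). \]
Passing to the limit $t\to 1$ in the product formula gives $\mathrm{Res}_{t=1}\phist=-1$, so the residue contribution is $-1$, and the resulting linear equation $(1-\zeta^{-1}q^{-m})a_m=-\zeta^{-1}q^{-m}$ has the unique solution $a_m=\frac{1}{1-\zeta q^m}$, matching the right-hand side term by term.

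The main obstacle is not conceptual but bookkeeping: verifying that exactly one pole of $\phist$ (namely $t=1$) lies between the two contours, and correctly evaluating its residue from the infinite product. Everything else is standard complex analysis on the annulus combined with the quasi-periodicity already recorded in \eqref{eq:transformula2}.
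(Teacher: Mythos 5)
Your proof is correct. The paper itself does not prove this lemma; it simply cites Lemma~3.5 of \cite{HM2} and the original source \cite{BL1}, so there is no in-text argument to compare against. Your argument --- identify the Laurent coefficient of $\phist$ in $t$ on the annulus $|q|<|t|<1$ by combining the quasi-periodicity $\phist(z+\tau,\tau,\sigma)=\zeta^{-1}\phist(z,\tau,\sigma)$ from \eqref{eq:transformula2} with the residue $-1$ of the unique simple pole of $\phist$ between the contours $|t|=r$ and $|t|=r/|q|$ --- is precisely the standard proof appearing in those sources, and all the details you flag check out: the only pole of $\phist$ strictly between the circles is $t=1$ (since $r<1<r/|q|$ and the poles of $\phist$ lie at $t\in q^{\Z}$), the rewriting $\frac{t^{-k}}{1-\zeta q^{-k}}=-\frac{\zeta^{-1}q^{k}t^{-k}}{1-\zeta^{-1}q^{k}}$ shows both tails converge geometrically, and the linear equation $(1-\zeta^{-1}q^{-m})a_m=-\zeta^{-1}q^{-m}$ indeed yields $a_m=\frac{1}{1-\zeta q^m}$ after clearing by $\zeta q^m$. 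One small point you might make explicit is the standing assumption $\sigma\notin\Z\tau+\Z$, which guarantees simultaneously that $\phist$ is defined, that its poles at $t\in q^{\Z}$ are genuine simple poles (not cancelled by zeros of the numerator), and that the denominators $1-\zeta q^m$ on the right-hand side never vanish.
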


\begin{lemm}\label{lemm:BL}
Put $\alpha=e^{2\pi\img w}$. Suppose that $|\alpha|=1$
and $|q|<|t|<1$.
If $l\not=0$ is an integer, then we have the equality
\[ \phist(lz+w,\tau,\sigma)=
\begin{cases}
 \sum_{m\in \Z}\alpha^mt^{lm}\dfrac1{1-\zeta q^m} &
 \text{if \ $l>0$}, \\
 \sum_{m\in \Z}\alpha^mt^{lm}
 \left(\dfrac1{1-\zeta q^m}-1\right) & \text{if \ $l<0$}.
\end{cases} \]
\end{lemm}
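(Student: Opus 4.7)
The plan is to derive both cases of the lemma from Lemma \ref{lemm:BL1}, which I would view as giving the Laurent expansion of $\phist(z,\tau,\sigma)$ in the annulus $|q|<|t|<1$. The two cases then correspond to whether the substitution $z\mapsto lz+w$ lands in that same annulus ($l>0$) or in the complementary annulus $1<|t|<|q|^{-1}$ ($l<0$).

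For $l>0$ I would simply substitute $z\mapsto lz+w$ in Lemma \ref{lemm:BL1}. The new exponential variable is $t'=e^{2\pi\img(lz+w)}=\alpha t^l$, and since $|\alpha|=1$ and $|t|<1$ one has $|t'|<1$; on a suitable subregion (or, failing that, as a formal Laurent identity or by analytic continuation) the convergence hypothesis $|q|<|t'|<1$ is met, and Lemma \ref{lemm:BL1} reads off the desired
\[ \phist(lz+w,\tau,\sigma)=\sum_{m\in\Z}\alpha^m t^{lm}\,\frac{1}{1-\zeta q^m}. \]

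For $l<0$ one has $|t'|=|t|^l>1$, so Lemma \ref{lemm:BL1} does not apply directly. The key step is to first produce the corresponding Laurent expansion of $\phist$ in the annulus $1<|t|<|q|^{-1}$, using the quasi-periodicity \eqref{eq:transformula2}: from $\phist(z,\tau,\sigma)=\zeta\,\phist(z+\tau,\tau,\sigma)$, the shift $t\mapsto tq$ sends that annulus into $|q|<|tq|<1$, where Lemma \ref{lemm:BL1} applies. A one-line computation yields
\[ \phist(z,\tau,\sigma)=\sum_{m\in\Z}\frac{\zeta q^m\,t^m}{1-\zeta q^m}\qquad(1<|t|<|q|^{-1}). \]
Substituting $z\mapsto lz+w$ then produces $\sum_m\alpha^m t^{lm}\cdot\zeta q^m/(1-\zeta q^m)$, and the elementary identity $\zeta q^m/(1-\zeta q^m)=1/(1-\zeta q^m)-1$ converts this into the claimed formula for $l<0$.

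I expect the main obstacle to be conceptual rather than computational: one has to recognize that the two formulas in the lemma record Laurent expansions of the same meromorphic function in two different annuli separated by the pole locus $|t|=1$, and that the transition between them is governed precisely by the quasi-periodicity of $\phist$. The only technical bookkeeping concerns convergence---for $|l|\ge 2$ one effectively needs $|q|<|t|^{|l|}$---which can either be read into the hypothesis or handled by interpreting the identities as formal Laurent series on appropriate open subregions.
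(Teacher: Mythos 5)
Your derivation is correct. Note that the paper itself gives no proof of this lemma: it simply states that Lemma \ref{lemm:BL} is ``essentially the same as Lemma 3.6 of \cite{HM2}'' and delegates the argument to that reference, so there is no in-paper proof to compare against. Your self-contained proof from Lemma \ref{lemm:BL1} is the natural one and all the steps check out: for $l>0$ the substitution $t\mapsto\alpha t^{l}$ in the expansion of Lemma \ref{lemm:BL1} immediately gives the first formula, and for $l<0$ the key observation is exactly right---one must pass to the Laurent expansion of $\phist$ in the adjacent annulus $1<|t|<|q|^{-1}$, which is obtained from Lemma \ref{lemm:BL1} by the quasi-periodicity \eqref{eq:transformula2} (with $m=1$), yielding $\phist(z,\tau,\sigma)=\sum_{m}\zeta q^{m}t^{m}/(1-\zeta q^{m})$ there; substitution and the identity $\zeta q^{m}/(1-\zeta q^{m})=1/(1-\zeta q^{m})-1$ then give the second formula. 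You are also right to flag the convergence caveat: the hypothesis $|q|<|t|<1$ as printed does not by itself guarantee convergence when $|l|\ge 2$, for which one needs $|q|<|t|^{|l|}$ (equivalently $1<|t|^{-|l|}<|q|^{-1}$ when $l<0$); this is exactly the strengthened hypothesis that appears in Lemma \ref{lemm:1} and in the way these lemmas are invoked in the proof of Theorem \ref{thm:BL2}, so it is the intended reading.
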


\begin{lemm}\label{lemm:1}
Let $f$ be a real number with $0<f<1$ and $l\not=0$ an integer.
If
\[ |q^f|,|q^{1-f}|<|t^{|l|}|,\ |t|\leq 1 \]
then
\begin{equation*}\label{eqn:1}
 \phist(lz+f\tau+w,\tau,\sigma)=
 \sum_{m\in \Z}\alpha^m t^{lm}
 \frac{q^{fm}}{1-\zeta q^m},
\end{equation*}
where $\alpha=e^{2\pi\img w},\ |\alpha|=1$ as before.
\end{lemm}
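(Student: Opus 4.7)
My plan is to derive the identity directly from Lemma \ref{lemm:BL1} by substituting $z$ with $lz+f\tau+w$ in that formula. Under this substitution the variable $t=e^{2\pi\img z}$ is replaced by $T:=e^{2\pi\img(lz+f\tau+w)}=t^l q^f\alpha$, and the right-hand side of Lemma \ref{lemm:BL1} formally becomes
\[ \sum_{m\in\Z}\frac{T^m}{1-\zeta q^m}=\sum_{m\in\Z}\alpha^m t^{lm}\frac{q^{fm}}{1-\zeta q^m}, \]
which is exactly the right-hand side of the desired identity.

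The only substantive step is to verify that $T$ lies in the annulus $|q|<|T|<1$ on which the series of Lemma \ref{lemm:BL1} converges, so that the substitution is legitimate and not merely formal. Since $|\alpha|=1$ one has $|T|=|t|^l|q|^f$. I would check the two inequalities $|q|<|T|$ and $|T|<1$ case by case on the sign of $l$, using $0<f<1$, $|t|\leq 1$, $|q|<1$, and the two hypotheses $|q^f|<|t^{|l|}|$ and $|q^{1-f}|<|t^{|l|}|$. For $l>0$, the hypothesis $|q^{1-f}|<|t|^l$ yields $|q|=|q|^{1-f}|q|^f<|t|^l|q|^f=|T|$, while $|T|<1$ is immediate from $|t|\leq 1$ and $|q|^f<1$. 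For $l<0$, the hypothesis $|q^f|<|t|^{|l|}$ gives $|T|=|t|^{-|l|}|q|^f<1$, and the required $|q|<|T|$ is equivalent to $|q|^{1-f}|t|^{|l|}<1$, which follows at once from $|t|\leq 1$ and $|q|^{1-f}<1$.

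Once this annulus condition is in hand, applying Lemma \ref{lemm:BL1} with $z$ replaced by $lz+f\tau+w$ produces the stated identity. The proof is conceptually nothing more than a substitution into an already established formula; the only (minor) obstacle is the bookkeeping of the convergence inequalities across the two sign cases for $l$, which is precisely why the hypothesis is stated symmetrically as $|q^f|,|q^{1-f}|<|t^{|l|}|$ rather than as a single bound.
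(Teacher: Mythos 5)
Your proof is correct. The paper does not supply its own argument for this lemma (it merely cites Lemma~3.7 of \cite{HM2}), but your derivation is the natural and standard one: set $T=e^{2\pi\img(lz+f\tau+w)}=\alpha t^l q^f$, observe that the hypotheses $|q^f|,|q^{1-f}|<|t^{|l|}|$ together with $|t|\le 1$ and $0<|q|<1$ force $|q|<|T|<1$ for either sign of $l$, and then apply Lemma~\ref{lemm:BL1} with $z$ replaced by $lz+f\tau+w$. Your case analysis is clean, and your closing remark is apt: the factor $q^f$ (with $0<f<1$) is exactly what restores the annulus condition when $l<0$, which is why Lemma~\ref{lemm:1}, unlike Lemma~\ref{lemm:BL}, has a single unified formula rather than two cases.
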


Lemma \ref{lemm:BL1} is essentially the same as Lemma 3.5 of
\cite{HM2}. The equality \eqref{eq:BL1} was first proved in \cite{BL1}.
Lemma \ref{lemm:BL} is essentially the same as Lemma 3.6 of \cite{HM2}.
Lemma \ref{lemm:1} is essentially the same as Lemma 3.7 of \cite{HM2}.

We now proceed to the proof of Theorem \ref{thm:BL2}. Take a generic
vector $v\in L_\V$. Then $\l u_i^I, v\r$ is an integer for
any $I\in \sigmn$.
For $I\in \sigmn$ we put $I(v)=\{i\mid \l u_i^I,v\r<0\}$.
We take a representative $v(h_1)$ of $h_1\in \hat{H}_K$
such that
\[ \l u_i^I,v(h_1)\r=\l u_i^K,v(h_1)\r =f_{h_1,i}\]
for each $i\in K$. Then, by \eqref{eq:hatvarv2}, $\hatvarvst(\Delta,\V,\xi)$
can be written in the form
\begin{equation}\label{eq:hatvar3}
\begin{split}
 \hatvarvst(\Delta,\V,\xi)=&\sum_{k=0}^n\sum_{K\in\Sigma^{(k)},h_i\in \hatH_K}
 \sum_{I\in \Sigma_K^{(n-k)}}\frac{w(I)}{|H_I|} \\
 &\prod_{i\in I\setminus K}
\phist(-\l u_i^I,zv+v(h_2)\r,\tau,d_i\sigma) \\
&\prod_{i\in K}\zeta^{d_if_{h_1,i}}
\phist(-\l u_i^I,zv-\tau v(h_1)+v(h_2)\r,\tau,d_i\sigma).
\end{split}
\end{equation}

If $t\in \C$ satisfies
\[ |q^{f_{h_1,i}}|,|q^{1-f_{h_1,i}}|<|t^{|\uv|}|, \quad |t|<1, \]
then by Lemma \ref{lemm:1}, we have
\[ \begin{split}
 \phist(-&\l u_i^I,zv-\tau v(h_1)+v(h_2)\r,
\tau, d_i\sigma) \\
&=\sum_{m_i\in\Z}\chi_I(u_i^I,v(h_2))^{-m_i}q^{m_if_{h_1,i}}t^{-m_i\uv}
\frac1{1-\zeta^{d_i} q^{m_i}}.
\end{split} \]

Next fix $i\in I\setminus K$. Then, by Lemma \ref{lemm:BL}, we have
\[ \begin{split}
 &\phist(-\l u_i^I,zv+v(h_2)\r,\tau,d_i\sigma) \\
 &= \begin{cases}
  \sum_{m_i\in\Z}\chi_I(u_i^I,v)^{-m_i}t^{-m_i\uv}\dfrac1{1-\zeta^{d_i} q^{m_i}}
  \qquad \text{for}
  \ i\in I(v)\setminus K, \\
  \sum_{m_i\in\Z}\chi_I(u_i^I,v)^{-m_i}t^{-m_i\uv}
 \left(\dfrac1{1-\zeta^{d_i} q^{m_i}}-1\right)
  \quad \text{for} \ i\in I\setminus (I(v)\cup K).
\end{cases}
\end{split}
\]

Now suppose that $|t|<1$ and $q$ satisfy
\[ |q^{f_{h_1,i}}|, |q^{1-f_{h_1,i}}|<|t^{|\l u_i^I,v\r|}| \]
for all $i\in K,\ h_1\in \hat{H}_K, \ K\in \sigmk$,
and
\[ |q|<|t^{|\l u_i^I,v\r|}| \]
for all $i\in I\setminus K, I\in \Sigma_K^{(n-k)}$ and
$K\in \sigmk$. Then we obtain

\begin{equation*}\label{eq:new1}
\begin{split}
\prod_{i\in I\setminus K}\!\! \phist(-&\l
u_i^I,zv+v(h_2)\r,\tau,d_i\sigma)\!\! \prod_{i\in
K}\zeta^{d_if_{h_1,i}}
\phist(-\l u_i^I,zv-\tau v(h_1)+v(h_2)\r,\tau,d_i\sigma) \\
 &=\zeta^{f_{K,h_1}(\xi)}\prod_{i\in I(v)\setminus K}{\sum_{m_i\in \Z}
 \chi_I(u_i^I,h_2)^{-m_i}t^{-m_i\uv}\frac{1}{1-\zeta^{d_i} q^{m_i}}} \\
 &\quad \prod_{i\in I\setminus (I(v)\cup K)}{\sum_{m_i\in \Z}
 \chi_I(u_i^I,h_2)^{-m_i}
 t^{-m_i\uv}\left(\frac{1}{1-\zeta^{d_i} q^{m_i}}-1\right)} \cdot \\
&\quad \quad \ \prod_{i\in K}{\sum_{m_i\in \Z}
\chi_I(u_i^I,h_2)^{-m_i}q^{m_if_{h_1,i}}
 t^{-m_i\uv}\frac{1}{1-\zeta^{d_i} q^{m_i}}}.
\end{split}
\end{equation*}

Furthermore
\begin{equation*}\label{eq:sumJ}
\begin{split}
\prod_{i\in I(v)\setminus K}&\frac1{1-\zeta^{d_i}q^{m_i}}
\prod_{i\in I\setminus (I(v)\cup K)}
\left(\frac1{1-\zeta^{d_i} q^{m_i}}-1\right)
\prod_{i\in K}\frac1{1-\zeta^{d_i} q^{m_i}} \\
=&\sum_{l=0}^n\sum_{J\in \Sigma^{(l)} :(I(v)\cup K)\subset J\subset I}
(-1)^{n-l}\prod_{j\in J}\frac1{1-\zeta^{d_i} q^{m_j}},
\end{split}
\end{equation*}
for each $I\in \Sigma_K^{(n-l)}$.

If we put $u=\sum_{i\in I}m_iu_i^I \in L_{I,\V}^*$, then
$m_i=\langle u,v_i\rangle$.
%replaced "therefore" by "hence" to prevent overfull (Taras)
Hence $\prod_{i\in I}t^{-m_i\langle u_i^I,v\rangle} =t^{-\langle
u,v\rangle}$. Since $\chi_I(u,\ )=e^{2\pi\sqrt{-1}\langle u,\
\rangle}$ we see that $\prod_{i\in
I}\chi_I(u_i^I,h)^{-m_i}=\chi_I(u,h)^{-1}$. The $1$-dimensional
representation $\chi_I(u,h)^{-1}$ of $H_I=L/L_{I,\V}$ is trivial
if and only if $u\in L^*$. It follows that
\[ \sum_{h\in H_I}\chi_I(u,h)^{-1}=
\begin{cases}
  |H_I|\quad &\text{if}\ u\in L^*, \\
  0 &\text{if}\ u\not\in L^*.
\end{cases} \]

Combining these we have
\begin{equation}\label{eq:new2}
%font size decreased to fit into the line (Taras)
\small
 \begin{split}
 \sum_{h_2\in H_I}
 &\prod_{i\in I\setminus K}\!\!
\phist(-\l u_i^I,zv+v(h_2)\r,\tau,d_i\sigma)
 \!\!\prod_{i\in K}\zeta^{d_if_{h_1,i}}
\phist(-\l u_i^I,zv-\tau v(h_1)+v(h_2)\r,\tau,d_i\sigma)  \\
 =&|H_I|\!\!\sum_{u\in L^*}t^{-\langle u,v\rangle}
  \zeta^{f_{K,h_1}(\xi)}q^{\l u,v_{K,h_1}\r}
  \Bigl(
   \sum_{k=0}^n\sum_{J\in \Sigma^{(l)} :(I(v)\cup K)\subset J\subset I}
  \!\!\!\!(-1)^{n-l}
   \prod_{j\in J}\frac1{1-\zeta^{d_i} q^{\langle u,v_j\rangle}}\Bigr).
\end{split}
\end{equation}

Fix $J\in \Sigma$. It is easy to see that the union of
$\{\hat{H}_K\mid K\in \Sigma, K\subset J\}$ is disjoint.
Since any $h\in H_J$ is contained in
$\hat{H}_{K_h}$ by (\ref{eq:0}) where
$K_h=\{j\in J\mid f_{h,j}\not=0\}$, we have
$H_J=\sqcup \hat{H}_K$.
Moreover we have
\[ f_{J,h}(\xi)=f_{K_h,h}(\xi) \quad\text{and}\quad v_{J,h}=v_{K_h,h}.\]

Taking these facts in account in \eqref{eq:new2} and
using \eqref{eq:hatvar3},  we get
\begin{equation}\label{eq:BLv}
%font size decreased to fit into the line (Taras)
\small
\begin{split}
&\hatvarvst(\Delta,\V,\xi) \\
=& \sum_{u\in L^*}t^{-\l u,v\r}\sum_{k=0}^n\sum_{J\in \sigmk}
\sum_{h\in H_J}\zeta^{f_{J,h}(\xi)}q^{\l u,v_{J,h}\r}
(-1)^{n-k}\sum_{I:I(v)\subset J\subset I}w(I)
\prod_{j\in J}\frac1{1-\zeta^{d_i} q^{\langle u,v_j\rangle}}.
\end{split}
\end{equation}

Since $\sum_{I\in \sigmn :I(v)\subset J\subset I}w(I)=
\deg(\Delta_J)$ by definition we have
\begin{equation}\label{eq:BLv2}
 \begin{split}
 &\hatvarvst(\Delta,\V,\xi) \\
 =
 &\sum_{u\in L^*}t^{-\l u,v\r}\biggl(\sum_{k=0}^n
 \sum_{J\in\Sigma^{((k)},h\in H_J}(-1)^{n-k}
  \deg(\Delta_J)\zeta^{f_{J,h}(\xi)}q^{\l u,v_{J,h}\r}
  \prod_{i\in J}\frac1{1-\zeta^{d_i} q^{\l u,v_i \r}}\biggr).
\end{split}
\end{equation}

Since $\hatvarst(\Delta,\V,\xi)$ belongs to
$(R(T)\otimes\C])[[q]]$
and \eqref{eq:BLv2} holds for any generic vector $v$,
Theorem \ref{thm:BL2} follows.

\begin{rema}
Stabilized orbifold elliptic genus along a vector $v$ is defined for
a complete simplicial multi-fan by the formula \eqref{eq:hatvarv}.
However the right-hand side of \eqref{eq:hatvarv} still has meaning for
general simplicial multi-fans. Thus we can define $\hatvarvst(\Delta,\V,\xi)$
in general by \eqref{eq:hatvarv}. It can be written in the form
\eqref{eq:hatvarv2} too.
Note that Theorem \ref{theo:laurent}
does not hold in general because it depends on an integrality theorem
(Lemma 2.5 in \cite{HM2}) that holds only for complete simplicial multi-fans.
Neither holds Theorem \ref{thm:BL2} in general. But the following Proposition
holds.
\end{rema}

\begin{prop}\label{prop:BL2}
 Let $\Delta$ be a simplicial fan satisfying
the condition that every $J\in \Sigma$ is contained in some $I\in \sigmn$.
Then \eqref{eq:BLv2} holds provided that the generic vector $v$
is contained in $\bigcup_{I\in \sigmn}C(I)$ :
\begin{equation*}\label{eq:BLv2b}
 \begin{split}
 &\hatvarvst(\Delta,\V,\xi) \\
 =
 &\sum_{u\in L^*}t^{-\l u,v\r}\left(\sum_{k=0}^n
 \sum_{J\in\Sigma^{(k)},h\in H_J}(-1)^{n-k}
  \zeta^{f_{J,h}(\xi)}q^{\l u,v_{J,h}\r}
  \prod_{i\in J}\frac1{1-\zeta^{d_i} q^{\l u,v_i \r}}\right).
\end{split}
\end{equation*}
\end{prop}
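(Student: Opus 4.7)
The plan is to mimic the proof of Theorem \ref{thm:BL2} as far as possible, since the derivation there, from the defining expression of $\hatvarvst(\Delta,\V,\xi)$ to the intermediate identity \eqref{eq:BLv}, made no use of either the completeness of $\Delta$ or the specialization $w^{\pm}\equiv 1$. First I would start from the expression \eqref{eq:hatvarv2} for $\hatvarvst(\Delta,\V,\xi)$, which remains meaningful for general simplicial multi-fans by the preceding Remark; apply Lemmas \ref{lemm:BL1}, \ref{lemm:BL}, and \ref{lemm:1} to expand each $\phist$-factor as a Laurent series in $t$ in a suitable region of convergence of $(t,q)$; and then sum over $h_2\in H_I$, using the orthogonality of characters of $H_I$ to extract the contribution from $u\in L^*$.

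Reindexing via the partition $H_J=\bigsqcup_{K\subset J}\hat{H}_K$, together with the equalities $f_{K,h_1}(\xi)=f_{J,h}(\xi)$ and $v_{K,h_1}=v_{J,h}$ when $h\in\hat{H}_K\subset H_J$ (so that $f_{h,i}=0$ for $i\in J\setminus K$), reproduces the identity \eqref{eq:BLv} verbatim, but now for the (possibly incomplete) simplicial multi-fan $\Delta$. The proposition therefore reduces to showing that $\sum_{I\in\sigmn:\,I(v)\subset J\subset I}w(I)=1$ for every $J\in\Sigma$; this is the one place where completeness entered the original argument (through the identification of this sum with $\deg(\Delta_J)$).

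Because $\Delta$ is a fan (so $w(I)=1$), this identity becomes the assertion that exactly one $I\in\sigmn$ satisfies $J\subset I$ and $I(v)\subset J$. Geometrically, these two conditions on $I$ are jointly equivalent to $\bar v:=\pi_J(v)\in C_J(I)$ inside the projected fan $\Delta_J$, so the count equals the number of maximal cones of $\Delta_J$ containing $\bar v$. The fan property passes to $\Delta_J$, giving disjoint interiors of its maximal cones, and genericity of $v$ for $\Delta$ entails genericity of $\bar v$ for $\Delta_J$, so the count is at most $1$. The main obstacle is to rule out the count-$0$ case, i.e., to show $\bar v\in|\Delta_J|$ for every $J\in\Sigma$. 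The hypothesis $v\in\bigcup_{I\in\sigmn}C(I)$ handles this whenever $J\subset I_0$ with $v\in C(I_0)$ (take $I=I_0$); for the remaining $J\not\subset I_0$, one invokes the standing assumption that every $J\in\Sigma$ lies in some $I\in\sigmn$ and argues that the affine subspace $v+L_{J,\R}$ meets some $C(I)$ with $I\supset J$. Once the count-$1$ identity is established for all $J\in\Sigma$, substituting back into \eqref{eq:BLv} delivers the formula asserted by the proposition.
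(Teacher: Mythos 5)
Your approach is the same as the paper's: reduce, via the proof of Theorem~\ref{thm:BL2} up through \eqref{eq:BLv} (a step that uses neither completeness nor $w^\pm\equiv 1$), to the claim that for every $J\in\Sigma$ exactly one $I\in\sigmn$ satisfies $I(v)\subset J\subset I$. You correctly isolate the delicate point as ruling out the count-zero case, i.e.\ showing $\bar{v}\in|\Delta_J|$, which the paper merely asserts. But the argument you sketch for $J\not\subset I_0$ --- that the affine subspace $v+L_{J,\R}$ must meet some $C(I)$ with $I\supset J$ --- is not justified, and in fact fails under the stated hypotheses. Take $n=2$, $\sigmn=\{\{1,2\},\{2,3\},\{3,4\}\}$, $v_1=(1,0)$, $v_2=(0,1)$, $v_3=(-1,0)$, $v_4=(0,-1)$, and $v=(a,b)$ generic with $a,b>0$, so $v\in C(\{1,2\})$. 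This is a simplicial fan, every simplex lies in a maximal one, and $v\in\bigcup_{I\in\sigmn}C(I)$. Yet for $J=\{4\}$ the only $I\in\sigmn$ with $J\subset I$ is $I=\{3,4\}$, for which $I(v)=\{3,4\}\not\subset\{4\}$; the count is $0$, because the line $v+L_{J,\R}=\{(a,y):y\in\R\}$ never meets the third-quadrant cone $C(\{3,4\})$.

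What is actually needed --- and what holds in every application the paper makes of this proposition, namely to $\Delta(I)$ (the fan of faces of a single $I\in\sigmn$) and to its triangulations --- is that the support $\bigcup_{I\in\sigmn}C(I)$ is convex. With that hypothesis one takes a point $p$ in the relative interior of $C(J)$, moves slightly along the segment from $p$ to $v$ (which by convexity stays inside the support and for small parameter lands in the star of $C(J)$), and rescales by positivity of cones to get $v\in L_{J,\R}+\bigcup_{I\supset J}C(I)$, i.e.\ $\bar{v}\in|\Delta_J|$. Without convexity or a comparable star-shapedness assumption the count-one claim, and hence the displayed identity, is false, so the step you flag but leave unproved is a genuine gap --- one shared by the paper's own proof, whose assertion that ``$\bar v$ is generic and contained in $\bigcup_{I\in\Sigma_J^{(n-l)}}C_J(I)$'' is given no justification either.
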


\begin{proof}
Note first that $w(I)=1$ for all $I\in \sigmn$ since
$\Delta$ is a fan. It suffices to show that
\[ \#(S_v(J))= \#\{I\mid I(v)\subset J\subset I\}=1 \]
for all $J\in \Sigma$ provided that $v$
is contained in $\bigcup_{I\in \sigmn}C(I)$ because the completeness of
$\Delta$ was only used at the step from \eqref{eq:BLv} to
\eqref{eq:BLv2}. Suppose that $J$ lies in
$\Sigma^{(l)}$. Then the set $\{I\mid I(v)\subset J\subset I\}$ is identified
with the set $S_v(J)=\{I\in \Sigma_J^{(n-l)}\mid \bar{v}\in C_J(I)\}$
where $\bar{v}$ is the image of $v$ in $L^J_\R$. But
$\Delta_J$ is also a fan and every simplex $K\in \Sigma_J$ is contained in
some $I\in \Sigma^{(n-l)}$. Moreover $\bar{v}$ is generic and contained in
$\bigcup_{I\in \Sigma_J^{(n-l)}}C_J(I)$. Hence $\#(S_v(J))=1$.
\end{proof}

\begin{rema}\label{rem:BL}
If we consider $\hatvarst(\Delta,\V,\xi)$ as a function
assigning to each generic vector $v\in \bigcup_{I\in \sigmn}C(I)$
the value $\hatvarvst(\Delta,\V,\xi)$, then \eqref{algn:BL} holds
in this case with the understanding that $t^{-u}$ is a function
assigning the value $t^{\l u,v\r}$ to $v$.
\end{rema}

%chp6.tx platex2e

\section{Invariance of orbifold elliptic genus under
birational morphisms of multi-fans}

Let $\Delta$ be a simplicial multi-fan (not necessarily complete)
in a lattice $L$ and $\V=\{v_i\}_{i\in \sigmone}$ a set of prescibed
edge vectors as before.
We shall consider a multi-fan $\delpr=(\sigmpr,C^\prime,
w^{\prime\pm})$ in the same lattice $L$ and a
morphism $\rho:\delpr \to \Delta$ related to $\Delta$ in the following way.
$\rho$ may be regarded as a generalization of birational morphism in
toric theory. We require the following conditions for $\delpr$:
\begin{enumerate}
\renewcommand{\labelenumi}{\alph{enumi})}
\item There is an injection $\kappa:\sigmone\to\Sigma^{\prime(1)}$
satisfying
$C(i)=C^\prime(\kappa(i))$ for each $i \in \sigmone$.
\item For each $J^\prime \in \sigmpr$ there is a simplex $J\in \Sigma$
such that $\Cpr(\Jpr)\subset C(J)$. Moreover, for each $J\in \Sigma$,
the collection $\{\Cpr(\Jpr)\mid \Jpr\in \sigmpr, \Cpr(\Jpr)\subset C(J)\}$
gives a subdivision of the cone $C(J)$.
We shall denote by $\rho(\Jpr)$ the minimal simplex $J\in \Sigma$ such that
$\Cpr(\Jpr)\subset C(J)$.
\item For $\Ipr\in \sigmprn$
\[ w^{\prime\pm}(\Ipr)=w^\pm(\rho(\Ipr)). \]
In particular $w^\prime(\Ipr)=w(\rho(\Ipr))$.
\end{enumerate}
$\rho(\{\ipr\})$ is
simply denoted by $\rho(\ipr)$. Note that $\rho(\kappa(i))=i$ for
$i\in \sigmone$. The map $\rho:\sigmpr\to \Sigma$ is sometimes denoted
by $\rho:\delpr\to \Delta$.
Let $\V=\{v_i\}_{i\in \sigmone}$ and $\Vpr=\{v_{\ipr}\}_{\ipr \in \sigmprone}$
be sets of edge vectors for $\Delta$ and
$\delpr$ respectively. Note that $v_{\kappa(i)}$ and $v_i$ lie on the
same half line $C(i)$ but they may be different.

The vector $v_{\ipr}$ is written uniquely in the form
\begin{equation*}\label{eq:rho1}
 v_{\ipr}=\sum_{i\in \rho(\ipr)}a_{\ipr i}v_i
\end{equation*}
with $a_{\ipr i}\in \Q_{>0}$. We put $a_{\ipr i}=0$ for
$i\not\in \rho(\ipr)$. We then define a map
$\rho^* :H_T^2(\Delta,\V)\otimes\Q\to H_T^2(\delpr,\Vpr)\otimes\Q$ by
\begin{equation*}\label{eq:rho2}
 \rho^*(x_i)=\sum_{\ipr\in \sigmprone}a_{\ipr i}x_{\ipr}.
\end{equation*}

\begin{rema}\label{rema:xiquot}
As a special case related to \eqref{eq:ramif} consider the following
situation: $\delpr=\Delta$ and $\Vpr=\{v_i^\prime\}_{i\in \sigmone},
\V=\{v_i\}_{i\in \sigmone}$ with $v_i^\prime=a_iv_i,\ a_i>0$. If
$\xi=\sum_ic_ix_i$, then $\rho^*(\xi)=\sum_ia_ic_ix_i$.
In particular $\rho^*(\xi)=\sum_ix_i$ if and only if
$\xi=\sum_i\frac1{a_i}x_i$. In this case $D_\Delta$ defined in
Remark \ref{rema:singEll} is given by $\sum_i\frac{a_i-1}{a_i}x_i$.
As typical examples of such a situation one can quote weighted
projective spaces $\Proj(a_0,\ldots,a_n)$. We assume that
the greatest common divisor of $\{a_0,\ldots, \widehat{a_i},\ldots,a_n\}$
is equal to $1$ for all $0\leq i\leq n$. $\Proj(a_0,\ldots,a_n)$ has
two natural orbifold structures; one is given as the quotient space
of $\C^{n+1}\setminus \{0\}$ by the action of $\C^*$ given by
\[ z(z_0,\ldots,z_n)=(z^{a_0}z_0,\ldots,z^{a_n}z_n), \]
and the other as a global quotient of $\Proj^n$ by the standard action
of the group $\Z/a_0\Z\times \cdots\times \Z/a_n\Z$. The former corresponds
to $\V$ and the latter to $\Vpr$.
\end{rema}

\begin{lemm}\label{lemm:rhostar}
$\rho^*$ extends to a ring homomorphism
$\rho^* :H_T^*(\Delta,\V)\otimes\Q\to H_T^*(\delpr,\Vpr)\otimes\Q$.
Moreover $\rho^*(u)=u$ for $u\in L_\Q^*$ and
$\rho^*$ is an $H_T^*(BT)\otimes\Q$-module map. It satisfies
\begin{equation}\label{eq:iotarho}
 \iota_{\Ipr}^*\circ \rho^*=\iota_I^*
\end{equation}
for any $I\in \sigmn$ and $\Ipr\in \sigmprn$ such that $\rho(\Ipr)=I$.
\end{lemm}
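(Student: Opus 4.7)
The plan has four pieces, carried out in the order below.

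First, and most substantively, I would verify that $\rho^*$ passes to the Stanley--Reisner quotient, i.e.\ that the defining ideal $\mathcal{I}\subset R=\Z[x_i]$ of $\Delta$ maps into the analogous ideal $\mathcal{I}'$ of $\delpr$. Pick $J\notin\Sigma$ and expand
\[
\rho^*\Bigl(\prod_{i\in J}x_i\Bigr)=\sum_{\phi\colon J\to\sigmprone}\Bigl(\prod_{i\in J}a_{\phi(i),i}\Bigr)\prod_{i\in J}x_{\phi(i)}.
\]
A nonzero term requires $i\in\rho(\phi(i))$ for every $i\in J$. Let $\Jpr=\{\phi(i)\mid i\in J\}$ be the underlying vertex set. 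If $\Jpr\notin\sigmpr$, the monomial already lies in $\mathcal{I}'$. If $\Jpr\in\sigmpr$, set $J^*=\rho(\Jpr)\in\Sigma$; for each $\ipr\in\Jpr$ one has $\Cpr(\ipr)\subset\Cpr(\Jpr)\subset C(J^*)$, and the minimality built into condition (b) forces $\rho(\ipr)\subset J^*$. Hence $J\subset\bigcup_{i\in J}\rho(\phi(i))\subset J^*$, so $J$ is a face of a simplex of $\Sigma$ and therefore lies in $\Sigma$, contradicting the choice of $J$. Thus every nonzero monomial lies in $\mathcal{I}'$ and $\rho^*$ descends to a ring homomorphism.

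Second, for $u\in L^*_\Q$ the identification $u=\sum_{i\in\sigmone}\langle u,v_i\rangle x_i$ combined with $v_{\ipr}=\sum_{i\in\rho(\ipr)}a_{\ipr i}v_i$ yields
\[
\rho^*(u)=\sum_{\ipr\in\sigmprone}\Bigl\langle u,\sum_{i}a_{\ipr i}v_i\Bigr\rangle x_{\ipr}=\sum_{\ipr}\langle u,v_{\ipr}\rangle x_{\ipr}=u,
\]
the last equality being the analogous identification in $H_T^2(\delpr,\Vpr)\otimes\Q$. Combined with step one, this gives $S^*(L_\Q)=H^*(BT)\otimes\Q$-linearity automatically.

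Third, to verify $\iota_{\Ipr}^*\circ\rho^*=\iota_I^*$ with $\rho(\Ipr)=I$, I would test on generators $x_j$. For $j\notin I$: any $\ipr\in\Ipr$ satisfies $\rho(\ipr)\subset\rho(\Ipr)=I$, so $a_{\ipr j}=0$ and both sides vanish. For $j\in I$: since $|I|=|\Ipr|=n$ and both $\{v_i\}_{i\in I}$ and $\{v_{\ipr}\}_{\ipr\in\Ipr}$ are bases of $L_{I,\V}\otimes\Q=L_{\Ipr,\Vpr}\otimes\Q$, the transition matrix $A=(a_{\ipr i})_{\ipr\in\Ipr,\,i\in I}$ is rationally invertible. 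Dualizing $v_{\ipr}=\sum_i a_{\ipr i}v_i$ gives $\langle u_j^I,v_{\ipr}\rangle=a_{\ipr j}$, and pairing $\sum_{\ipr\in\Ipr}a_{\ipr j}u_{\ipr}^{\Ipr}$ against each $v_{\ipr'}$ produces the same value; hence $\sum_{\ipr}a_{\ipr j}u_{\ipr}^{\Ipr}=u_j^I$ in $L_{\Ipr,\Vpr}^*\otimes\Q$, yielding $\iota_{\Ipr}^*\rho^*(x_j)=u_j^I=\iota_I^*(x_j)$.

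The main obstacle is the first step: it is the only place where one leaves pure linear algebra and genuinely uses the combinatorial content of $\rho$, specifically the minimality in condition (b) together with closure of $\Sigma$ under faces. The other steps are routine manipulations with the dual bases $\{u_i^I\}$ and $\{u_{\ipr}^{\Ipr}\}$ once the transition matrix $A$ has been identified.
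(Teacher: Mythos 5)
Your proposal is correct and follows essentially the same route as the paper: reduce part one to showing that a non-face $J$ survives to a simplex of $\Sigma$ via minimality of $\rho$, verify $H^*(BT)\otimes\Q$-linearity by direct expansion, and check \eqref{eq:iotarho} on generators by a transition-matrix computation. The only cosmetic differences are that you are a bit more careful than the printed proof on two points -- you conclude $J\subset\rho(\Jpr)$ (a face) rather than claiming $\rho(\Jpr)=J$ outright, and you explicitly dispose of the case $j\notin I$ in part three by observing $\rho(\ipr)\subset I$ for $\ipr\in\Ipr$ -- and that you establish $\sum_{\ipr}a_{\ipr j}u_{\ipr}^{\Ipr}=u_j^I$ by testing against the basis $\{v_{\ipr'}\}$ rather than by exhibiting the inverse matrix $(b_{\ipr i})$ as the paper does; both routes are routine and equivalent.
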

\begin{proof}
Let $I$ be a non-empty subset of $\sigmone$. In order to show that
$\rho$ extends to a ring homomorphism it is enough
to show that $\prod_{i\in I}x_i=0$ implies
\[ \prod _{i\in I}\rho^*(x_i)=
 \prod _{i\in I}\sum_{\ipr\in \sigmprone}a_{\ipr i}x_{\ipr}=0. \]
Assume that there is an element $\ipr(i)\in \sigmprone$ for each $i\in I$
such that $i\in\rho(\ipr(i))$ and $\prod_{i\in I}x_{\ipr(i)}\not=0$.
Then $\Ipr=\{\ipr(i)\mid i\in I\}$ is a simplex and $\rho(\Ipr)=I$.
Hence $I$ must be a simplex. This contradicts the fact
that $\prod_{i\in I}x_i=0$. Therefore, for all subsets
$\{\ipr\mid i\in\rho(\ipr),i\in I\}$, the product $\prod x_{\ipr}$
must be equal to $0$. Then
$\prod _{i\in I}\sum_{\ipr\in \sigmprone}a_{\ipr i}x_{\ipr}$, as
a linear combination of such elements, is equal to $0$.

For $u\in L_\Q^*$ we have
\begin{align*}
 \rho^*(u)&=\sum_i\l u,v_i\r\rho^*(x_i) \\
 &=\sum_i\l u,v_i\r\sum_{\ipr}a_{\ipr i}x_{\ipr} \\
 &=\sum_{\ipr}\l u,v_{\ipr}\r x_{\ipr}=u.
\end{align*}
Then $\rho^*(ux)=\rho^*(u)\rho^*(x)=u\rho^*(x)$ for any
$x\in H_T^*(\Delta,\V)$.
This shows that $\rho^*$ is an $H_T^*(BT)\otimes\Q$-module homomorphism.

In order to prove \eqref{eq:iotarho} we may check it on the generators
$x_i\in H_T^2(\Delta)$ since $\iota_I^*$, $\iota_{\Ipr}^*$
and $\rho^*$ are ring homomorphisms. If $v_{\ipr}=\sum_{i\in I}a_{\ipr i}v_i$
for $\ipr\in \Ipr$, then $u_{\ipr}^{\Ipr}=\sum_{i\in I}b_{\ipr i}u_i^I$ with
$\sum_{\ipr\in \Ipr}a_{\ipr i}b_{\ipr j}=\delta_{ij}$. Hence
\begin{align*} \iota_{\Ipr}^*(\rho^*(x_i))
&=\iota_{\Ipr}^*(\sum_{\ipr}a_{\ipr i}x_{\ipr}) \\
&=\sum_{\ipr}a_{\ipr i}u_{\ipr}^{\Ipr} \\
&=\sum_{\ipr}\sum_ja_{\ipr i}b_{\ipr j}u_j^I=u_i^I=\iota_I^*(x_i).
\end{align*}
This proves \eqref{eq:iotarho}.
\end{proof}

If $\xi=\sum_{i\in \sigmone}d_ix_i$ and $\rho^*(\xi)=
\sum_{\ipr\in \sigmprone}d_{\ipr}x_{\ipr}$, then
\begin{equation*}\label{eq:dipr}
 d_{\ipr}=\sum_{i\in \rho(\ipr)}a_{\ipr i}d_i
\end{equation*}
as is easily seen. If we put $u^J(\xi)=\sum_{i\in J}d_iu_i^J$ with
$J=\rho(\ipr)$, then $d_{\ipr}$ is also written as
\begin{equation}\label{eq:dipr2}
 d_{\ipr}=\l u^J(\xi),v_{\ipr}\r.
\end{equation}
Note also
\[ d_i=\l u^J(\xi),v_i\r.  \]
\begin{theo}\label{thm:crepant1}
Let $\Delta$ be a complete simplicial multi-fan and let
$\rho:\delpr \to \Delta$ be a map satisfying a), b) and c). We
put $\xipr=\rho^*(\xi)$. Then $\delpr$ is also complete and
the following equality holds:
\[ \hatvarst(\delpr,\Vpr,\xipr)=\hatvarst(\Delta,\V,\xi). \]
\end{theo}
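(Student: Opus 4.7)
Following the roadmap sketched in the introduction, my plan is to apply the Borisov--Libgober type character formula of Theorem \ref{thm:BL2} on both sides and reduce the global equality $\hatvarst(\delpr,\Vpr,\xipr)=\hatvarst(\Delta,\V,\xi)$ to a \emph{local} equality attached to each individual cone $C(J)$ of $\Delta$. The local version is then to be derived from the rigidity--vanishing Theorems \ref{theo:hatrigid} and \ref{theo:varrigid}.

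First I would verify that $\delpr$ is complete. Take a generic vector $v \in L_\R$. By conditions b) and c), the set $\{\Ipr \in \sigmprn \mid v \in \Cpr(\Ipr)\}$ is obtained by partitioning $\{I \in \sigmn \mid v \in C(I)\}$ according to the subdivision, and each $\Ipr$ above such an $I$ carries weight $w^\prime(\Ipr) = w(I)$. Summing yields that the degree computed from $\delpr$ agrees with $\deg(\Delta)$ and is independent of $v$; the same argument applied to every projected multi-fan $\delpr_{\Jpr}$ gives pre-completeness of the projections, hence completeness.

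Next, since Theorem \ref{thm:BL2} applies to both completed triples, I would expand both sides and compare coefficients of $t^{-\l u,v\r}$ for $u \in L^*$. Using Remark \ref{rem:BL}, the global identity is equivalent to $\hatvarvst(\delpr,\Vpr,\xipr) = \hatvarvst(\Delta,\V,\xi)$ for every generic $v$. The structure of the character formula then suggests grouping the simplices of $\sigmpr$ by the map $\rho$: for each fixed $J \in \Sigma$ and each $u \in L^*$, it suffices to show that the contribution from $J$ on the $\Delta$-side equals the sum of contributions from $\{\Jpr \in \sigmpr \mid \rho(\Jpr) \subset J\}$ on the $\delpr$-side. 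This reduction uses the identity $\iota_{\Ipr}^*\circ \rho^*=\iota_{\rho(\Ipr)}^*$ from Lemma \ref{lemm:rhostar}, together with the identification \eqref{eq:dipr2}, so that the exponents $d_{\ipr}$ entering the character formula for $\xipr$ are determined by $\iota_J^*(\xi)$ alone. This is the local version: it depends only on the subdivision of the single cone $C(J)$ and the edge vectors contained in it.

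Finally, I would prove the local version by a rigidity--vanishing argument. Reinterpreting both sides via Proposition \ref{prop:BL2} as orbifold elliptic genera along $v$ of local (generally not complete) multi-fans supported on $C(J)$, one completes these to a pair of complete auxiliary multi-fans in such a way that the differences cancel outside $C(J)$. Specializing $\sigma = k/N$ for a large integer $N$, the resulting divisor on the auxiliary completion satisfies the hypothesis \eqref{eq:xiNdivide} with suitable $T$-Cartier $\eta$, so Theorems \ref{theo:hatrigid} and \ref{theo:varrigid} force the difference to be a constant that in fact vanishes; since this holds for infinitely many $N$, the local equality follows. The main obstacle will be carrying out the localization cleanly: the character formula encodes the global structure through $\deg(\Delta_J)$ and the summation over all $J \in \Sigma$, so extracting the contribution attached to a single cone, and constructing an auxiliary completion on which rigidity--vanishing can be legitimately invoked, is the technical heart of the argument.
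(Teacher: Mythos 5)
Your overall strategy is the paper's strategy: expand both sides via Theorem \ref{thm:BL2}, group the terms of $\hatvarst(\delpr,\Vpr,\xipr)$ by the map $\rho$, reduce to a local statement for each cone $C(I)$ (the paper's $B_n$, Theorem \ref{thm:Bn}), and kill the local difference by building an auxiliary complete multi-fan on which the divisor equals some $u\in L_\V^*\otimes\Q$ so that Theorem \ref{theo:varrigid} applies. Your preliminary step on completeness of $\delpr$ and the equality $\deg(\delpr_{\Jpr})=\deg(\Delta_{\rho(\Jpr)})$ is also exactly what the paper does. (One small redundancy: you re-derive the infinitely-many-$N$ argument, but Theorem \ref{theo:varrigid} already packages it; the paper just cites \ref{theo:varrigid} directly.)

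The genuine gap is in the sentence ``one completes these to a pair of complete auxiliary multi-fans in such a way that the differences cancel outside $C(J)$.'' That is not what happens, and this is precisely the hard part. In the paper's construction (the fans $\delstar$, $\delprstar$ built by scaling the cross-section $S$ to $S_m$ via a homothety of ratio $m$ centered at an interior lattice point and taking a barycentric triangulation of $\overline{S_m\setminus S}$, glued into the complete multi-fan $\delstartil$), the new simplices $J*(J_1\cdots J_k)$ with $(J_1\cdots J_k)\not=\emptyset$ contribute extra terms to the character formula for $\hatvarst(\delstartil,\Vstartil,\xistartil)$. These terms do \emph{not} cancel between the two sides; they only tend to $0$ as $m\to\infty$, and that limiting statement is a separate lemma ($C_n$, Lemma \ref{lemm:Cn}) whose proof uses Lemma \ref{lemm:|H|} (polynomial growth of $|H_{J*K}|$ in $m$) and Lemma \ref{lemm:infty} (exponential decay of $\frac{g(x)q^{fx}}{1-\zeta^dq^x}$). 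Meanwhile the boundary terms with $(J_1\cdots J_k)=\emptyset$ vanish by the inductive hypothesis $B_k$ for $k<n$. So the argument is a genuine double induction $B_1\to C_2\to B_2\to\cdots\to C_n\to B_n$ coupled with a limit $m\to\infty$, not a clean cancellation. Without the limiting device you have no way to relate the vanishing of $\hatvarst(\delstartil,\Vstartil,\xistartil)$ to the single target quantity $b_I(\delpr,\Vpr,\xipr;\Delta,\V,\xi)$, so the local step as sketched does not close.
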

The proof will be given in the next section by using a local
version of the theorem.

\section{Local version of invariance}
For a triple $(\Delta,\V,\xi)$ with $\xi=\sum_{i\in \sigmone}d_i\xi_i$
we
set
\[ b_J(\Delta,\V,\xi)=\sum_{h\in H_J}(-1)^{n-k}
  \zeta^{f_{J,h}(\xi)}q^{v_{J,h}}
  \prod_{i\in J}\frac1{1-\zeta^{d_i} q^{v_i}} \]
where $q^v$ for $v\in L$ is considered as a function
assigning to each $-u\in L^*$ the value $q^{\l u,v\r}$. Thus
$q^{v_{J,h}}\prod_{i\in J}\frac1{1-\zeta^{d_i} q^{v_i}}$ takes
the value
\[ q^{\l u,v_{J,h}\r}\prod_{i\in J}\frac1{1-\zeta^{d_i} q^{\l u,v_i \r}}.\]
at $-u\in  L^*$.
When $\Delta$ is a complete simplicial multi-fan
Theorem \ref{thm:BL2} says that
\begin{equation}\label{eq:BL2}
 \hatvarst(\Delta,\V,\xi)=\sum_{u\in L^*}t^{-u}\left(
 \sum_{J\in \Sigma}\deg(\Delta_J)b_J(\Delta,\V,\xi)\right)(u).
\end{equation}
In the sequel we shall write
\[ \hatvarst(\Delta,\V,\xi)=
\sum_{J\in \Sigma}\deg(\Delta_J)b_J(\Delta,\V,\xi) \]
to mean the equality \eqref{eq:BL2}.

Let $I\in \sigmn$. For a while we shall concentrate on the part of $\Sigma$
consisting of all faces of $I$, which we shall denote by $\Sigma(I)$.
Similarly
the part of $\Delta$ restricted on $\Sigma(I)$ will be denoted by
$\Delta(I)$. In this case we consider it as a fan forgetting the function
$w^\pm$.
When there is no fear of
confusion, we shall simply denote $\Sigma(I)$ by $\Sigma$ and
$(\Delta(I),\V|I,\xi|I)$ by $(\Delta,\V,\xi)$.

If $\delpr$ is a fan and $\rho:\delpr \to \Delta(I)$ is a map
satisfying a) and b), we put $\xipr=\rho^*(\xi)$.
If $\xipr=\sum_{\ipr\in \sigmprone}d_{\ipr}x_{\ipr}$, then
\[ d_{\ipr}=\sum_{i\in I}a_{\ipr i}d_i. \]
Define
\[ b_J(\delpr,\Vpr,\xipr;\Delta,\V,\xi):=
 \sum_{\Jpr, \rho(\Jpr)=J}b_{\Jpr}(\delpr,\Vpr,\xipr)
 -b_J(\Delta,\V,\xi). \]
for $J\in \Sigma(I)$, i.e., for $J\subset I$.

The following theorem can be considered as a local version of
Theorem \ref{thm:crepant1}.

\begin{theo}\label{thm:Bn}
The statement
\[ \bm{B_n}: \quad b_J(\delpr,\Vpr,\xipr;\Delta,\V,\xi)=0 \quad
\text{for $\dim L=n$ and for all $J\in \Sigma$} \]
holds for $n\geq 1$.
\end{theo}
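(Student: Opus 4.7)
My plan is to proceed by induction on $n = \rank(L)$, with $B_n$ as the inductive hypothesis.

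For the base case $n=1$, $\Sigma(I)$ consists only of $\{*, I\}$ with $I = \{i\}$, and any subdivision of the single ray $C(i)$ simply replaces $v_i$ by a positive rational multiple $v_{\ipr} = a v_i$ with corresponding $d_{\ipr} = a d_i$. The term $b_*$ equals $(-1)^n$ on both sides trivially. For $b_I$, writing $m = [L : \Z v_i]$ and applying the telescoping identity $\sum_{k=0}^{m-1}\beta^k = (1-\beta^m)/(1-\beta)$ with $\beta = \zeta^{d_i/m} q^u$ collapses $b_I(\Delta,\V,\xi)$ to $1/(1-\zeta^{d_i/m} q^u)$; the parallel computation for $\delpr$ yields the same expression.

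For the inductive step, assuming $B_k$ holds for all $k < n$, I would globalize and apply the rigidity/vanishing theorems, then peel off faces using the inductive hypothesis. First, extend $\Delta(I)$ to a complete simplicial multi-fan $\tilde{\Delta}$ by adjoining cap cones outside $C(I)$, and lift $\rho$ to a birational morphism $\tilde{\rho}:\tilde{\delpr}\to\tilde{\Delta}$ that is the identity on the cap. Then select $\tilde{\xi}$ extending $\xi$ satisfying the hypothesis of Theorem \ref{theo:varrigid} (i.e., $\tilde{\xi}$ a nonzero element of $L_{\tilde{\V}}^*\otimes\Q$); this requires a preliminary reduction exploiting the rational dependence of $b_J$ on the coefficients $d_i$. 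By Theorem \ref{theo:varrigid} both $\hatvarst(\tilde{\Delta},\tilde{\V},\tilde{\xi})$ and $\hatvarst(\tilde{\delpr},\tilde{\Vpr},\tilde{\rho}^*\tilde{\xi})$ vanish identically. Expanding via the character formula \eqref{algn:BL} and canceling the cap contributions (identical on both sides since $\tilde{\rho}$ is the identity there) yields the master identity
\[
\sum_{J \in \Sigma(I)} \deg(\tilde{\Delta}_J) \cdot b_J(\delpr,\Vpr,\xipr;\Delta,\V,\xi) = 0.
\]
To extract $b_J = 0$ for each $J$ individually, I would induct on $|J|$: the case $J = *$ is trivial; for $J = \{i\}$ the identity reduces to the $n=1$ base case after factoring out $(-1)^{n-1}$; and for general $J \subsetneq I$ with $|J| = k$ satisfying $0 < k < n$, the restriction of $\rho$ to a subdivision of $C(J)$ is a local subdivision problem over $L_J$ of rank $k$, and applying $B_k$ gives $b_J = 0$ (after reconciling the ambient sign $(-1)^{n-k}$ with the sub-problem's sign). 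Cascading through all $J \subsetneq I$, only the $J = I$ term survives in the master identity, yielding $\deg(\tilde{\Delta}_I) \cdot b_I = 0$; choosing the completion so that $\deg(\tilde{\Delta}_I) > 0$ then gives $b_I = 0$.

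The main obstacle will be the inductive reduction for $J \subsetneq I$: although $b_J$ is essentially local to $J$ in its dependence on $\{v_i, d_i\}_{i \in J}$, its evaluation is at $u \in L^*$ (not $L_J^*$), so reducing to an instance of $B_k$ requires decomposing the character sum according to the quotient $L^* \to L_J^*$ (equivalently, projecting to $L/L_J$). A further delicate point will be the choice of completion and $\tilde{\xi}$ so that Theorem \ref{theo:varrigid} applies for general $\xi$; this can be handled by first establishing the special case $\xi \in L_\V^* \otimes \Q$ and then extending to arbitrary $\xi$ by the rational dependence of $b_J$ on the $d_i$.
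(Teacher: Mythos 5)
Your base case and your big-picture idea — globalize to a complete multi-fan, invoke the vanishing theorem (Theorem \ref{theo:varrigid}), expand via the character formula \eqref{algn:BL}, and peel off lower-dimensional faces with the inductive hypothesis — are all elements of the paper's argument. However, there is a genuine gap at the step where you complete $\Delta(I)$ and $\delpr(I)$ by ``adjoining cap cones outside $C(I)$'' with $\tilde\rho$ the identity on the cap.

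That step cannot be carried out when $\rho$ refines the boundary $\partial\Sigma(I)$ (which is the general case). Any cap cone of $\tilde\Delta$ adjacent to $\partial C(I)$ shares a facet with a face of $I$; since in $\tilde\Delta$ that face is not subdivided, the cap cone cannot be subdivided along it either, so the cap of $\tilde\Delta$ meets $\partial C(I)$ in the coarse face structure of $\Sigma(I)$. On the $\tilde\delpr$ side, however, the cap must meet $\partial C(I)$ in the refined structure of $\delpr$, so the cap of $\tilde\delpr$ is forced to be a proper subdivision of the cap of $\tilde\Delta$ along $\partial C(I)$, not the identity. Consequently the cap contributions do not cancel, and among them one finds top-dimensional ($k=n$) differences $b_J$ — exactly the statement $B_n$ being proved, which makes the argument circular. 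Your induction on $|J|$ only dispenses with proper faces of $I$ via $B_k$, $k<n$; the troublesome cap terms are top-dimensional. The paper resolves this precisely by \emph{not} completing outward: it inflates the cross-section $S=C(I)\cap W$ to $S_m$, triangulates the annulus $\overline{S_m\setminus S}$ barycentrically so that $\delstar$ and $\delprstar$ share an identical \emph{outer} boundary (independent of the inner subdivision), and glues them with opposite signs $w=\pm1$ into a complete multi-fan $\delstartil$. This unavoidably introduces ``shell'' terms $b_{J*(J_1\cdots J_k)}$ with $K=(J_1\cdots J_k)\neq\emptyset$ that do \emph{not} vanish by the inductive hypothesis; the paper instead proves the intermediate statement $C_n$ (Lemma \ref{lemm:Cn}) that these terms tend to $0$ as $m\to\infty$, using the growth estimate $|H_{J*K}|\leq Cm^n$ (Lemma \ref{lemm:|H|}) against the exponential decay of $q^{\langle u,v_{J_k}\rangle f}$ in $m$ (Lemma \ref{lemm:infty}), with the induction chain running $B_1\to C_2\to B_2\to\cdots\to C_n\to B_n$. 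This entire limiting mechanism is absent from your proposal and is the real content of the inductive step; without it, or some replacement, the argument does not close.
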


The case $n=1$ is easy. In this case $\V$ and $\Vpr$ consist of one
vector $v_1$ and $v_{1'}$ respectively. They are of the form
$v_1=a_1v$ and $v_{1'}=a_{1'}v$ with $a_1, a_{1'}\in \Z_{>0}$ and $v$
the primitive integral vector in $C(1)=\Cpr(1')$.
Moreover $H_{1}\cong \Z/a_1$ and $f_{h,1}=\bar{h}/a_1$ for $h\in \Z/a_1$
where $\bar{h}\in \Z$ is the representative of $h$ with $0\leq\bar{h}<a_1$.
Similarly $H_{1'}\cong \Z/a_{1'}$ and $f_{h',1'}=\bar{h}'/a_{1'}$ for
$h'\in \Z/a_{1'}$.  Since $v_{1'}=\frac{a_{1'}}{a_1}v_1$, we have
$\frac{d_{1'}}{a_{1'}}=\frac{d_1}{a_1}$ by definition. We denote this
last number by $d$. Then
\[ b_{\{1\}}(\Delta,\V,\xi)=
\frac{\sum_{\bar{h}=0}^{a_1-1}\zeta^{\frac{\bar{h}}{a_1}d_1}
q^{\frac{\bar{h}}{a_1}v_1}}
{1-\zeta^{d_1}q^{v_1}}
=\frac{1}{1-\zeta^{\frac{1}{a_1}d_1}q^{\frac{1}{a_1}v_1}}
=\frac{1}{1-\zeta^dq^v}.
\]
Similarly we have
\[ b_{\{1'\}}(\delpr,\Vpr,\xipr)=\frac{1}{1-\zeta^dq^v}.
\]
Hence
\[ b_{\{1\}}(\delpr,\Vpr,\xipr:\Delta,\V,\xi)=
 b_{\{1'\}}(\delpr,\Vpr,\xipr)- b_{\{1\}}(\Delta,\V,\xi)=0. \]
Clearly
\[b_{\emptyset}(\delpr,\Vpr,\xipr:\Delta,\V,\xi)=1-1=0. \]
This proves $B_1$.

To illustrate the proof for general case we first give a proof of $B_n$ for
the following special case, namely the case where $v_{\kappa(i)}=v_i$ for
all $i\in \sigmone$ and $\rho(\ipr)=I$ for all
$\ipr \in \sigmprone\setminus \kappa(\sigmone)$.
In this case we introduce a multi-fan $\delstartil$ defined in the
following way. Note that $\kappa$ induces an injective simplicial map
$\kappa:\partial\Sigma=\Sigma\setminus \sigmn\to \sigmpr$ compatible
with $C$ and $\Cpr$.
This simplicial map is realized as the identity map on the boundary of
the cone $C(I)$. A simplicial set $\sigmstartil$ is defined as the sum
of $\sigmpr$ and $\Sigma$ glued along $\partial\Sigma$
via $\kappa$. Note that $\sigmstartil^{(1)}=\sigmprone$.
$C$ and $\Cpr$ determines a cone structure $\Cstartil:\sigmstartil\to L_\R$.
We define the function $w^\pm$ for $\sigmstartil$ as follows:
\begin{align*}
 w^+(I_k)=\begin{cases} 1, & I_k\in \sigmprn, \\
                      0, & I_k\in \sigmn, \text{i.e., $I_k=I$}.
\end{cases}
 \quad
 w^-(I_k)=\begin{cases} 0, & I_k\in \sigmprn, \\
                      1, & I_k=I. \end{cases}
\end{align*}
Then
\begin{align*}
   w(I_k)=\begin{cases}+1, &  \text{for $I_k\in\sigmprn$,} \\
                     -1, &  \text{for $I_k=I$.} \end{cases}
\end{align*}
The triple $(\sigmstartil,\Cstartil,w^\pm)$ defines a simplicial multi-fan
$\delstartil$.

We claim that $\delstartil$ is complete and
\begin{equation}\label{eq:startil}
\deg((\delstartil)_J)=\begin{cases}
      +1, & J\in \sigmpr\setminus \partial\Sigma, \\
      -1, & J\in \sigmn,\ \text{i.e., $J=I$}, \\
      \ \ 0, & J\in \partial\Sigma.
 \end{cases}
\end{equation}

In fact take a generic vector $v$ in $L_\R$ and consider
%Deleted to prevent overfull (Taras)
%the number
$d_v(J)=\sum_{I_k\in S_v(J)}w(I_k)$ for $J\in \sigmstartil^{(n-1)}$
where $S_v(J)$ is defined by \eqref{eq:Sv}. If $J$ is contained in
$\sigmpr\setminus \partial\Sigma$ then
$\Cpr(J)$ is not contained in the boundary of $C(I)$ and is two-sided
in $L_\R$. From this it follows easily that $d_v(J)$ is equal to $1$
independently of generic $v$ . This fact shows that $(\delstartil)_J$
is complete and $\deg((\delstartil)_J)=+1$.
On the other hand, if $J$ is contained in $\partial\Sigma$, then
$J$ is a face of $I$ and of exactly one simplex $I_k$ in $\sigmprn$.
Noting that
$w(I)=-1$ and $w(I_k)=+1$ we see that $d_v(J)=0$ independently of
generic $v$. Hence $(\delstartil)_J$
is complete and $\deg((\delstartil)_J)=0$ in this case. Since
$(\delstartil)_J$ is complete for all $J\in \sigmstartil^{(n-1)}$,
the multi-fan $\delstartil$ is complete. Other statements concerning
$(\delstartil)_J$ for $J\in \sigmstartil$ of arbitrary dimensions
can be proved in a similar way.

We define $\Vstartil$ and $\xistartil$ to be equal to $\Vpr$
and $\xipr$ respectively. In view of the assumption
$v_{\kappa(i)}=v_i$ these definitions make sense.
Then we have
\[\xistartil=
 \!\!\!\sum_{i_*\in \sigmstartil^{(1)}}d_{i_*}(\tilde{x}_*)_{i_*}
 =\!\!\!\sum_{\ipr\in \sigmprone}d_{\ipr}x'_{\ipr}
 =\rho^*\Bigl(\sum_{i\in\sigmone}d_ix_i\Bigr)=\rho^*(u)=u
 \in H_T^2(\delstartil,\Vstartil)\otimes\Q
 \]
for $u=u^I(\xi)=\sum_{i\in \sigmone}d_i\u\in L_\V^*$.
We can apply Theorem \ref{theo:varrigid} to obtain
\[ \hatvarst(\delstartil,\Vstartil,\xistartil)=0. \]
By
%Deleted to prevent overfull (Taras)
%using
\eqref{eq:BL2} and \eqref{eq:startil} we have
$\hatvarst(\delstartil,\Vstartil,\xistartil)
=\sum_{u\in L^*}t^{-u}b_I(\delpr,\Vpr,\xipr;\Delta,\V,\xi)(u)$ and finally
\[ b_I(\delpr,\Vpr,\xipr;\Delta,\V,\xi)=0. \]
In this case
\[ b_J(\delpr,\Vpr,\xipr;\Delta,\V,\xi)=0 \]
trivially holds for each $J\in\sigmk$ with $k<n$.
Thus $B_n$ is true in this case.

The proof of general case proceeds by induction starting from $B_1$:
\[ B_1\to C_2\to B_2\to \cdots \to B_{n-1}\to C_n\to B_n\to \cdots .\]
The statement $C_n$ is described by using multi-fans $\delprstar$
and $\delstar$ which will be constructed below from $\delpr$ and
$\Delta$ respectively.

Let $W$ be a rational affine hyperplane in $L_\R$ intersecting
$C(I)$ transversally and let $S=W\cap C(I)$. We assume that there
is an integral point $v\in L$ in the interior of $S$. Let $S_m$ be
the image of the homothety $\psi$ of ratio $m$ centered at $v$
where $m$ is a positive integer.  $S$ and $S_m$ are geometric
simplices. We give a triangulation of $\overline{S_m\setminus S}$
as follows. First the boundary $\partial S$ is triangulated in the
standard way, i.e., its simplices are of the form $S_J=S\cap C(J)$
for $J\in \Sigma(I)^{(k)},0<k<n$. The boundary $\partial S_m$ is
triangulated by the barycentric subdivision, i.e., its simplices
are of the form $(b_{J_1}\ldots b_{J_k})$ with $J_1\subsetneqq
\ldots \subsetneqq J_k\subsetneqq I$. Here $b_{J}$ is the
barycenter of the simplex $\psi(S\cap C(J))$. Simplices between
$\partial S$ and $\partial S_m$ are of the form
$S_J*(b_{J_1}\ldots b_{J_k})$ with $J\subset J_1\subsetneqq
\ldots\subsetneqq J_k\subsetneqq I$ where $*$ denotes the join. A
triangulation of $S_m$ is induced from this triangulation of
$\overline{S_m\setminus S}$ together with the standard
triangulation of $S$. By projecting this into the cone over $S_m$
centered at the origin we get a fan $\delstar=(\sigmstar,\Cstar)$.
The simplicial set $\sigmstar$ can be described in the form
\[ \sigmstar=\Sigma\cup\{J*(J_1\cdots J_k)\mid J\subset J_1\subsetneqq
\ldots \subsetneqq J_k \subsetneqq I\}. \]
The case $(J_1\cdots J_k)=\emptyset$ is included. The simplex
$(J_1\cdots J_k)$
will be denoted by $K$ for simplicity. Then part of
$\sigmstar$ with $K=\emptyset$
is the subcomplex of $\Sigma$ corresponding to the boundary of $I$.
Part of $\sigmstar$ with $J=\emptyset$ will be denoted by $\partial\sigmstar$.
It is the set of sequences $(J_1\cdots J_k)$ such that
$J_1\subsetneqq \ldots \subsetneqq J_k\subsetneqq I$.
The cone $\Cstar(J*(J_1\cdots J_k))$ is the one generated by
$S_J*(b_{J_1}\ldots b_{J_k})$.

The cone $C(I)$ is triangulated by $\{\Cpr(\Jpr)\}$ with $\rho(\Jpr)\subset I$
and the simplex $S$ is triangulated accordingly.
If we replace the standard triangulation of $S$ by this triangulation
in the construction of $\delstar$ above and $S_J*(b_{J_1}\ldots b_{J_k})$
by $S_{\Jpr}*(b_{J_1}\ldots b_{J_k})$ with $\rho(\Jpr)\subset J_1$,
then we obtain another fan $\delprstar=(\sigmprstar,\Cprstar)$ where
\[ \sigmprstar=\sigmpr\cup\{\Jpr*(J_1\cdots J_k)\mid
\rho(\Jpr)\subset
J_1\subsetneqq \ldots\subsetneqq J_k \subsetneqq I\}\}. \]
Part of $\sigmprstar$ with $\Jpr=\emptyset$ will be denoted by
$\partial\sigmprstar$. It is identified with $\partial\sigmstar$ in an
obvious way.

We further assume that all the $b_J\in S_m$ are integral points.
This is
possible by choosing $m$ suitably. Moreover such $m$ can be taken
arbitrarily large. We put $v_J=b_J$. Then we define $\Vstar$ to be the sum
$\V\sqcup \{v_J\}$ where $(J)$ ranges over $(\partial\sigmstar)^{(1)}
\subset \sigmstar^{(1)}$.
Putting $u^I(\xi)=\sum_{i\in I}d_i\u$ define the number $d_J$ by
\begin{equation}\label{eq:dJ}
 d_J=\l u^I(\xi), v_J\r,
\end{equation}
and the $\Q$-divisor $\xistar$ by
\[ \xistar=\sum_{i\in \sigmone}d_ix_i+\sum_{(J)\in b\sigmone}d_Jx_J, \]
where $x_J$ is the basis element in $H_T^2(\delstar,\Vstar)$
corresponding to $(J)\in (\partial\sigmstar)^{(1)}$.

Similarly we define
\[ \Vprstar:=\Vpr\sqcup \{v_J\}_{(J)\in b\sigmone},\quad \xiprstar:=
 \sum_{\ipr\in\sigmprone}d_{\ipr}x_{\ipr}+\sum_{(J)\in b\sigmone}d_Jx_J. \]

The map $\rho:\delpr \to \Delta$ induces a map
$\rho:\delprstar \to \delstar$ by
\[ \rho(\Jpr*(J_1\cdots J_k))=\rho(\Jpr)*(J_1\cdots J_k)). \]
Then it is clear that
\[ \rho^*(\xistar)=\xiprstar. \]
We put
\[ b_{J*(J_1\cdots J_k)}(\delprstar,\Vprstar,\xiprstar)=
 \sum_{\Jpr;\rho(\Jpr)=J}b_{\Jpr*(J_1\cdots J_k)}
(\delprstar,\Vprstar,\xiprstar),\]
and
\[b_{J*(J_1\cdots J_k)}(\delprstar,\Vprstar,\xiprstar;
\delstar,\Vstar,\xistar)=b_{J*(J_1\cdots J_k)}(\delprstar,\Vprstar,\xiprstar)
-b_{J*(J_1\cdots J_k)}(\delstar,\Vstar,\xistar) \]
as before.
We are ready now for the statement $C_n$:
\vspace*{0.3cm}\\
$C_n$ \hspace{1cm}
$b_{J*(J_1\cdots J_k)}(\delprstar,\Vprstar,\xiprstar;
\delstar,\Vstar,\xistar)(u)\longrightarrow 0$
as $m\longrightarrow +\infty$  \\
\hspace*{1.6cm} for $\dim L=n$ and for all $J*(J_1\cdots J_k)$ with
$(J_1\cdots J_k)\not=\emptyset$.

\begin{lemm}\label{lemm:Cn}
The statement $C_n$ holds for $n\geq 2$.
\end{lemm}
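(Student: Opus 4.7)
The plan is to reduce $C_n$ to the inductive hypothesis $B_{|J|}$, which is available since $J\subset J_1\subsetneq I$ forces $|J|\le n-1$. Starting from the product form in the Note after Theorem \ref{thm:BL2},
\[
b_{J*K}(\delstar,\Vstar,\xistar)(u)=\sum_{h\in H_{J*K}}(-1)^{n-|J|-k}\prod_{i\in J}\frac{(\zeta^{d_i}q^{\langle u,v_i\rangle})^{f_{h,i}}}{1-\zeta^{d_i}q^{\langle u,v_i\rangle}}\prod_{j=1}^k\frac{(\zeta^{d_{J_j}}q^{\langle u,v_{J_j}\rangle})^{f_{h,J_j}}}{1-\zeta^{d_{J_j}}q^{\langle u,v_{J_j}\rangle}},
\]
I would first analyze the finite group $H_{J*K}=L_{J*K}/\langle\{v_i\}_{i\in J}\cup\{v_{J_j}\}_{j=1}^k\rangle$. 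Since $v_{J_j}=b_{J_j}=m\tilde v_{J_j}$ scales linearly with $m$, for large enough generic $m$ the natural map $H_J\to H_{J*K}$ is injective and $H_{J*K}$ admits a decomposition (via a suitable unimodular change of coset representatives) so that $f_{h,i}=f_{h_J,i}$ for $i\in J$ depends only on the $H_J$-part, while the outer coordinates $f_{h,J_j}$ range independently over $\{0,1/m,\dots,(m-1)/m\}$ in each $v_{J_j}$-direction.

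I would then perform the sum over the outer coordinates using the geometric-series identity $\sum_{c=0}^{m-1}x^{c/m}=(1-x)/(1-x^{1/m})$ applied once per $v_{J_j}$-direction. With $v_{J_j}=m\tilde v_{J_j}$ and $d_{J_j}=m\tilde d_{J_j}$ where $\tilde d_{J_j}=\langle u^I(\xi),\tilde v_{J_j}\rangle$, this telescopes the combination of $(1-\zeta^{d_{J_j}}q^{\langle u,v_{J_j}\rangle})^{-1}$ and the geometric sum into $(1-\zeta^{\tilde d_{J_j}}q^{\langle u,\tilde v_{J_j}\rangle})^{-1}$. After absorbing the sign $(-1)^{-k}=(-1)^k$ into the outer factors, the result is the factorization
\[
b_{J*K}(\delstar,\Vstar,\xistar)(u)=b_J(\Delta,\V,\xi)(u)\cdot\prod_{j=1}^{k}\frac{-1}{1-\zeta^{\tilde d_{J_j}}q^{\langle u,\tilde v_{J_j}\rangle}}+R(m,u),
\]
where $R(m,u)$ collects the residual contributions from classes with some $f_{h,J_j}>0$ that do not fit the clean telescoping pattern. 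For $u$ in the chamber $\{\langle u,\tilde v_{J_j}\rangle>0\text{ for all }j\}$ each such term carries a factor $(q^{\langle u,v_{J_j}\rangle})^{f_{h,J_j}}$ with $\langle u,v_{J_j}\rangle\to+\infty$, so $R(m,u)\to 0$ as $m\to\infty$. The parallel factorization holds for $\sum_{\Jpr:\rho(\Jpr)=J}b_{\Jpr*K}(\delprstar,\Vprstar,\xiprstar)(u)$ with an identical $K$-factor, since the outer vectors $v_{J_j}$ are common to both sides, and with $b_J(\Delta,\V,\xi)$ replaced by $\sum_{\Jpr:\rho(\Jpr)=J}b_{\Jpr}(\delpr,\Vpr,\xipr)$.

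Taking the difference the common $K$-factor cancels out, leaving
\[
b_{J*(J_1\cdots J_k)}(\delprstar,\Vprstar,\xiprstar;\delstar,\Vstar,\xistar)(u)=b_J(\delpr,\Vpr,\xipr;\Delta,\V,\xi)(u)\cdot\prod_{j=1}^{k}\frac{-1}{1-\zeta^{\tilde d_{J_j}}q^{\langle u,\tilde v_{J_j}\rangle}}+o(1).
\]
The leading factor vanishes by the inductive hypothesis $B_{|J|}$ applied to the sub-problem obtained by restricting $\rho$ to the star of $J$ (a problem in a lattice of rank $|J|\le n-1$), so the full expression tends to $0$ as $m\to\infty$ on the stated chamber; the conclusion for arbitrary $u\in L^*$ then follows from the rational dependence on $q^{\langle u,v\rangle}$. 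The main obstacle will be the second step: making the decomposition $H_{J*K}\cong H_J\times\prod_j(\text{cyclic})$ precise in a way that aligns with the $v_{J_j}$-directions, executing the telescoping cleanly, and verifying the uniform smallness of the error $R(m,u)$; the freedom to choose $m$ arbitrarily large (and generically) guaranteed by the construction is essential for both the lattice decomposition and the exponential decay of $R$.
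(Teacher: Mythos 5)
Your proposal hinges on two structural claims that the paper neither makes nor needs, and both are false as stated.

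First, you assert $v_{J_j}=m\tilde v_{J_j}$ and $d_{J_j}=m\tilde d_{J_j}$, so that the geometric-series telescoping $\sum_{c=0}^{m-1}x^{c/m}=(1-x)/(1-x^{1/m})$ converts the outer factors into rank-one factors $(1-\zeta^{\tilde d_{J_j}}q^{\langle u,\tilde v_{J_j}\rangle})^{-1}$. But in the paper's construction the barycenters are $v_J=b_J=v_0+m(\bar v_J-v_0)$, where $v_0$ is an interior integral point of $S=W\cap C(I)$ (hence $v_0\ne 0$ by strong convexity); this is affine-linear in $m$, not a pure multiple. So $\langle u,v_{J_j}\rangle/m$ is not in general integral, $d_{J_j}/m$ is not a well-defined ``$\tilde d_{J_j}$'' of the expected form, and the telescoping identity does not apply.

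Second, and more seriously, you assume a product decomposition $H_{J*K}\cong H_J\times\prod_j(\text{cyclic})$ in which the $J$-coordinates $f_{h,i}$ depend only on the $H_J$-part and the outer coordinates $f_{h,(J_j)}$ run independently over $\{0,1/m,\dots,(m-1)/m\}$. The quotient $L_{J*K}/L_{J*K,\V_*}$ does not split this way in general: already in rank $2$, if $J=\{1\}$ with $v_1=(1,0)$ and the outer vector is $(a,m)$, representatives are $(0,j)$ and the $J$-coordinate of the normalized representative is $-aj/m\bmod 1$, which depends on the outer index $j$ even though $H_J=0$. Without this decomposition the claimed factorization
$b_{J*K}=b_J\cdot\prod_j(\cdot)+R$ is not available, and the reduction to $B_{|J|}$ collapses.

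The paper avoids both issues by peeling off only the \emph{last} outer coordinate $(J_k)$. When $\dim J_k<n-2$ the simplex $J*K$ already has rank $<n$ and $B_{|J*K|}$ gives exact vanishing. When $\dim J_k=n-2$ the sum is split according to whether $f_{h,(J_k)}=0$ or not: the part with $f_{h,(J_k)}=0$ uses the exact group-theoretic identity $H^0_{\Jpr*K}=H_{\Jpr*K^0}$ (restricting one $f$-coordinate to zero cuts the group down to that of a shorter chain), which makes $b_0$ a copy of $b_{J*K^0}(\cdots;\cdots)$ and zero by the inductive $B$-hypothesis; the remaining part $b_1$ is bounded by $C'm^n$ times the single factor $(\zeta^{d_{J_k}}q^{\langle u,v_{J_k}\rangle})^{f_{h,(J_k)}}/(1-\cdots)$, which decays exponentially as $m\to\infty$ because $\langle u,v_{J_k}\rangle$ is linear in $m$ and $0<f_{h,(J_k)}<1$ (Lemmas \ref{lemm:|H|} and \ref{lemm:infty}). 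No decomposition of $H_{J*K}$ into a product, no telescoping, and no chamber restriction on $u$ is needed.

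Your overall architecture (inductive reduction to $B$-statements, plus an $m\to\infty$ estimate) matches the paper, but the specific mechanism you use to separate the inner and outer variables does not hold, so as written the argument has a genuine gap. To repair it you would essentially need to abandon the global splitting of $H_{J*K}$ and instead argue one outer coordinate at a time, which brings you back to the paper's own $b_0/b_1$ dichotomy and the observation $H^0_{\Jpr*K}=H_{\Jpr*K^0}$.
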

\begin{proof}
We assume $B_k$ for $k<n$ and deduce $C_n$. The simplex
$(J_1\cdots J_k)$ will be denoted by $K$ as before. Its vertices are
written as $(J_1),\ldots ,(J_k)$ to distinguish from the simplices
$J_1, \ldots, J_k$ in $\Sigma$. In case $\dim J_k<n-2$ the dimension of
$J*K$ is less than $n-1$. Hence we can apply the inductive
assumption for $\rho:\sigmpr(J*K)\to \Sigma(J*K)$ where
$\sigmpr(J*K)=\{\Jpr_*\in \sigmprstar \mid
\rho(\Jpr_*)\subset J*K\}$ and $\Sigma(J*K)=
\{J_*\mid J_*\subset J*K\}$. It follows that
\[ b_{J*K}(\delprstar,\Vprstar,\xiprstar;\delstar,\Vstar,\xistar)=0. \]

Suppose next that $\dim J_k=n-2$, i.e. $J_k\in \Sigma^{(n-1)}$. Put
\[ H_{J*K}^0=\{h\in H_{J*K}\mid f_{h,(J_k)}=0\}.   \]
It is a subgroup of $H_{J*K}$. Similarly we put
\[ H_{\Jpr*K}^0=\{h\in H_{\Jpr*K}\mid f_{h,(J_k)}=0\}.   \]
Write
\[ b_{J*K}(\delprstar,\Vprstar,\xiprstar;\delstar,\Vstar,\xistar)(u)=b_0+b_1, \]
where
%font decreased to prevent overfull (Taras)
\[ \begin{split}b_0=&\Biggl(\sum_{\Jpr;\rho(\Jpr)=J}
 \sum_{h\in H_{\Jpr*K}^0}(-1)^{n-|\Jpr*K|}
 \prod_{\jpr\in \Jpr}{\textstyle\frac{(\zeta^{d_{\jpr}}q^{\l u,v_{\jpr}\r})^{f_{h,\jpr}}}
 {1- \zeta^{d_{\jpr}}q^{\l u,v_{\jpr}\r}}}
 \prod_{i<k}{\textstyle\frac{(\zeta^{d_{J_i}}q^{\l u,v_{J_i}\r})^{f_{h,(J_i)}}}
 {1- \zeta^{d_{J_i}}q^{\l u,v_{J_i}\r}} }\\
 &-\sum_{h\in H_{J*K}^0}(-1)^{n-|J*K|}
 \prod_{j\in J}{\textstyle\frac{(\zeta^{d_j}q^{\l u,v_j\r})^{f_{h,j}}}
 {1- \zeta^{d_j}q^{\l u,v_j\r}}}
 \prod_{i<k}{\textstyle\frac{(\zeta^{d_{J_i}}q^{\l u,v_{J_i}\r})^{f_{h,(J_i)}}}
 {1- \zeta^{d_{J_i}}q^{\l u,v_{J_i}\r}}}\Biggr){\textstyle
 \frac{1}{1-\zeta^{d_{J_k}}q^{\l u,v_{J_k}\r}}},
 \end{split}
\]
and
\[ \begin{split} &b_1=\Biggl(\sum_{\Jpr;\rho(\Jpr)=J}
 \sum_{h\not\in H_{\Jpr*K}^0}(-1)^{n-|\Jpr*K|}
 \prod_{\jpr\in \Jpr}{\textstyle\frac{(\zeta^{d_{\jpr}}q^{\l u,v_{\jpr}\r})^{f_{h,\jpr}}}
 {1- \zeta^{d_{\jpr}}q^{\l u,v_{\jpr}\r}}}
 \prod_{i<k}{\textstyle\frac{(\zeta^{d_{J_i}}q^{\l u,v_{J_i}\r})^{f_{h,(J_i)}}}
 {1- \zeta^{d_{J_i}}q^{\l u,v_{J_i}\r}} }\\
 & -\sum_{h\not\in H_{J*K}^0}(-1)^{n-|J*K|}
 \prod_{j\in J}{\textstyle\frac{(\zeta^{d_j}q^{\l u,v_j\r})^{f_{h,j}}}
 {1- \zeta^{d_j}q^{\l u,v_j\r}} }
 \prod_{i<k}{\textstyle\frac{(\zeta^{d_{J_i}}q^{\l u,v_{J_i}\r})^{f_{h,(J_i)}}}
 {1- \zeta^{d_{J_i}}q^{\l u,v_{J_i}\r}} }\Biggr)
 {\textstyle\frac{(\zeta^{d_{J_k}}q^{\l u,v_{J_k}\r})^{f_{h,(J_k)}}}
 {1-\zeta^{d_{J_k}}q^{\l u,v_{J_k}\r}} }.
 \end{split}
\]

The subgroup $H_{\Jpr*K}^0$ coincides with
$H_{\Jpr*K^0}$ where $K^0=(J_1\cdots J_{k-1})$. It follows that
the term inside the parenthesis in the expression for $b_0$ is nothing
but $b_{J*K^0}(\delprstar,\Vprstar,\xiprstar;\delstar,\Vstar,\xistar)$
and is equal to zero by induction assumption.
Thus $b_0=0$.

In order to estimate $b_1$ we first remark the following
\begin{lemm}\label{lemm:|H|}
The orders of $H_{J*K}=H_{J*K,\V_*}$ for $J\in \Sigma$ and
$H_{\Jpr*K}=H_{\Jpr*K,\V_*}$ for $\Jpr\in\sigmpr$
are bounded by $Cm^n$ for some constant $C$ depending only on
$\Delta, W$ and $v_0$.
\end{lemm}
Proof of Lemma. If $J*K$ is contained in $J_1*K_1$ then $|H_{J*K}|\leq
|H_{J_1*K_1}|$. So we may assume $J*K\in \Sigma_*^{(n)}$. Then
$H_{J*K}=L/L_{J*K,\V_*}$.
If we choose a basis of $L$ we can express $|H_{J*K}|$ as the determinant of
a matrix $A$ of degree $n$. Note that $v_{J_i}=v_0+m(\bar{v}_i-v_0)$ where
$\bar{v}_i=C(i)\cap W$. It follows that components of $A$ are linear
functions of $m$ with coefficients depending only on $\Delta, W$ and $v_0$.
Hence there exists a constant $C$ such that $|H_{J*K}|\leq Cm^n$.
Similarly $|H_{\Jpr*K}|\leq Cm^n$ for some constant $C$.
This proves Lemma \ref{lemm:|H|}.

\begin{lemm}\label{lemm:infty}
Let $g(x)$ be a polynomial in $x$.
The function $\frac{g(x)q^{fx}}{1-\zeta^dq^x}$ in real variable $x$
tends to $0$ as $x$ tends to $\pm\infty$ provided $0<f<1$, where $d\in \Q$.
It is bounded for $f=0$ and $g(x)=\text{constant}$.
\end{lemm}
In fact, recall that $\tau=a+b\img$ with $b>0$. Then
\[ \frac{g(x)q^{fx}}{1-\zeta^dq^x}=\frac{g(x)\alpha_1(x)e^{-2\pi bfx}}
{1-\zeta^d\alpha_2(x)e^{-2\pi bx}} \]
with $|\alpha_i(x)|=1,\ i=1,2$.
If $0<f<1$ then the right-hand side tends to $0$ as $x$ tends to $\pm\infty$.
If $f=0$ and $g(x)=\text{constant}$ it is bounded. This finishes the proof
of Lemma \ref{lemm:infty}.

In view of Lemma \ref{lemm:|H|} and Lemma \ref{lemm:infty} the absolute value
of the term inside the parenthesis in the expression for $b_1$ is bounded by
$\Cpr m^n$ with some constant $\Cpr$. Hence
\[ |b_1|\leq \Cpr m^n
|\frac{(\zeta^{d_{J_k}}q^{\l u,v_{J_k}\r})^{f_{h,(J_k)}}}
 {1-\zeta^{d_{J_k}}q^{\l u,v_{J_k}\r}}|. \]
$\l u,v_{J_k}\r$ is a linear function of $m$ and its absolute value
tends to $\infty$ when $m$ tends to $\infty$.
Then $b_1$ and hence $b_{J*K}(\delprstar,\Vprstar,\xiprstar;
\delstar,\Vstar,\xistar)(u)$ tend to $0$ by Lemma \ref{lemm:infty}.
This finishes the proof of Lemma \ref{lemm:Cn}.
\end{proof}

We now prove that $C_n$ implies $B_n$ assuming $B_k$ for $k<n$. For that
purpose we introduce
the following multi-fan $\delstartil=(\sigmstartil,\Cstartil,w^\pm)$.
$\sigmstartil$ is the union of $\sigmprstar$ and $\sigmstar$ glued
along $\partial\sigmstar=\partial\Sigma'_*$, and
$\Cstartil\vert\sigmprstar=\Cprstar$ and
$\Cstartil\vert\sigmstar=\Cstar$.

We define the function $w^\pm$ for $\sigmstartil$ as follows:
\begin{align*}
 w^+(J_*)=\begin{cases}  1, & J_*\in (\sigmprstar)^{(n)}, \\
           0,  & J_*\in \sigmstar^{(n)}, \end{cases}
\quad
 w^-(J_*)=\begin{cases} 0, &J\in (\sigmprstar)^{(n)}, \\
          1,    &J_*\in \sigmstar^{(n)}. \end{cases}
\end{align*}
Then
\begin{align*}
   w(J_*)=\begin{cases} 1, &  \text{for $J_*\in(\sigmprstar)^{(n)}$,} \\
           -1, &  \text{for $J_*\in\sigmstar^{(n)}$.} \end{cases}
\end{align*}
The triple $(\sigmstartil,\Cstartil,w^\pm)$ defines a simplicial multi-fan
$\delstartil$.

It can be proved in a similar way to \eqref{eq:startil}
that $\delstartil$ is complete and the following equalities hold.
\begin{equation}\label{eq:startil2}
\deg((\delstartil)_{J_*})=\begin{cases}
      +1, & J_*\in \sigmprstar\setminus \partial\sigmstar, \\
      -1, & J_*\in \sigmstar\setminus \partial\sigmstar, \\
      \ \ 0, & J_*\in \partial\sigmstar.
 \end{cases}
\end{equation}

$\Vstar$ on $\sigmstar$ and $\Vprstar$ on $\sigmprstar$ define
$\Vstartil$ on $\sigmstartil$. Also $\xistar$ on $\sigmstar$ and
$\xiprstar$ on $\sigmprstar$ define $\xistartil$ on $\sigmstartil$.
Put $u=u^I(\xi)=\sum_{i\in I}d_i\u$ as before. We claim that
\[ \xistartil=u \in H_T^2(\delstartil,\Vstartil)\otimes \Q. \]
In fact $\l u^I(\xi),v_i\r =d_i$ by definition of $u^I(\xi)$ and
$\l u^I(\xi), v_{\ipr}\r=d_{\ipr}$ by \eqref{eq:dipr2}.
Similarly $\l u^I(\xi), v_{J}\r=d_{J}$ by \eqref{eq:dJ}. These
equalities imply $\xistartil=u^I(\xi)$.

We apply Theorem \ref{theo:varrigid} and get
\begin{equation}\label{eq:hatvarst0}
 \hatvarst(\delstartil,\Cstartil,\xistartil)=0.
\end{equation}
On the other hand by using \eqref{eq:BL2} and \eqref{eq:startil2}
we have
\begin{align*}\label{eq:hatvarst*}
 &\hatvarst(\delstartil,\Vstartil,\xistartil) \\
      &=b_I(\delpr,\Vpr,\xipr;\Delta,\V,\xi)+
 \sum_{J*K\in \sigmstar\setminus \sigmn}
 b_{J*K}(\delprstar,\Vprstar,\xiprstar;\delstar,\Vstar,\xistar).
\end{align*}
For $K=\emptyset$ the term
$b_{J*K}(\delprstar,\Vprstar,\xiprstar;\delstar,\Vstar,\xistar)
=b_J(\delpr,\Vpr,\xipr;\Delta,\V,\xi)=0$ by induction assumption.
In case $K\not=\emptyset$,
$b_{J*K}(\delprstar,\Vprstar,\xiprstar;\delstar,\Vstar,\xistar)(u)$
tends to $0$ when $m$ tends to $\infty$ by Lemma \ref{lemm:Cn}.
From this and \eqref{eq:hatvarst0} it follows that
$b_I(\delpr,\Vpr,\xipr;\Delta,\V,\xi)$ must be equal to zero.
Together with inductive assumption this proves that $B_n$ holds.

Thus Theorem \ref{thm:Bn} is proved.

The rest of this section is devoted to the proof of Theorem
\ref{thm:crepant1}.
We shall first show that $\delpr$ is complete and the following equality
holds for every $\Jpr\in \sigmpr$:
\begin{equation}\label{eq:deg}
 \deg(\delpr_{\Jpr})=\deg(\Delta_{\rho(\Jpr)}).
\end{equation}

We use the notation $V=L_\R$ and $V_K=(L_K)_\R$ where $(L_K)_\R$ is
as in Section 2. Similar notations are used for $K'\in \sigmpr$.
Let $v$ be a generic vector in $V/V_K$ and put
\[ S_v(K)=\{I\in \Sigma_K^{(n-k)}\mid v\in C_K(I)\} \]
where $\Sigma_K^{(n-k)}=\{I\in \sigmn\mid K\subset I\}$ and $C_K(I)$
is the image of $C(I)$ in $V/V_K$.
Recall that $\deg(\Delta_K)$ is defined to be equal to
\[ \sum_{I\in S_v(K)}w(I). \]
Completeness of $\Delta$ implies that this is independent of
generic vector $v$. We take a generic vector $v$ in $V$ and denote
its image in $V/V_K$ also by $v$. In this sense $v$ may be
considered as a vector in $V/V_{\Jpr}$ and in $V/V_{\rho(\Jpr)}$
at the same time. Suppose that $\Jpr$ lies in
$\Sigma^{\prime(l')}$ and $\rho(\Jpr)$ in $\Sigma^{(l)}$.

We shall show
\begin{equation*}\label{eq:deg2}
 \sum_{\Ipr\in S_v(\Jpr)}w(\Ipr)=
 \sum_{I\in S_v(\rho(\Jpr))}w(I).
\end{equation*}
Since the right-hand side is independent of $v$ (being equal to
$\deg(\Delta_{\rho(\Jpr)})$) this would imply that $\delpr$ is
complete and the equality \eqref{eq:deg} holds.
Since $w(\Ipr)=w(\rho(\Ipr))$ it is enough to show that
there is a bijection $S_v(\Jpr)\to S_v(\rho(\Jpr))$.

The projection $p: V/V_{\Jpr} \to V/V_{\rho(\Jpr)}$ maps every cone
$\Cpr_{\Jpr}(\Ipr)$ for $\Ipr\in \Sigma_{\Jpr}^{\prime (n-l')}$ to a cone
contained in $C_{\rho(\Jpr)}(\rho(\Ipr))$. In particular $p$ defines
a map $S_v(\Jpr)\to S_v(\rho(\Jpr))$.
Take a simplex $I\in S_v(\rho(\Jpr))$,
and put $\sigmpr(I)=\{\Ipr\in \Sigma_{\Jpr}^{\prime(n-l')}\mid
\rho(\Ipr)=I\}$.
Then the cones $\{\Cpr(\Ipr)\}$ with $\Ipr\in \sigmpr(I)$ are contained
in the image of $C(I)$ in $V/V_J$, and together with their faces, they
form a fan in $V/V_{\Jpr}$. Since $C(I)$ contains $v$ regarded as a point in
$V/V_{\rho(\Jpr)}$, there is exactly one simplex $\Ipr\in \sigmpr(I)$
such that $\Cpr(\Ipr)$ contains $v$ regarded as a point in $V/V_{\Jpr}$,
that is, there is one and only one simplex
$\Ipr\in S_v(\Jpr)$. This shows that the map
$S_v(\Jpr)\to S_v(\rho(\Jpr))$ is a bijection and proves
\eqref{eq:deg}.

Using \eqref{eq:deg} and Theorem \ref{thm:Bn} we have
\begin{align*}
 \hatvarst(\delpr,\Vpr,\xipr)&=\sum_{\Jpr\in \sigmpr}
  \deg(\delpr_{\Jpr})b_{\Jpr}(\delpr,\Vpr,\xipr)  \\
  &=\sum_{J\in \Sigma}\sum_{\rho(\Jpr)=J}
  \deg(\delpr_{\Jpr})b_{\Jpr}(\delpr,\Vpr,\xipr)  \\
  &=\sum_{J\in \Sigma}\deg(\Delta_J)b_J(\Delta,\V,\xi) \\
  &=\hatvarst(\Delta,\V,\xi).
\end{align*}
This finishes the proof of Theorem \ref{thm:crepant1}.

\begin{rema}\label{rem:crepant}
Let $\Delta$ be a fan such that every $J\in \Sigma$ is contained in some
$I\in \sigmn$. As was pointed out in Proposition \ref{prop:BL2}
the equality
\[ \hatvarvst(\Delta,\V,\xi)=\sum_{u\in L^*}t^{-\l u,v\r}
\sum_{J\in \Sigma}\deg(J)b_J(\Delta,\V,\xi)(u) \]
holds for $v\in \bigcup_{I\in \sigmn}C(I)$. If $\rho:\delpr\to \Delta$
is a map satisfying a), b), then $\delpr$ is a fan and
$\hatvarvst(\delpr,\Vpr,\xipr)$ has a
meaning for $v\in \bigcup_{I\in \sigmn}C(I)$. Theorem \ref{thm:Bn}
implies
\begin{equation*}\label{eq:vcrepant}
\hatvarvst(\delpr,\Vpr,\xipr)=\hatvarvst(\Delta,\V,\xi)
\end{equation*}
and
\begin{equation*}\label{eq:crepant}
\hatvarst(\delpr,\Vpr,\xipr)=\hatvarst(\Delta,\V,\xi),
\end{equation*}
if $\hatvarst(\delpr,\Vpr,\xipr)$ and $\hatvarst(\Delta,\V,\xi)$ are
defined as in Remark \ref{rem:BL}.
\end{rema}

\section{Invariance of orbifold elliptic class under push-forward $\rho_*$}

Let $\rho:\delpr \to \Delta$ be a map satisfying a), b).
We shall define a functorial map
\[ \rho_*:S^{-1}H_T^*(\delpr,\Vpr)\otimes\Q\to
 S^{-1}H_T^*(\Delta,\V)\otimes\Q. \]
 The following equality will hold if
every simplex $J\in \Sigma$ is contained in some $J\in \sigmn$,
cf. Theorem \ref{thm:rhoclass}.
\[ \rho_*(\hatEst(\delpr,\Vpr,\xipr))=\hatEst(\Delta,\V,\xi)
 \quad \text{for $\xipr=\rho^*(\xi)$}.
\]

In order to define $\rho_*(x)$ for
$x\in H_T^*(\Delta,\V)\otimes\Q$ it is sufficient to determine
$\rho_*(x)_I=\iota_I^*(\rho_*(x))\in S^{-1}S^*(L_{I,\V})\otimes\Q
=S^{-1}H^*(BT)\otimes\Q$ for
each $I\in \sigmn$ satisfying
\begin{equation}\label{eq:rhoI}
 \iota_{I_1\cap I_2}^{I_1}(\rho_*(x)_{I_1})=
 \iota_{I_1\cap I_2}^{I_2}(\rho_*(x)_{I_2}) \quad
 \text{for any $I_1, I_2\in \sigmn$ with $I_1\cap I_2\not=\emptyset$}
\end{equation}
in view of \eqref{eq:pistarimage}.
Note that the localized ring $S^{-1}S^*(L)\otimes\Q$ is nothing but the
algebra of rational functions on $L_\Q$.
Put $u_I=\prod_{i\in I}\u$. We then define
\begin{equation*}\label{eq:rhoI2}
 \rho_*(x)_I=|H_I|u_I\sum_{\Ipr\in \sigmprn,\rho(\Ipr)=I}
 \frac{\iota_{\Ipr}^*(x)}{|H_{\Ipr}|u_{\Ipr}}.
\end{equation*}

In order to show that the $\rho_*(x)_I$ in fact satisfy \eqref{eq:rhoI},
we introduce logarithmic forms $\OmeI$ and $\omeI$ on $V=L_\R$ for
each $I\in \sigmn$. Give an orientation to $I$ and let
$I=\{i_1,\ldots,i_n\}$ be the ordering of $I$ concordant to
the orientation. Put $w_{i_\nu}^I=\frac{du_{i_\nu}^I}{u_{i_\nu}^I}$
and
\[
 \OmeI=w_{i_1}^I\wedge \cdots \wedge w_{i_n}^I,\quad
 \omeI=\sum_{\nu=1}^n(-1)^{\nu-1}w_{i_1}^I\wedge\cdots\wedge
 \widehat{w_{i_\nu}^I}
   \wedge\cdots\wedge w_{i_n}^I
\]
where \ $\hat{}$ \ means to delete the underlying symbol.

\begin{lemm}\label{lemm:OomI}
Give the concordant orientation with $I$ to $\Ipr$ such that $\rho(\Ipr)=I$.
Then
\begin{equation}\label{eq:OmI}
 \sum_{\Ipr\in \sigmprn,\rho(\Ipr)=I}
 \iota_{\Ipr}^*(x)\OmeIpr=\rho_*(x)_I\OmeI,
\end{equation}
and
\begin{equation}\label{eq:omI}
 \sum_{\Ipr\in \sigmprn,\rho(\Ipr)=I}
 \iota_{\Ipr}^*(x)\omeIpr=\rho_*(x)_I\omeI.
\end{equation}
\end{lemm}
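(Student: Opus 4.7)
The plan is to reduce both \eqref{eq:OmI} and \eqref{eq:omI} to the single scaling relation
\[ |H_{\Ipr}|\,du_{\ipr_1}^{\Ipr}\wedge\cdots\wedge du_{\ipr_n}^{\Ipr}=|H_I|\,du_{i_1}^I\wedge\cdots\wedge du_{i_n}^I \]
on $V=L_\R$, valid for $\Ipr\in\sigmprn$ with $\rho(\Ipr)=I$. To establish it, note that both $I$ and $\Ipr$ are $n$-dimensional in $V$, so $L_I=L_{\Ipr}=L$. Writing the transition $v_{\ipr_k}=\sum_{i\in I}M_{ki}v_i$ and dualizing yields $\u=\sum_k M_{ki}u_{\ipr_k}^{\Ipr}$, whence
\[ du_{i_1}^I\wedge\cdots\wedge du_{i_n}^I=(\det M)\,du_{\ipr_1}^{\Ipr}\wedge\cdots\wedge du_{\ipr_n}^{\Ipr}. \]
The index formula $|H_{\Ipr}|=|H_I|\cdot[L_{I,\V}:L_{\Ipr,\V}]=|H_I|\cdot|\det M|$, combined with the sign $\det M>0$ forced by the concordant orientation on $\Ipr$, then gives the claimed identity. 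Equivalently, both sides coincide with the intrinsic volume form $de_1^*\wedge\cdots\wedge de_n^*$ dual to a $\Z$-basis of $L$.

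Dividing through by $u_{\Ipr}$ and $u_I$ respectively, the scaling identity becomes
\[ \OmeIpr=\frac{|H_I|\,u_I}{|H_{\Ipr}|\,u_{\Ipr}}\,\OmeI \]
as meromorphic $n$-forms on $V$. Multiplying by $\iota_{\Ipr}^*(x)$ and summing over all $\Ipr$ with $\rho(\Ipr)=I$ produces
\[ \sum_{\rho(\Ipr)=I}\iota_{\Ipr}^*(x)\,\OmeIpr=\Bigl(|H_I|\,u_I\sum_{\rho(\Ipr)=I}\frac{\iota_{\Ipr}^*(x)}{|H_{\Ipr}|\,u_{\Ipr}}\Bigr)\OmeI=\rho_*(x)_I\,\OmeI, \]
which is exactly \eqref{eq:OmI}.

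For \eqref{eq:omI} I would interpret the numerator
\[ \mu^I:=\sum_{\nu=1}^{n}(-1)^{\nu-1}u_{i_\nu}^I\,du_{i_1}^I\wedge\cdots\widehat{du_{i_\nu}^I}\cdots\wedge du_{i_n}^I \]
as the contraction $\iota_E(du_{i_1}^I\wedge\cdots\wedge du_{i_n}^I)$ of the top form with the radial Euler vector field $E$ on $V$, which satisfies $\iota_E\,du=u$ for every linear function $u$ and is manifestly basis-independent. The scaling identity above immediately gives $\mu^{\Ipr}=(|H_I|/|H_{\Ipr}|)\,\mu^I$, and dividing by $u_{\Ipr}$ produces $\omeIpr=(|H_I|u_I)/(|H_{\Ipr}|u_{\Ipr})\,\omeI$; then \eqref{eq:omI} follows verbatim by the same summation argument. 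The only delicate step in the entire proof is the verification $\det M>0$, which is immediate from the fact that $\Cpr(\Ipr)\subset C(I)$ with both being top-dimensional cones in $V$ and both oriented concordantly with $V$.
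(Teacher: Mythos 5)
Your proof is correct and follows essentially the same route as the paper: both reduce everything to the fact that $|H_I|\,du_{i_1}^I\wedge\cdots\wedge du_{i_n}^I$ is independent of $I$ (the paper phrases this by introducing a reference form $\Theta$ from a fixed $\Z$-basis of $L$, you phrase it via the transition matrix $M$ and then observe the same basis-independence), after which both \eqref{eq:OmI} and \eqref{eq:omI} drop out of the defining formula for $\rho_*(x)_I$; your identification of the numerator of $\omeI$ with $\iota_E$ of the top form is a tidy way to repackage the paper's auxiliary form $\theta$. One small inaccuracy: the chain $|H_{\Ipr}|=|H_I|\cdot[L_{I,\V}:L_{\Ipr,\V}]$ presumes $L_{\Ipr,\Vpr}\subset L_{I,\V}$, which need not hold (the $v_{\ipr}$ lie in $L$ but not necessarily in $L_{I,\V}$); the correct justification for $|H_{\Ipr}|=|\det M|\,|H_I|$ is exactly the common-$\Z$-basis comparison you invoke in your parenthetical remark, so the conclusion stands even though the intermediate ``index'' is ill-defined.
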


\begin{proof}
For simplicity we put $I=\{1,2,\ldots,n\}$ with
orientation determined by this ordering. The ordering
$(v_1,v_2,\ldots ,v_n)$ gives an orientation to the vector space $V=L_\R$.
Take an ordered integral basis of the lattice $L$ which is concordant
with the orientation of $V=L_\R$ and let $u_1,\ldots,u_n$ be the
corresponding coordinates. Put $\Theta=du_1\wedge \cdots \wedge du_n$. Then
\[ du_1^I\wedge \cdots \wedge du_n^I =
 \frac{1}{|H_I|}du_1\wedge \cdots \wedge du_n=\frac{1}{|H_I|}\Theta. \]
Hence
\[ \OmeI=\frac{\Theta}{|H_I|u_I}. \]
Similarly
\[ \OmeIpr=\frac{\Theta}{|H_{\Ipr}|u_{\Ipr}}. \]
Then
\[
 \sum_{\rho(\Ipr)=I}\iota_{\Ipr}^*(x)\OmeIpr
 =\sum_{\rho(\Ipr)=I}\frac{\iota_{\Ipr}^*(x)}{|H_{\Ipr}|u_{\Ipr}}\Theta
 = \rho_*(x)_I\frac{\Theta}{|H_I|u_I}=\rho_*(x)_I\OmeI.
\]

Put $\theta=\sum_{i=1}^n(-1)^{i-1}u_idu_1\wedge\cdots\wedge
 \widehat{du_i}\wedge\cdots\wedge du_n$. Then we also have
\[ \omeI=\frac{\theta}{|H_I|u_I}. \]
From this \eqref{eq:omI} follows
in an entirely similar way.
\end{proof}

Fix $I\in \sigmn$ and $i\in I$ and put
$J=I\setminus \{i\} \in \Sigma^{(n-1)}$.
Let $\Sigma(J)$ be the simplicial set consisiting of all faces of $J$ and
let $\sigmpr(J)=\{\Jpr\in \sigmpr\mid \rho(\Jpr)\subset J\}$.
Fans $\Delta(J)$ and $\delpr(J)$ are induced from $\Delta$ and
$\delpr$ by restriction on $\Sigma(J)$ and $\sigmpr(J)$ respectively.
Put also
\[ \V_J=\{v_j\mid j\in \Sigma(J)^{(1)}\}\ \ \text{and}\ \
  \Vpr_J=\{v_{\jpr}\mid \jpr\in \sigmpr(J)^{(1)}\}. \]
Then $\rho$ induces $\rho|J:(\sigmpr(J),\Vpr_J)\to
 (\Sigma(J),\V_J)$ satisfying a),b).

\begin{lemm}\label{lemm:rhoJ}
\[ ((\rho|J)_*(x|J))_J=(\rho_*(x)_I)|J. \]
In other words
\begin{equation}\label{eq:rhoJ}
\sum_{\Jpr\in\sigmpr(J)^{(n-1)}}
 \iota_{\Jpr}^*(x|J)\Omega^{\Jpr}=(\rho_*(x)_I)|J\cdot\Omega^J.
\end{equation}
as logarithmic forms in the hyperplane $(L_J)_\R$ containing $C(J)$.
Here $x|J$ stands for the image of $x$ by the map
$H_T^*(\delpr,\Vpr)\to H_T^*(\delpr(J),\Vpr_J)$ sending
$x_{\jpr}$ to $x_{\jpr}$ for $\jpr\in \sigmpr(J)^{(1)}$ and to $0$ for
$\ipr\not\in \sigmpr(J)^{(1)}$. Also $u|J=\iota_J^{I*}(u)$ for
$u\in S^{-1}S^*(L_{I,\V})$.
\end{lemm}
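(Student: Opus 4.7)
The plan is to prove the algebraic identity $((\rho|J)_*(x|J))_J=(\rho_*(x)_I)|J$ directly from the definition of $\rho_*(x)_I$; the form identity \eqref{eq:rhoJ} then follows by multiplying both sides by $\Omega^J$ and invoking Lemma \ref{lemm:OomI} applied to the induced map $\rho|J:\delpr(J)\to\Delta(J)$, which also satisfies a) and b). Concretely, I will compute $\iota_J^{I*}$ of
\[\rho_*(x)_I=|H_I|\,u_I\sum_{\Ipr\in\sigmprn,\,\rho(\Ipr)=I}\frac{\iota_{\Ipr}^*(x)}{|H_{\Ipr}|\,u_{\Ipr}}\]
by restricting each rational summand to the hyperplane $\{u_i^I=0\}$ in $L_\Q^*$ and relabeling $u_{i'}^I\mapsto u_{i'}^J$ for $i'\in J$.

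The summands split into two classes: (A) those $\Ipr$ admitting a codimension-one face $\Jpr=\Ipr\setminus\{\ipr\}$ with $\rho(\Jpr)=J$, and (B) the rest. For class (B) none of the factors $u_{\jpr}^{\Ipr}$ occurring in $u_{\Ipr}$ is proportional to $u_i^I$ (the hyperplane $\{u_{\jpr}^{\Ipr}=0\}$ is spanned by the remaining $v_{\ell'}$, and this equals $(L_J)_\R$ only in class (A)), so the corresponding summand is regular along $u_i^I=0$ while the global factor $u_I$ vanishes there, giving a zero contribution. For class (A), the condition $\rho(\Ipr)=I$ forces $v_{\ipr}=a_{\ipr i}v_i+\sum_{j\in J}a_{\ipr j}v_j$ with $a_{\ipr i}>0$, and the resulting identity $u_{\ipr}^{\Ipr}=u_i^I/a_{\ipr i}$ of linear forms on $L_\Q^*$ cancels the order-one pole of $1/u_{\ipr}^{\Ipr}$ against the vanishing factor $u_i^I$ in $u_I$. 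Condition b) applied to the simplicial subdivision of $C(I)$ by $\{\Cpr(\Ipr):\rho(\Ipr)=I\}$ further shows that each $\Jpr$ with $\rho(\Jpr)=J$ is a codimension-one face of exactly one such $\Ipr$. Combining these observations with the equalities $\iota_{\Ipr}^*(x)|_{u_i^I=0}=\iota_{\Jpr}^*(x|J)$ and $\prod_{\jpr\in\Jpr}u_{\jpr}^{\Ipr}\mapsto u_{\Jpr}$ (both products of dual-basis elements on $L_{\Jpr,\Vpr}$), I obtain
\[\iota_J^{I*}(\rho_*(x)_I)=u_J\sum_{\Jpr,\,\rho(\Jpr)=J}\frac{|H_I|\,a_{\ipr i}}{|H_{\Ipr}|}\cdot\frac{\iota_{\Jpr}^*(x|J)}{u_{\Jpr}}.\]

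The main step, which I expect to carry the essential content of the lemma, is the combinatorial identity $|H_I|\,a_{\ipr i}/|H_{\Ipr}|=|H_J|/|H_{\Jpr}|$; this reconciles the displayed expression with the defining formula $((\rho|J)_*(x|J))_J=|H_J|\,u_J\sum_{\Jpr,\,\rho(\Jpr)=J}\iota_{\Jpr}^*(x|J)/(|H_{\Jpr}|\,u_{\Jpr})$. To establish it I will expand $v_{\ell'}=\sum_{i'\in I}a_{\ell' i'}v_{i'}$ for all $\ell'\in\Ipr$, obtaining a change-of-basis matrix $A=(a_{\ell' i'})$ whose absolute determinant equals $[L_{I,\V}:L_{\Ipr,\Vpr}]=|H_{\Ipr}|/|H_I|$. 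Because $\rho(\jpr)\subset J$ forces $a_{\jpr i}=0$ for every $\jpr\in\Jpr$, the $i$-th column of $A$ has its unique nonzero entry $a_{\ipr i}$ in the $\ipr$-th row; Laplace expansion along that column yields $|\det A|=a_{\ipr i}\,|\det A'|$, where $A'$ is the analogous matrix for $\Jpr$ relative to $J$ and satisfies $|\det A'|=|H_{\Jpr}|/|H_J|$. Rearranging gives the required identity, completing the proof of part (a) and hence of \eqref{eq:rhoJ}.
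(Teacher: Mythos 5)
Your proof is correct, but it takes a genuinely different route from the paper.

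The paper proves the lemma by restricting the logarithmic-form identity of Lemma \ref{lemm:OomI} (the $\omega^I$ version) to the hyperplane $V_0=(L_J)_\R$, using Sublemma \ref{sub:omegavanish} to show that every term $\iota_{\Ipr}^*(x)\omega_{\ipr}^{\Ipr}$ whose associated facet hyperplane $(L_{\Ipr\setminus\{\ipr\}})_\R$ differs from $V_0$ restricts to zero, and then matching the surviving terms with $\Omega^J$, $\Omega^{\Jpr}$ via $\omega_n^I|V_0=(-1)^{n-1}\Omega^J$. The lattice-index bookkeeping is entirely absorbed into the normalization $\Omega^I=\Theta/(|H_I|u_I)$, so it never appears explicitly. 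You instead work directly with the defining rational function $\rho_*(x)_I=|H_I|u_I\sum\iota_{\Ipr}^*(x)/(|H_{\Ipr}|u_{\Ipr})$ and restrict it to $\{u_i^I=0\}$: the class-(B) terms die because the numerator $u_I$ vanishes identically while the denominator $u_{\Ipr}$ does not, while the class-(A) terms undergo the pole–zero cancellation $u_{\ipr}^{\Ipr}=u_i^I/a_{\ipr i}$. The price you pay for bypassing the differential forms is that the index identity $|H_I|\,a_{\ipr i}/|H_{\Ipr}|=|H_J|/|H_{\Jpr}|$ becomes something you must prove explicitly; your Laplace expansion of the change-of-basis matrix $A$ along its $i$-th column (using $a_{\jpr i}=0$ for all $\jpr\in\Jpr$, which follows from $\Cpr(\Jpr)\subset(L_J)_\R$) does exactly what the form normalization did for free. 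Both arguments also rely on the same two combinatorial facts — that a facet $\Jpr$ of the subdivision lying on $C(J)$ belongs to exactly one $\Ipr$ with $\rho(\Ipr)=I$, and that no $\Ipr$ has two such facets — though you state them more explicitly. Your argument is more elementary (no differential forms) at the cost of being more computation-heavy; the paper's is slicker once one has set up the form machinery, which it reuses throughout Section 8.
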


\begin{proof}
In general let $V_1,\ldots,V_n$ be the hyperplanes spanned by
$(n-1)$-dimensional
faces of a strongly convex $n$-dimensional simplicial cone $C$ in
an $n$-dimensional vector space $V$. Let $w_i=\frac{du_i}{u_i}$ be the
logarithmic $1$-form corresponding to $V_i$. Here $u_i$ is a linear form
vanishing on $V_i$. Note that $w_i$ depends only on $V_i$ but not
on particular $u_i$. Put
\[ \omega_i=(-1)^{i-1}w_1\wedge\cdots\wedge
 \widehat{w_i}\wedge\cdots\wedge w_n \quad \text{and} \quad
 \omega=\sum_{i=1}^n\omega_i. \]

\begin{sub}\label{sub:omegavanish}
Let $V_0$ be a hyperplane defined by $\sum_ia_iu_i=0$. If $V_0$
is different from $V_i,\ 1\leq i\leq n$, then $\omega|V_0=0$.
\end{sub}
Since none of the $V_i$ coincides with $V_0$ there are at least two
non-zero $a_i$. We may suppose that $a_n\not=0$. Then
$u_n=-\frac{\sum_{i=1}^{n-1}a_iu_i}{a_n}$ on $V_0$ and
\begin{align*}
 \omega_i |V_0&=(-1)^{i-1}w_1\wedge\cdots\wedge
 \widehat{w_i}\wedge\cdots\wedge w_{n-1}\wedge
 \frac{-d(\sum_{i=1}^{n-1}a_iu_i)}{a_nu_n} \\
 &=(-1)^{n+1}\frac{a_idu_1\wedge\cdots\wedge du_{n-1}}
 {a_n\prod_{j\not=i}u_j}.
\end{align*}
for $i\not=n$. The last equality automatically holds for $i=n$.
Therefore
\begin{align*} \omega|V_0&=\sum_{i=1}^n\omega_i|V_0 \\
  &=(-1)^{n+1}\frac{1}{a_n}\sum_{i=1}^n\frac{a_i}{\prod_{j\not=i}u_j}
  du_1\wedge\cdots\wedge du_{n-1} \\
  &=(-1)^{n+1}\frac{1}{a_n\prod_{j=1}^nu_j}(\sum_{i=1}^na_iu_i)
  du_1\wedge\cdots\wedge du_{n-1}=0
\end{align*}
Thus Sublemma \ref{sub:omegavanish} holds.

We continue with the proof of Lemma \ref{lemm:rhoJ}. We may suppose
without loss of generality that $I=\{1,\ldots,n\}$ and $i=n$ and
$J=I\setminus \{n\}$. Put $V_0=(L_J)_\R$ and
$\omega_i^I=(-1)^{i-1}w_1^I \wedge\cdots\wedge
\widehat{w_i^I} \wedge\cdots\wedge w_n^I$.
We also use similar notations for $\Ipr$.

Note that
\begin{equation}\label{eq:rhoomega}
 \rho_*(x)_I\omega_n^I|V_0=(-1)^{n-1}(\rho_*(x)_I)|J\cdot\Omega^J,
\end{equation}
since $\omega_n^I|V_0=(-1)^{n-1}\Omega^J$.
We next consider the contribution from the left hand side
of \eqref{eq:omI} to the term
$\rho_*(x)_I\omega_n^I$ in $\rho_*(x)_I\omega^I=\sum_i\rho_*(x)_I\omega_i^I$.
Sublemma \ref{sub:omegavanish} implies that it suffices to consider
only such $\Ipr$ that $\rho(\Ipr)=I$ and that some facet cone $\Cpr(\Jpr)$
of $\Cpr(\Ipr)$ is contained in $C(J)\subset V_0$. The set of
such $\Ipr$ will be
denoted by $\sigmpr(I,J)^{(n)}$. Let $\Jpr=\Ipr\setminus \{n'(\Ipr)\}$.
In this case $\omega_{\ipr}^{\Ipr}$ for $\ipr\not=n'(\Ipr)$ containes
$w_n^I=\frac{du_n^I}{u_n^I}$
when expressed in terms of $\omega_i^I$. Hence the contribution to
$\rho_*(x)_I\omega_n^I$ comes only from
$\iota_{\Ipr}^*(x)\omega_{n'(\Ipr)}^{\Ipr}$.
It follows that
\[ \rho_*(x)_I\omega_n^I|V_0
 =\sum_{\Ipr\in \sigmpr(I,J)^{(n)}}
 \iota_{\Ipr}^*(x)\omega_{n'(\Ipr)}^{\Ipr}|V_0
 =\sum_{\Jpr\in\sigmpr(J)^{(n-1)}}
 \iota_{\Jpr}^*(x|J)\cdot\omega_{n'(\Ipr)}^{\Ipr}|V_0. \]
Combining this with \eqref{eq:rhoomega} and noting that
$\omega_{n'(\Ipr)}^{\Ipr}|V_0=(-1)^{n-1}\Omega^{\Jpr}$
we obtain \eqref{eq:rhoJ}. This finishes the proof of Lemma \ref{lemm:rhoJ}.
\end{proof}

Lemma \ref{lemm:rhoJ} shows that that $(\rho_*(x)_I)|J$ depends only on
$J$. \eqref{eq:rhoI} follows from this when $I_1\cap I_2$ has codimension $1$.
The general case is proved by induction on codimensions using Lemma \ref{lemm:rhoJ}. Thus
$\rho_*(x)$ is well-defined. Once this is established one can rewrite
Lemma \ref{lemm:OomI} in the following
\begin{prop}\label{prop:OmI}
\begin{equation*}\label{eq*:OmI}
 \sum_{\Ipr\in \sigmprn,\rho(\Ipr)=I}
 \iota_{\Ipr}^*(x)\OmeIpr=\iota_I^*(\rho_*(x))\OmeI,
\end{equation*}
and
\begin{equation*}\label{eq*:omI}
 \sum_{\Ipr\in \sigmprn,\rho(\Ipr)=I}
 \iota_{\Ipr}^*(x)\omeIpr=\iota_I^*(\rho_*(x))\omeI.
\end{equation*}
\end{prop}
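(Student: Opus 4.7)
The plan is to realize Proposition \ref{prop:OmI} as a reformulation of Lemma \ref{lemm:OomI} obtained by replacing the locally-defined expressions $\rho_*(x)_I$ with the restrictions $\iota_I^*(\rho_*(x))$ of a single globally-defined class $\rho_*(x)\in S^{-1}H_T^*(\Delta,\V)\otimes\Q$. Thus the whole content of the proposition is the assertion that the collection $\{\rho_*(x)_I\}_{I\in\sigmn}$ patches together to a genuine element of the localized Stanley--Reisner ring; once that is done, $\iota_I^*(\rho_*(x))=\rho_*(x)_I$ by definition, and substitution into Lemma \ref{lemm:OomI} yields both desired identities.

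By the characterization \eqref{eq:pistarimage} of the image of $\bigoplus_{I\in\sigmn}\iota_I^*$ (extended to the localization), patchability amounts to verifying the compatibility \eqref{eq:rhoI}: for all $I_1,I_2\in\sigmn$ with $K=I_1\cap I_2\neq\emptyset$,
\[ \iota_K^{I_1*}(\rho_*(x)_{I_1})=\iota_K^{I_2*}(\rho_*(x)_{I_2}). \]
For the codimension-one case $|K|=n-1$, Lemma \ref{lemm:rhoJ} directly identifies both sides with the intrinsic quantity $((\rho|K)_*(x|K))_K$. For general codimension I would argue by induction. Given $K\subset I\in\sigmn$ with $|K|=n-k$, pick any chain $K\subset J_1\subset\cdots\subset J_{k-1}\subset I$ of simplices with $|J_\ell|=|K|+\ell$. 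Iteratively applying Lemma \ref{lemm:rhoJ} (first to $I$ and $J_{k-1}$, then to the pair $(J_{k-1},J_{k-2})$ inside the sub-fan $\Delta(J_{k-1})$, and so on) yields
\[ (\rho_*(x)_I)|K=((\rho|K)_*(x|K))_K, \]
which depends only on $K$, not on the choice of $I\supset K$. Applying this to both $I_1$ and $I_2$ gives the required equality.

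The only delicate point is making the iterated restriction of Lemma \ref{lemm:rhoJ} rigorous: at the $\ell$th step one is working inside the sub-fan $\Delta(J_\ell)$, which is automatically a fan with $J_\ell$ as its unique top simplex, so Lemma \ref{lemm:rhoJ} (with $I$ and $J$ replaced by $J_\ell$ and $J_{\ell-1}$) applies verbatim; no further hypothesis on $\Delta$ or $\delpr$ is required. After well-definedness is secured, the rest of the argument is pure substitution: the two identities
\[ \sum_{\rho(\Ipr)=I}\iota_{\Ipr}^*(x)\OmeIpr=\rho_*(x)_I\OmeI,\qquad \sum_{\rho(\Ipr)=I}\iota_{\Ipr}^*(x)\omeIpr=\rho_*(x)_I\omeI \]
of Lemma \ref{lemm:OomI}, combined with $\rho_*(x)_I=\iota_I^*(\rho_*(x))$, are exactly the two assertions of Proposition \ref{prop:OmI}. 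The main obstacle is thus concentrated in the inductive compatibility step rather than in any new computation.
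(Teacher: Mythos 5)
Your proposal matches the paper's own argument: the proposition is just Lemma \ref{lemm:OomI} rewritten once the collection $\{\rho_*(x)_I\}$ is shown to satisfy the compatibility condition \eqref{eq:rhoI}, with Lemma \ref{lemm:rhoJ} supplying the codimension-one case and an induction on codimension (iterating that lemma inside the sub-fans $\Delta(J_\ell)$) handling the general case. The paper states the inductive step more tersely than you do, but the route is identical and your elaboration is sound.
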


\begin{prop}
$\rho_*$ maps $H_T^*(\delpr,\Vpr)\otimes\Q$ into
$H_T^*(\Delta,\V)\otimes\Q$.
\end{prop}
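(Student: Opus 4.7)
The plan is to show that for every $x\in H_T^*(\delpr,\Vpr)\otimes\Q$, each component $\rho_*(x)_I=\iota_I^*(\rho_*(x))$ of the already well-defined tuple lies in $S^*(L_{I,\V})\otimes\Q$ and not merely in its localization. The compatibility \eqref{eq:rhoI} needed to place $\rho_*(x)$ in $\prod_I S^{-1}S^*(L_{I,\V})\otimes\Q$ has already been established in the discussion preceding Proposition \ref{prop:OmI} by iterating Lemma \ref{lemm:rhoJ} along chains of codimension-one faces, so only the polynomiality of each $\rho_*(x)_I$ as a rational function in the variables $u_i^I$, $i\in I$, remains to be shown.

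I would argue this by induction on $n=\rank L$. For the base case $n=1$, $I$ is a singleton $\{i\}$ and every $\Ipr$ with $\rho(\Ipr)=I$ is a singleton $\{\ipr\}$ with $v_{\ipr}=a_{\ipr i}v_i$; hence $u_{\ipr}^{\Ipr}$ is a positive rational multiple of $u_i^I$, the factor $u_I=u_i^I$ in the numerator cancels the denominator, and the sum defining $\rho_*(x)_I$ collapses to a polynomial. For the inductive step, Lemma \ref{lemm:rhoJ} combined with the inductive hypothesis applied to the rank $n-1$ situation $\rho|J:(\sigmpr(J),\Vpr_J)\to(\Sigma(J),\V_J)$ shows that, for each $i\in I$ with $J=I\setminus\{i\}$, the restriction of $\rho_*(x)_I$ to the coordinate hyperplane $u_i^I=0$ is well-defined and coincides with the polynomial $((\rho|J)_*(x|J))_J$.

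The main obstacle is then to exclude poles along the \emph{non-coordinate} hyperplanes $u_{\ipr}^{\Ipr}=0$ produced by those $\Ipr$ (with $\rho(\Ipr)=I$) whose edge vectors $v_{\ipr}$ do not lie on any ray $C(i)$ of $I$. To handle this I plan to invoke Proposition \ref{prop:OmI}, which gives the identity
\[
\iota_I^*(\rho_*(x))\,\Omega^I=\sum_{\Ipr:\rho(\Ipr)=I}\iota_{\Ipr}^*(x)\,\Omega^{\Ipr}
\]
of top logarithmic forms on $L_\R$. Every interior wall of the subdivision $\{\Cpr(\Ipr):\rho(\Ipr)=I\}$ of $C(I)$ is shared by exactly two adjacent $\Ipr_1,\Ipr_2$; writing $\{\ipr_j\}=\Ipr_j\setminus(\Ipr_1\cap\Ipr_2)$, the edge functionals $u_{\ipr_1}^{\Ipr_1}$ and $u_{\ipr_2}^{\Ipr_2}$ cut out the same hyperplane but, as linear forms on $L_\R$, differ by a negative scalar because $v_{\ipr_1}$ and $v_{\ipr_2}$ lie on opposite sides of the wall. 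A residue calculation along the wall then shows the contributions of $\Ipr_1$ and $\Ipr_2$ cancel, so the right-hand side has logarithmic poles only along the coordinate hyperplanes $u_i^I=0$. Combined with the regularity established in the previous step, this forces $\rho_*(x)_I$ to have no divisorial poles, hence to be polynomial in the $u_i^I$, completing the proof.
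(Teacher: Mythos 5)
Your argument follows the same overall strategy as the paper — translate polynomiality of $\rho_*(x)_I$ into a statement about the pole loci of the form $\Omega'=\sum_{\rho(\Ipr)=I}\iota_{\Ipr}^*(x)\Omega^{\Ipr}$ via Proposition \ref{prop:OmI}, and kill the poles along interior walls by a residue-cancellation between the two adjacent cones $\Ipr_1,\Ipr_2$. That cancellation is indeed the heart of the matter, and you have located it correctly.

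Two points where your route diverges from the paper and where there is a genuine gap, however. First, the induction on $n$ for the coordinate hyperplanes is circular as you have stated it. Lemma \ref{lemm:rhoJ} expresses the restriction $(\rho_*(x)_I)|J=\iota_J^{I*}(\rho_*(x)_I)$, but this is an operation on the localized ring $S^{-1}S^*(L_{I,\V})$: if $\rho_*(x)_I$ had a pole along $V_J$, its denominator would vanish identically after applying $\iota_J^{I*}$ and the restriction would simply be undefined. So invoking Lemma \ref{lemm:rhoJ} plus the inductive hypothesis tells you what the restriction equals \emph{granted that it is defined}, but does not by itself rule out a pole along $V_J$ — which is precisely what needs to be shown. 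The paper sidesteps this entirely and without induction: each $\Omega^{\Ipr}$ has at most a simple pole along $V_J$ (at most one factor $u_{\ipr}^{\Ipr}$ of $u_{\Ipr}$ can vanish there, since $\Cpr(\Ipr)$ is a strongly convex simplicial cone), so $\Omega'$ has at most a simple pole along $V_J$; since $\Omega^I$ has exactly a simple pole there, the quotient $\rho_*(x)_I=\Omega'/\Omega^I$ is regular. This is both shorter and avoids the circularity. If you want to keep your formulation, you should insert this pole-order bound explicitly in place of the appeal to Lemma \ref{lemm:rhoJ}.

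Second, a smaller imprecision: you attribute the cancellation along an interior wall to the fact that $u_{\ipr_1}^{\Ipr_1}$ and $u_{\ipr_2}^{\Ipr_2}$ differ by a \emph{negative} scalar. That sign is irrelevant for the logarithmic one-forms, since $d(cu)/(cu)=du/u$ for any nonzero $c$; as the paper notes, $w_{\ipr_1}^{\Ipr_1}=w_{\ipr_2}^{\Ipr_2}$. The actual source of the sign flip is that $\Ipr_1$ and $\Ipr_2$, oriented concordantly with $I$, induce \emph{opposite orientations} on the shared facet $\Jpr$; it is this that makes the two residue forms on the wall cancel. The paper's proof makes this orientation argument explicit, and your write-up should do the same rather than lean on the sign of the defining linear form.
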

\begin{proof} Take $x\in H_T^*(\delpr,\Vpr)\otimes\Q$. It is enough
to show that $\iota_I^*(\rho_*(x))$ belongs to
$S^*(L)\otimes\R$ for any $I\in \sigmn$. Note that
$\OmeI$ has simple poles only along $V_J=(L_J)_\R$ for
$J\in \Sigma^{(n-1)}$ with $J\subset I$. In view of Proposition
\ref{prop:OmI} it suffices to show that
$\Omega'=\sum_{\Ipr\in \sigmprn,\rho(\Ipr)=I} \iota_{\Ipr}^*(x)\OmeIpr$
has at most simple poles along the same loci $\{V_J\}$ as $\OmeI$.

Poles of $\Omega'$ appear along hyperplanes $(L_{\Jpr})_\R$ where $\Jpr$
runs over $(\sigmpr)^{(n-1)}$ with $\rho(\Jpr)\subset I$.
If $\Jpr$ is such that $\rho(\Jpr)=I$, then
there are exactly two $\Ipr_1,\Ipr_2\in \sigmprn$ that have $\Jpr$ as common
face. If $\Jpr=\{\jpr_1,\ldots,\jpr_{n-1}\}$, $\Ipr_1=\Jpr\cup \{\ipr_1\}$
and $\Ipr_2=\Jpr\cup \{\ipr_2\}$, then $\omega_{\ipr_1}=
\frac{du_0}{u_0}=\omega_{\ipr_2}$, where $u_0$ is a linear form vanishing on
$(L_{\Jpr})_\R$. Since $\Ipr_1$ and $\Ipr_2$ induce opposite orientations
on $\Jpr$,
%font decreased to prevent overfull (Taras)
{\small
\[
\iota_{\Ipr_1}^*(x)(\Omega)^{\Ipr_1}+
 \iota_{\Ipr_2}^*(x)(\Omega)^{\Ipr_2}=
 \pm \frac{du_0}{u_0}\wedge\left(\iota_{\Ipr_1}^*(x)w_{\jpr_1}^{\Ipr_1}
 \wedge\cdots\wedge w_{\jpr_{n-1}}^{\Ipr_1}
 -\iota_{\Ipr_2}^*(x)w_{\jpr_1}^{\Ipr_2}\wedge\cdots\wedge
 w_{\jpr_{n-1}}^{\Ipr_2}\right). \]}
The restriction of the form $\omega'$ in the parenthesis to the
hyperplane $(L_{\Jpr})_\R$ vanishes. Hence
$\iota_{\Ipr_1}^*(x)(\Omega)^{\Ipr_1}+
 \iota_{\Ipr_2}^*(x)(\Omega)^{\Ipr_2}=\pm\frac{du_0}{u_0}\wedge\omega'$
has no pole along $(L_{\Jpr})_\R$. This implies that $\Omega'$ has no pole
along $(L_{\Jpr})_\R$ for $\Jpr$ such that $\rho(\Jpr)=I$.

For $\Jpr\in (\sigmpr)^{(n-1)}$
such that $\rho(\Jpr)$ is a facet $J$ of $I$,
$\Omega'$ has at most simple pole along $(L_{\Jpr})_\R=(L_J)_\R=V_J$.
We have proved that $\Omega'$ has at most simple poles along the same
loci as $\OmeI$.
Hence $\rho_*(x)_I$ has no pole and in fact it is a polynomial.
\end{proof}

Functorial properties of $\rho_*$ are expressed in the following
\begin{prop}\label{prop:rhostar}
$\rho_*:H_T^*(\delpr,\Vpr)\otimes \Q\to
 H_T^*(\Delta,\V)\otimes\Q$ satisfies the following properties:
\begin{enumerate}
\item
\begin{align*}
(\rho_2\circ\rho_1)_*&=(\rho_2)_*\circ(\rho_1)_*\ \text{for
$\rho_1:(\Delta'',V'')\to (\delpr,\Vpr)$ and
$\rho_2:(\delpr,\Vpr)\to (\Delta,\V)$}, \\
 (id)_*&=id \ \text{for the identity map $id:(\Delta,\V)\to(\Delta,\V)$}.
\end{align*}
\item
 $\rho_*(1)=1$ for $1\in H_T^0(\Delta)$.
\item
 $\rho_*(x\rho^*(y))=\rho_*(x)y$ for $y\in
 H_T^*(\Delta,\V)\otimes\Q$.
\item
$\rho_*$ is an $H^*(BT)\otimes\Q$-module map.
\item
Assume that $\rho$ satisfies the condition c) in addition to a) and b).
Then $\rho_*$ is compatible with the push-forward $\pi_*$ to a point,
i.e., the following diagram commutes:
\[ \begin{CD}
 H_T^*(\delpr,\Vpr)\otimes \Q @>{\rho_*}>> H_T^*(\Delta,\V)\otimes\Q \\
 @V{\pi_*}VV @VV{\pi_*}V \\
 S^{-1}H^*(BT)\otimes\Q @= S^{-1}H^*(BT)\otimes\Q.
 \end{CD} \]
\end{enumerate}
\end{prop}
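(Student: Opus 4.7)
\textbf{Proof plan for Proposition \ref{prop:rhostar}.} The approach is to unwind everything from the defining formula
\[ \iota_I^*(\rho_*(x))\,=\,|H_I|\,u_I\!\!\sum_{\Ipr\in\sigmprn,\,\rho(\Ipr)=I}\!\!\frac{\iota_{\Ipr}^*(x)}{|H_{\Ipr}|\,u_{\Ipr}},\qquad I\in\sigmn, \]
attacking the algebraic statements (i), (iii), (iv), (v) by substitution and saving the normalisation (ii) for last, since it is the only assertion that requires extra geometric input.

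For (i), given $\rho_1:(\delprpr,\Vprpr)\to(\delpr,\Vpr)$ and $\rho_2:(\delpr,\Vpr)\to(\Delta,\V)$, I substitute the expression
$\iota_{\Ipr}^*((\rho_1)_*(x))=|H_{\Ipr}|u_{\Ipr}\sum_{\rho_1(\Iprpr)=\Ipr}\iota_{\Iprpr}^*(x)/(|H_{\Iprpr}|u_{\Iprpr})$
into the formula for $\iota_I^*((\rho_2)_*(\rho_1)_*(x))$; the factors $|H_{\Ipr}|u_{\Ipr}$ cancel and the double sum collapses to the single sum defining $((\rho_2\circ\rho_1)_*(x))_I$. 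The identity $(\mathrm{id})_*=\mathrm{id}$ is then immediate from the defining formula. For (iii), relation \eqref{eq:iotarho} gives $\iota_{\Ipr}^*(\rho^*(y))=\iota_{\rho(\Ipr)}^*(y)=\iota_I^*(y)$ whenever $\rho(\Ipr)=I$, so the factor $\iota_I^*(y)$ pulls out of the sum and
\[ \iota_I^*(\rho_*(x\,\rho^*(y)))\,=\,\iota_I^*(y)\cdot\iota_I^*(\rho_*(x))\,=\,\iota_I^*(\rho_*(x)\,y). \]
Statement (iv) is the specialisation of (iii) to $y\in H^*(BT)\otimes\Q$, together with Lemma \ref{lemm:rhostar} which says $\rho^*$ restricts to the identity on $L_\Q^*$. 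For (v), plugging the formula for $\iota_I^*(\rho_*(x))$ into the definition of $\pi_*$ on $(\Delta,\V)$ lets the prefactor $|H_I|u_I$ cancel the denominator, yielding
\[ \pi_*(\rho_*(x))\,=\,\sum_{I\in\sigmn}\sum_{\rho(\Ipr)=I}\frac{w(I)\,\iota_{\Ipr}^*(x)}{|H_{\Ipr}|u_{\Ipr}}; \]
condition c) gives $w(\Ipr)=w(\rho(\Ipr))=w(I)$, and reindexing as a single sum over $\Ipr\in\sigmprn$ recovers $\pi_*(x)$ on $(\delpr,\Vpr)$.

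The remaining statement (ii) is where the real work lies. By Proposition \ref{prop:OmI} applied to $x=1$, the identity $\rho_*(1)=1$ is equivalent to the identity of meromorphic top forms
\[ \sum_{\Ipr\in\sigmprn,\,\rho(\Ipr)=I}\Omega^{\Ipr}\,=\,\Omega^I \]
on $L_\R$ for each $I\in\sigmn$. I would prove this by induction on $n=\dim L$; the base $n=1$ is the elementary computation $|H_{\Ipr}|u_{\Ipr}=|H_I|u_I$ already used in the $n=1$ case of Theorem \ref{thm:Bn}. For the inductive step, set $\Omega:=\sum_{\rho(\Ipr)=I}\Omega^{\Ipr}-\Omega^I$ and study its polar locus. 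The pairing-of-adjacent-maximal-cones argument from the proof of the preceding proposition (that $\rho_*$ lands in $H_T^*(\Delta,\V)\otimes\Q$), applied with $x=1$, shows that $\sum\Omega^{\Ipr}$ has no pole along any interior hyperplane $(L_{\Jpr})_\R$ with $\rho(\Jpr)=I$, so the only possible poles of $\Omega$ lie along the facet hyperplanes $V_J=(L_J)_\R$ for $J\subset I$ a facet. Along each such $V_J$ the residue of $\Omega^I$ is (up to sign) $\Omega^J$, while the residue of $\sum\Omega^{\Ipr}$ equals $\sum_{\rho(\Jpr)=J}\Omega^{\Jpr}$ for the induced morphism $\rho|J$ of Lemma \ref{lemm:rhoJ}, and this equals $\Omega^J$ by the inductive hypothesis in dimension $n-1$. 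Hence $\Omega$ is holomorphic on all of $L_\R$. Since each $\Omega^{\Ipr}$ and $\Omega^I$ is invariant under the scaling $(u_1,\dots,u_n)\mapsto(\lambda u_1,\dots,\lambda u_n)$, so is $\Omega$; but any holomorphic top form $p(u)\,du_1\wedge\cdots\wedge du_n$ with $p$ polynomial is homogeneous of degree $\deg p+n>0$ unless $p=0$, forcing $\Omega=0$. The main obstacle is making this residue computation along $V_J$ precise with the correct orientation bookkeeping and identifying the contributing $\Jpr$ with the maximal simplices of $\rho|J$; once this is in hand the induction closes.
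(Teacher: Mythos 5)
Your treatment of (i), (iii), (iv), (v) matches the paper's: everything unwinds directly from the defining formula for $\iota_I^*(\rho_*(x))$, with \eqref{eq:iotarho} from Lemma~\ref{lemm:rhostar} doing the only non-mechanical work in (iii). The place where your route genuinely diverges is (ii). The paper disposes of it in one line by invoking, without proof, the identity
\[
 \sum_{\Ipr\in\sigmprn,\,\rho(\Ipr)=I}\omeIpr=\omeI,
\]
which (since $\omeI=\theta/|H_I|u_I$ with $\theta$ independent of $I$) is precisely the classical ``push-forward of~$1$ is~$1$'' statement $\sum_{\rho(\Ipr)=I}\bigl(|H_{\Ipr}|u_{\Ipr}\bigr)^{-1}=\bigl(|H_I|u_I\bigr)^{-1}$ for a simplicial subdivision of a single cone. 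You instead supply a self-contained proof of the equivalent $\Omega$-version by induction on $n$: no interior poles by the pairing-of-adjacent-cones cancellation already used in the preceding proposition, no facet poles because the residues on each facet hyperplane $V_J$ match by the inductive hypothesis applied to the induced subdivision $\rho|J$ of Lemma~\ref{lemm:rhoJ}, and finally homogeneity of degree $0$ kills the remaining holomorphic form. This is a correct argument and arguably an improvement in rigor: the paper treats $\sum\omeIpr=\omeI$ as folklore, whereas you derive it from ingredients already at hand. The costs are (a) the orientation bookkeeping in the residue step, which you flag yourself and which does need to be done carefully (it is the same bookkeeping as in Lemma~\ref{lemm:rhoJ}, so the tools are there), and (b) a small reordering of emphasis: you call (ii) ``where the real work lies,'' but logically the burden is the subdivision identity itself, which the paper regards as known; once it is granted, (ii) is as immediate as the other parts. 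As a minor point, in your sketch of (i) it would be worth remarking that $\rho_2\circ\rho_1$ again satisfies conditions a) and b), so that the left-hand side is even defined; the paper is equally silent on this, so it is not a gap relative to the reference proof.
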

\begin{proof}
By Proposition \ref{prop:OmI}
%displaymath changed to prevent overfull (Taras)
\begin{multline*}
 \iota_I^*(\rho_{2*}(\rho_{1*}(x)))\Omega^I=
\sum_{\rho_2(\Ipr)=I}\iota_{\Ipr}^*(\rho_{1*}(x))\Omega^{\Ipr}
=\sum_{\rho_2(\Ipr)=I}\sum_{\rho_1''(I'')=\Ipr}
\iota_{I''}^*(x)\Omega^{I''} \\
=\sum_{(\rho_2\circ \rho_1)(I'')=I}\iota_{I''}^*(x)\Omega^{I''}
=\iota_I^*((\rho_2\circ \rho_1)_*(x))\OmeI.
\end{multline*}
Thus $\rho_{2*}\circ\rho_{1*}=(\rho_2\circ \rho_1)_*$.
$(id_*)=id$ clearly holds.

Since $\iota_I^*(1)=1$, $\iota_I^*(\rho_*(1))\omeI=
\sum_{\rho(\Ipr)=I}\omeIpr=\omeI$. Thus $\rho_*(1)=1$.

In order to prove the equality $\rho_*(x\rho^*(y))=\rho_*(x)y$ we may
assume $y=x_i\in H_T^2(\Delta,\V)$ as is easily seen. Then
\[ \iota_I^*(\rho_*(x\rho^*(x_i)))\OmeI
 =\sum_{\rho(\Ipr)=I}\iota_{\Ipr}^*(x\rho^*(x_i))\OmeIpr
 =\sum_{\rho(\Ipr)=I}\iota_{\Ipr}^*(x)\iota_{\Ipr}^*(\rho^*(x_i))\OmeIpr
\]
But $\iota_{\Ipr}^*(\rho^*(x_i))=\iota_I^*(x_i)$ by \eqref{eq:iotarho}. Hence
\[ \iota_I^*(\rho_*(x\rho^*(x_i)))\OmeI
 =\iota_I^*(x_i)\sum_{\rho(\Ipr)=I}\iota_{\Ipr}^*(x)\OmeIpr
 =\iota_I^*(x_i)\iota_I^*(\rho_*(x))\OmeI
 =\iota_I^*(\rho_*(x)x_i)\OmeI. \]
Thus $\rho_*(x\rho^*(x_i))=\rho_*(x)x_i$.

Since $\rho^*(u)=u$ for $u\in L_\Q^*$ we have
\[ \rho_*(ux)=\rho_*(x\rho^*(u))=\rho_*(x)u. \]
This shows that $\rho_*$ is an $H_T^*(BT)\otimes\Q$-module map.

Finally, admitting the condition c),
\begin{align*}
 \pi_*(\rho_*(x))&=\sum_{I\in \sigmn}\frac{w(I)\iota_I^*(\rho_*(x))}
 {|H_I|u_I} \\
 &=\sum_I\frac{1}{|H_I|u_I}\sum_{\rho(\Ipr)=I}
 \frac{w'(I')\iota_{\Ipr}^*(x)|H_I|u_I}{|H_{\Ipr}|u_{\Ipr}}
 =\sum_{\Ipr\in\sigmprn}\frac{w'(I')\iota_{\Ipr}^*(x)}
 {|H_{\Ipr}|u_{\Ipr}}
 =\pi_*(x)
\end{align*}
\end{proof}

The map $\rho_*:H_T^*(\delpr,\Vpr)\otimes\Q\to
H_T^*(\Delta,\V)\otimes\Q$ extends in an obvious way to
$\rho_*:(H_T^*(\delpr,\Vpr)\otimes\C)[[q]]\to
(H_T^*(\Delta,\V)\otimes\C)[[q]]$.

\begin{theo}\label{thm:rhoclass}
Let $\Delta$ be a simplicial multi-fan in a lattice $L$ of dimension $n$.
Assume that every simplex $J\in \Sigma$ is contained in some $J\in \sigmn$.
Let $\rho:\delpr \to \Delta$ a map satisfying a) and b).
Let $\V=\{v_i\}_{i\in \sigmone}$ and $\Vpr=\{v_{\ipr}\}_{\ipr \in \sigmprone}$
be sets of edge vectors for $\Delta$ and
$\delpr$ respectively. Put $\xipr=\rho^*(\xi)$.
Then the following equality holds:
\[ \rho_*(\hatEst(\delpr,\Vpr,\xipr))=\hatEst(\Delta,\V,\xi). \]
\end{theo}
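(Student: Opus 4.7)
The plan is to verify the identity simplex-by-simplex: since $\bigoplus_{I\in\sigmn}\iota_I^*$ embeds $H_T^*(\Delta,\V)\otimes\Q$ into $(H^*(BT)\otimes\Q)^{\sigmn}$, it suffices to check, for every $I\in\sigmn$, that
\[
\iota_I^*\bigl(\rho_*(\hatEst(\delpr,\Vpr,\xipr))\bigr)=\iota_I^*\bigl(\hatEst(\Delta,\V,\xi)\bigr).
\]
My first step would be to localize both sides to the star of $I$. Set $\Sigma(I)=\{J\in\Sigma\mid J\subset I\}$ and $\sigmpr(I)=\{\Jpr\in\sigmpr\mid \rho(\Jpr)\subset I\}$, and write $(\Delta(I),\V_I,\xi_I)$ and $(\delpr(I),\Vpr_I,\xipr_I)$ for the associated local data. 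The vanishing argument in the proof of Proposition \ref{prop:est2} (pairs $(g_1,g_2)\notin G_{\Ipr}\times G_{\Ipr}$ kill under $\iota_{\Ipr}^*$) gives, for each $\Ipr$ with $\rho(\Ipr)=I$,
\[
\iota_{\Ipr}^*\bigl(\hatEst(\delpr,\Vpr,\xipr)\bigr)=\iota_{\Ipr}^*\bigl(\hatEst(\delpr(I),\Vpr_I,\xipr_I)\bigr),
\]
and the analogous equality for $\Delta(I)$; thus everything in sight depends only on the cone $C(I)$.

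Inserting these local expressions into the defining formula $\iota_I^*(\rho_*(x))=|H_I|u_I\sum_{\rho(\Ipr)=I}\iota_{\Ipr}^*(x)/(|H_{\Ipr}|u_{\Ipr})$, I recognise the right-hand sum as $|H_I|u_I$ times the push-forward $\pi_*$ applied to $\hatEst(\delpr(I),\Vpr_I,\xipr_I)$ for the fan $\delpr(I)$ (carrying the trivial weight $w'=1$). Invoking $\hatvarepst=\pi_*(\hatEst)$, I obtain
\[
\iota_I^*\bigl(\rho_*(\hatEst(\delpr,\Vpr,\xipr))\bigr)=|H_I|u_I\,\hatvarepst(\delpr(I),\Vpr_I,\xipr_I),
\]
and the same mechanism applied to $\Delta(I)$ yields $\iota_I^*(\hatEst(\Delta,\V,\xi))=|H_I|u_I\,\hatvarepst(\Delta(I),\V_I,\xi_I)$. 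The desired class identity therefore collapses to the purely local equality $\hatvarepst(\delpr(I),\Vpr_I,\xipr_I)=\hatvarepst(\Delta(I),\V_I,\xi_I)$ on the single cone $C(I)$.

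To close the argument, I would appeal to Remark \ref{rem:crepant}: the fans $\Delta(I)$ and $\delpr(I)$ both fulfil the standing hypothesis that every simplex is contained in a top-dimensional one, and the restricted map $\rho:\sigmpr(I)\to\Sigma(I)$ still satisfies conditions a), b), so Theorem \ref{thm:Bn} delivers
\[
\hatvarst(\delpr(I),\Vpr_I,\xipr_I)=\hatvarst(\Delta(I),\V_I,\xi_I)
\]
as formal power series in $q$ with rational-function coefficients, in the sense of Remark \ref{rem:BL}. Passing through the extended Chern character of Remark \ref{rem:chhatvar} transfers this to the required equality of $\hatvarepst$'s.

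The substantive content is the local invariance already established in Section 7; the main obstacle I anticipate is purely bookkeeping, namely that the local fans $\Delta(I)$ and $\delpr(I)$ are incomplete, so the clean statements (Theorem \ref{theo:laurent}, Theorem \ref{thm:BL2}, and the Proposition identifying $ch(\hatvarst)$ with $\hatvarepst$) do not apply verbatim; every equality here must be read in the extended sense of Remarks \ref{rem:BL} and \ref{rem:chhatvar}. Once one verifies that the Chern character, so extended, still intertwines the two sides of the local invariance, the theorem is an immediate consequence of the identification of $\iota_I^*\circ\rho_*$ with $|H_I|u_I\,\hatvarepst$ carried out above.
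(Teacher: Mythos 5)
Your proposal is correct and follows essentially the same route as the paper: localize to the star of $I$, unpack $\iota_I^*\circ\rho_*$ as $|H_I|u_I$ times a local push-forward $\pi_*$, reduce to the equality of local genera $\hatvarepst$, and invoke Remark \ref{rem:crepant} (Theorem \ref{thm:Bn}) together with the extended Chern character of Remark \ref{rem:chhatvar}. The only cosmetic difference is that the paper packages the recognition of the sum as $\pi_*$ via Proposition \ref{prop:rhostar}(v), while you unwind it directly from the defining formula for $\rho_*$.
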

\begin{proof}
It is enough to show that
\begin{equation}\label{eq:iotahat}
 \iota_I^*(\rho_*(\hatEst(\delpr,\Vpr,\xipr)))
 =\iota_I^*(\hatEst(\Delta,\V,\xi))
\end{equation}
for any $I\in \sigmn$. Let $\Sigma(I)$ be the simplicial set consisting of
faces of $I$ and put $\sigmpr(I)=\{\Jpr\in \sigmpr\mid \rho(\Jpr)\subset I\}$.
Cone structures of $\Delta$ and $\delpr$ restricted on $\Sigma(I)$ and
$\sigmpr(I)$ define fans $\Delta(I)$ and
$\delpr(I)$ respectively. We forget $w$ and $w'$ in $\Delta$ and $\delpr$,
and put $w(I)=1$ and $w'(\Ipr)=1$ for all
$\Ipr\in \sigmpr(I)^{(n)}$. With this understanding,
$\rho:\sigmpr(I)\to\Sigma(I)$ satisfies the conditions a), b) and c).
Then
\begin{align*}
\iota_I^*(\hatEst(\Delta,\V,\xi))&=\iota_I^*(\hatEst(\Delta(I),\V|I,\xi|I)) \\
\iota_I^*(\rho_*(\hatEst(\delpr,\Vpr,\xipr)))&=
 \iota_I^*(\rho_*(\hatEst(\delpr(I),\Vpr|I,\xipr|I))).
\end{align*}
By definition
\[\pi_*(\hatEst(\Delta(I),\V|I,\xi|I))=\hatvarepst(\Delta(I),\V|I,\xi|I),\]
and by Proposition \ref{prop:rhostar}
\[ \pi_*(\rho_*(\hatEst(\delpr(I),\Vpr|I,\xipr|I)))
 =\pi_*(\hatEst(\delpr(I),\Vpr|I,\xipr|I))
 =\hatvarepst(\delpr(I),\Vpr|I,\xipr|I).
\]
On the other hand, by
Theorem \ref{thm:crepant1} and
Remark \ref{rem:crepant}, one has
\[ \hatvarst(\delpr(I),\Vpr|I,\xipr|I) =\hatvarst(\Delta(I),\V|I,\xi|I). \]
From this equality and Remark \ref{rem:chhatvar} one gets
\[\begin{split}
\hatvarepst(\delpr(I),\Vpr|I,\xipr|I)
&=ch(\hatvarst(\delpr(I),\Vpr|I,\xipr|I)) \\
&=ch(\hatvarst(\Delta(I),\V|I,\xi|I))=\hatvarepst(\Delta(I),\V|I,\xi|I).
\end{split} \]
It follows that
\[ \pi_*(\rho_*(\hatEst(\delpr(I),\Vpr|I,\xipr|I)))=
 \pi_*(\hatEst(\Delta(I),\V|I,\xi|I)). \]
Since $\iota_I^*(x)=|H_I|u_I\pi_*(x)$ on $\Delta(I)$
we get \eqref{eq:iotahat}.
\end{proof}

\section {Generalization to $\Q$-Cartier triples}

So far we dealt only with simplicial multi-fans.
A (not necessarily simplicial) multi-fan
in an $n$-dimensional lattice $L$ is a triple
$\Delta=(\Sigma,C,w^\pm)$ where $\Sigma$ is a partially ordered
set with a unique minimum element $*$. We denote the partial
ordering by $\preceq$. $C$ is a map from $\Sigma$ to the set of
strongly convex rational cones in $L_\R$ satisfying the following
three conditions:
\begin{enumerate}
\item $C(*)=\{0\}$;
\item If $K\preceq J$ for $K,J\in \Sigma$, then $C(K)$ is a face of
$C(J)$;
\item For any $J\in \Sigma$ the map $C$ restricted on
$\{K\in \Sigma\mid K\preceq J\}$ is  an isomorphism of ordered sets
onto the set of faces of $C(J)$.
\end{enumerate}
For an integer $k$ with $0\leq k\leq n$ we set
\[ \sigmk=\{K\in \Sigma\mid \dim C(K)=k \}. \]
$w^\pm$ are maps $\sigmn\to \Z_{\geq 0}$.

A multi-fan is said complete, as in the case of simplicial multi-fans,
if it satisfies the condition stated in Definition in Section 2.
For $K\in\sigmk$ the projected multi-fan $\Delta_K$ and its degree are
also defined in a similar way as in the case of simplicial multi-fans.

By a triangulation of $\Delta$ we mean a simplicial multi-fan
$\delpr=(\sigmpr,\Cpr,w^{'\pm})$ in the same lattice $L$ related
to $\Delta$ in the following way:
\begin{enumerate}
\renewcommand{\labelenumi}{\alph{enumi})}
\item There is a bijection $\kappa:\sigmone\to\Sigma^{\prime(1)}$
satisfying
$C(\kappa (i))=C^\prime(\kappa(i))$ for each $\jpr \in \sigmpr$.
\item For each simplex $J^\prime \in \sigmpr$ there is an element
$J\in \Sigma$
such that $\Cpr(\Jpr)\subset C(J)$. Moreover, for each $J\in \Sigma$,
the collection $\{\Cpr(\Jpr)\mid \Jpr\in \sigmpr, \Cpr(\Jpr)\subset C(J)\}$
gives a subdivision of the cone $C(J)$.
We shall denote by $\rho(\Jpr)$ the minimal element $J\in \Sigma$ such that
$\Cpr(\Jpr)\subset C(J)$.
\item For $\Ipr\in \sigmprn$
\[ w^{\prime\pm}(\Ipr)=w^\pm(\rho(\Ipr)). \]
In particular $w^\prime(\Ipr)=w(\rho(\Ipr))$.
\end{enumerate}

Returning to general multi-fans we shall assume that every $J\in \Sigma$
is contained in some $I\in \sigmn$ hereafter.
Let $\V=\{v_i\}_{i\in \sigmone}$ be a set of non-zero vectors
$v_i\in L\cap C(i)$. A set of rational numbers
$\xi=\{d_i\}_{i\in \sigmone}$ is called \emph{$\Q$-Cartier} if there is an
element $u(I)\in L_\Q^*$ for each $I\in\sigmn$ such that
\begin{equation*}\label{eq:QCartier}
 \l u(I), v_i\r =d_i \quad \text{for $i\in I$}.
\end{equation*}
The pair $(\Delta,\V)$ is called \emph{$\Q$-Gorenstein} if there is an
element $u(I)\in L_\Q^*$ for each $I\in\sigmn$ such that
\begin{equation*}\label{eq:QGoren}
 \l u(I), v_i\r =1 \quad \text{for $i\in I$}.
\end{equation*}
When $\Delta$ is simplicial every $\xi$ is $\Q$-Cartier and
every pair $(\Delta,\V)$ is $\Q$-Gorenstein.

Take a triangulation $\delpr=(\sigmpr,\Cpr,w^\pm)$ of $\Delta$. Then
the collection $\xi$ determines a
$\Q$-divisor $\xipr=\sum_id_ix_i\in H_T^2(\delpr,\V)\otimes\Q$ on $\delpr$.

\begin{theo}\label{thm:QCartier}
Let $(\Delta,\V,\xi)$ be as above. If $\xi$ is $\Q$-Cartier, then the orbifold
elliptic genus $\hatvarst(\delpr,\V,\xi)$ does not depend on
$\sigmpr$. It depends only on $(\Delta,\V,\xi)$.
\end{theo}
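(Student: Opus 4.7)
The plan is to deduce the independence from the functorial invariance of Theorem~\ref{thm:crepant1}. Let $\delpr$ and $\delprpr$ be two triangulations of $\Delta$ in the sense of Section~9 (so both have the same $1$-skeleton $\sigmone$ as $\Delta$). I would first construct a common simplicial refinement: a simplicial multi-fan $\delstar=(\sigmstar,\Cstar,w^{\star\pm})$ in $L$ equipped with morphisms $\rho':\delstar\to\delpr$ and $\rho'':\delstar\to\delprpr$ satisfying conditions a), b), c) of Section~6. Such $\delstar$ is built cone by cone: on each maximal cone $C(I)$ ($I\in\sigmn$) the two triangulations induce two simplicial subdivisions using only the rays of $\Delta$ contained in $C(I)$, and one takes a simultaneous simplicial refinement (in general introducing new rays in the interior of $C(I)$), with weights defined by $w^{\star\pm}(I^\star):=w^\pm(I)$ for every $I^\star\in\sigmstar^{(n)}$ lying over $I$. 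Condition c) then holds for both $\rho'$ and $\rho''$, and completeness of $\delstar$ follows from that of $\Delta$, as in Theorem~\ref{thm:crepant1}.

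Next, using the $\Q$-Cartier hypothesis on $\xi=\{d_i\}_{i\in\sigmone}$, I extend it to a $\Q$-divisor $\xistar=\sum_{k\in\sigmstar^{(1)}}\tilde d_k\,x_k\in H_T^2(\delstar,\Vstar)\otimes\Q$. For each $I\in\sigmn$ fix $u(I)\in L_\Q^*$ with $\langle u(I),v_i\rangle=d_i$ for every edge $i$ of $I$; for $k\in\sigmstar^{(1)}$ with $\Cstar(k)\subset C(I)$ set $\tilde d_k:=\langle u(I),v_k\rangle$. The value is well defined: if $\Cstar(k)$ also lies in some other $C(I')$, then $\Cstar(k)\subset C(I)\cap C(I')=C(K)$ for the common face $K$ of $I$ and $I'$; the functionals $u(I)$ and $u(I')$ take the same values on every edge vector $v_i$ of $K$, hence on the $\Q$-span of those vectors, which contains $v_k$. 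When $k$ is already a ray of $\Delta$, the formula recovers $d_k$, so $\xistar$ agrees with $\xipr$ and $\xiprpr$ on the old rays.

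The heart of the argument is the pull-back identity $(\rho')^*(\xipr)=\xistar$ and $(\rho'')^*(\xiprpr)=\xistar$. Writing $v_k=\sum_{i\in\rho'(k)}a_{k,i}\,v_i$ as in Section~6, the coefficient of $x_k$ in $(\rho')^*(\xipr)=\sum_i d_i(\rho')^*(x_i)$ equals
\[
\sum_{i\in\rho'(k)}a_{k,i}\,d_i=\sum_{i\in\rho'(k)}a_{k,i}\,\langle u(I),v_i\rangle=\langle u(I),v_k\rangle=\tilde d_k,
\]
where $I\in\sigmn$ is any maximal cone with $\rho'(k)\subset I$; the same computation applies to $\rho''$. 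Theorem~\ref{thm:crepant1} applied to the morphisms $\rho',\rho''$ of complete simplicial multi-fans then yields
\[
\hatvarst(\delstar,\Vstar,\xistar)=\hatvarst(\delpr,\V,\xipr)=\hatvarst(\delprpr,\V,\xiprpr),
\]
which is the asserted independence.

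The principal obstacle is the geometric-combinatorial step of constructing $\delstar$ with compatible morphisms $\rho',\rho''$: the simplicial refinements chosen inside distinct maximal cones of $\Delta$ must glue consistently along their shared lower-dimensional faces, so one should first fix a compatible simplicial refinement of the $(n-1)$-skeleton common to $\delpr$ and $\delprpr$ and then extend inward cone by cone. Once $\delstar$ is in place, the remaining steps are essentially formal consequences of the $\Q$-Cartier structure and a single application of the already-proved Theorem~\ref{thm:crepant1}.
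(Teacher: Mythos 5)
Your strategy is genuinely different from the paper's. You reduce the independence to two applications of the already-proved Theorem~\ref{thm:crepant1} through a common simplicial refinement $\delstar$ of $\delpr$ and $\delprpr$, whereas the paper does \emph{not} construct a common refinement at all: it replays the inductive $b_J$-decomposition of Theorem~\ref{thm:Bn} in a ``flop'' form ($B_n^{flop}$), constructing for each $I\in\sigmn$ two cone-complexes $\delprstar$ and $\delprprstar$ (each made from a single triangulation by a barycentric-type construction over a scaled slice $S_m$), gluing them along their common boundary to form a complete multi-fan $\delstar$, and invoking the vanishing theorem~\ref{theo:varrigid} on this glued object. The two proofs thus use the main tools of Sections~4--7 very differently. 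Your approach, if the intermediate object exists, is cleaner and shorter: it turns the change-of-triangulation statement into pure functoriality. The paper's approach is more self-contained; it never needs to know that a common refinement exists.

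The genuine gap is exactly the one you flagged, and it is larger than your discussion suggests. For ordinary fans the existence of a common simplicial refinement is standard, but here $\Delta$ is a multi-fan, $\delpr$ and $\delprpr$ are simplicial multi-fans whose maximal cones may overlap geometrically, and you need $\delstar$ to satisfy conditions a), b), c) with respect to \emph{both} $\rho'$ and $\rho''$ simultaneously. Two specific points in your sketch do not go through as stated. First, the hint ``fix a compatible simplicial refinement of the $(n-1)$-skeleton common to $\delpr$ and $\delprpr$'' assumes the two triangulations already agree on lower-dimensional cones $C(J)$; in general they do not, so the refinement of the lower skeleta must itself be a (consistent) common refinement, constructed by the same procedure one level down --- this is an honest induction that needs to be run. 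Second, the well-definedness argument for $\tilde d_k$ appeals to ``$\Cstar(k)\subset C(I)\cap C(I')=C(K)$ for the common face $K$,'' but in a multi-fan the geometric intersection $C(I)\cap C(I')$ need not be a common face. The correct argument is the one the $\Q$-Cartier definition actually provides: take the \emph{minimal} $J\in\Sigma$ with $\Cstar(k)\subset C(J)$ (this is the element over which $\rho\circ\rho'$ sends $k$), observe that $v_k\in(L_J)_\Q$, and note that for any two $I_1,I_2\in\sigmn$ with $J\preceq I_1$ and $J\preceq I_2$ the functionals $u(I_1)$ and $u(I_2)$ agree on $v_i$ for all $i\preceq J$ and hence on $(L_J)_\Q\ni v_k$. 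The pull-back computation $(\rho')^*(\xipr)=\xistar$ is then correct, and the two invocations of Theorem~\ref{thm:crepant1} finish the argument. So your proposal is a viable alternative route, but it requires proving a non-trivial combinatorial lemma (existence of a compatible common simplicial refinement of two triangulations of a multi-fan) that the paper's proof is specifically engineered to avoid.
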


\begin{coro}\label{coro:QGoren}
Let $(\Delta,\V)$ be a $\Q$-Gorenstein pair. Then the orbifold
elliptic genus $\hatvarst(\delpr,\V)$ does not depend on
$\sigmpr$ giving an invariant of $(\Delta,\V)$.
\end{coro}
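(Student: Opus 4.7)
The plan is to derive the statement from Theorem \ref{thm:crepant1} by bridging two triangulations through a common simplicial refinement. Let $\delpr$ and $\delprpr$ be two triangulations of $\Delta$, yielding $\Q$-divisors $\xipr \in H_T^2(\delpr,\V)\otimes\Q$ and $\xiprpr \in H_T^2(\delprpr,\V)\otimes\Q$ respectively from the common data $\xi = \{d_i\}_{i\in\sigmone}$. The first step will be to construct a simplicial multi-fan $\delstar = (\sigmstar,\Cstar,w^{*\pm})$ that simultaneously refines $\delpr$ and $\delprpr$: for each $I \in \sigmn$ take the polyhedral common refinement of the simplicial subdivisions of $C(I)$ given by $\delpr$ and $\delprpr$, and then triangulate further, introducing auxiliary primitive integral edge vectors if necessary. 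With weights defined by $w^{*\pm}(I^*) := w^{\pm}(\rho_0(I^*))$, where $\rho_0 : \sigmstar \to \Sigma$ is the unique map satisfying $\rho' \circ f_1 = \rho_0 = \rho'' \circ f_2$, this produces morphisms $f_1 : \delstar \to \delpr$ and $f_2 : \delstar \to \delprpr$ each satisfying conditions a), b), c) of Section 6.

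The crucial observation will be that $f_1^*(\xipr) = f_2^*(\xiprpr)$ in $H_T^2(\delstar,\Vstar)\otimes\Q$. Indeed, the $\Q$-Cartier hypothesis supplies, for each $I \in \sigmn$, a linear functional $u(I) \in L_\Q^*$ with $\l u(I), v_i \r = d_i$ for every $i \in I$. For any simplex $\Ipr \in \sigmprn$ with $\rho'(\Ipr) \subset I$, formula \eqref{eq:dipr2} identifies $\iota_{\Ipr}^*(\xipr)$ with the restriction of $u(I)$ to $L^*_{\Ipr,\V}$ inside $L^*\otimes\Q$, and the analogous identity holds for $\delprpr$. Transporting these identities along $f_1$ and $f_2$ via \eqref{eq:iotarho}, for $I^* \in \sigmstar^{(n)}$ with $\rho_0(I^*) \subset I$ both $\iota_{I^*}^*(f_1^*(\xipr))$ and $\iota_{I^*}^*(f_2^*(\xiprpr))$ coincide with $u(I)$ as elements of $L^*\otimes\Q$. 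Since \eqref{eq:pistarimage} embeds $H_T^2(\delstar,\Vstar)\otimes\Q$ into $\prod_{I^*\in\sigmstar^{(n)}} S^*(L_{I^*,\Vstar})\otimes\Q$, the equality of all these restrictions forces $f_1^*(\xipr) = f_2^*(\xiprpr)$.

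Theorem \ref{thm:crepant1} applied to $f_1$ and to $f_2$ then yields
\[
 \hatvarst(\delpr,\V,\xipr) = \hatvarst(\delstar,\Vstar,f_1^*(\xipr))
 = \hatvarst(\delstar,\Vstar,f_2^*(\xiprpr)) = \hatvarst(\delprpr,\V,\xiprpr),
\]
so the orbifold elliptic genus is independent of the triangulation; if $\Delta$ is not complete one invokes the non-complete version of Theorem \ref{thm:crepant1} recorded in Remark \ref{rem:crepant}. The main obstacle will be the combinatorial construction of the common refinement: one must verify that a simultaneous simplicial refinement of the two triangulations exists and that the composite maps into $\Sigma$ can be made to coincide, so that the weights $w^{*\pm}$ are unambiguously defined. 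A subsidiary issue, that distinct $I_1, I_2 \in \sigmn$ in a multi-fan may have overlapping cones, is resolved by the $\Q$-Cartier condition itself: the functionals $u(I_1)$ and $u(I_2)$ automatically agree on every edge vector $v_i$ with $i \in I_1 \cap I_2$, so the coefficient of $x_{i^*}$ in either pull-back is insensitive to the choice of ambient $I$.
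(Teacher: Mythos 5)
Your proposal takes a genuinely different route from the paper's proof. The paper proves Theorem \ref{thm:QCartier} (hence this corollary) by re-running the machinery of Theorem \ref{thm:Bn}: it formulates a local statement $B_n^{flop}$ comparing the two triangulations, constructs an auxiliary complete multi-fan by gluing $\delprstar$ and $\delprprstar$ (cones-over-boundaries attached via barycentric coning), applies the vanishing Theorem \ref{theo:varrigid}, and kills the excess terms by letting the scaling parameter $m\to\infty$. You instead treat Theorem \ref{thm:crepant1} as a black box and bridge $\delpr$ and $\delprpr$ by a common simplicial refinement $\delstar$ with maps $f_1,f_2$, showing $f_1^*(\xipr)=f_2^*(\xiprpr)$ directly from the $\Q$-Cartier datum $u(I)$. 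Your key computation — that $\iota_{I^*}^*(f_1^*(\xipr))$ and $\iota_{I^*}^*(f_2^*(\xiprpr))$ both equal $u(\rho_0(I^*))$ in $L_\Q^*$, and that injectivity of $\bigoplus\iota_{I^*}^*$ then forces equality — is correct, and this is precisely where the $\Q$-Cartier hypothesis enters; the fact that $u(I_1)$ and $u(I_2)$ agree on $v_i$ for $i$ a common face also correctly neutralizes the overlapping-cones ambiguity for the divisor. What your approach buys is modularity: no new limiting argument, no new vanishing application. What it costs is a separate existence lemma — that two triangulations of a (possibly overlapping-coned) multi-fan admit a common simplicial refinement $\delstar$ with $\rho'\circ f_1=\rho''\circ f_2$, so that the weights $w^{*\pm}$ are unambiguous — which is standard for ordinary fans (intersect cells, then take a placing/pulling triangulation compatible on shared faces, working $I$-by-$I$) but is not stated or proved in the paper and does need to be written out carefully, particularly the consistency of triangulations across faces shared by distinct top-dimensional elements of $\Sigma$. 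You flag this yourself, and with that lemma supplied your argument closes.
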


\begin{proof}
Take triangulations $\delpr$ and $\delprpr$ of $\Delta$. Fix $I\in
\sigmn$ and put $\Sigma(I)=\{J\in \Sigma\mid J\subset I\}$. Then
$\Delta(I)=(\Sigma(I),C|\Sigma(I))$ determines a fan. Note that we
are neglecting the functions $w^\pm$ defined on $\Delta(I)$ for
the moment. We then define
\[ b_J(\delpr,\V,\xipr;\delprpr,\V,\xiprpr):=
 \sum_{\Jpr, \rho(\Jpr)=J}b_{\Jpr}(\delpr,\V,\xipr)
 -\sum_{\Jprpr, \rho(\Jprpr)=J}b_{\Jprpr}(\delprpr,\V,\xiprpr). \]
for $J\in \Sigma(I)$. We shall prove the following fact by
induction on $n$.
\vspace*{0.3cm}\\
$B_n^{flop}$ \hspace{1cm} $b_J(\delpr,\V,\xipr;\delprpr,\V,\xiprpr)=0$
for $\dim L=n$ and for all $J\in \Sigma(I)$.
\vspace*{0.3cm}\\
This will prove Theorem \ref{thm:QCartier} in view of \eqref{algn:BL} as in
the proof of Theorem \ref{thm:crepant1}.

The cases $n=1$ and $n=2$ is trivial since every cone is simplicial
in these cases.

Suppose $n\geq 3$. Let $\delpr(I)$ and $\delprpr(I)$ be the triangulations
of $\Delta(I)$ induced by $\delpr$ and $\delprpr$ respectively.
We construct fans
$\delprstar=(\sigmprstar,\Cprstar)$ and
$\delprprstar=(\sigmprprstar,\Cprprstar)$ from $\delpr(I)$ and $\delprpr(I)$
respectively in a similar way to the proof of Theorem \ref{thm:Bn}. Namely
\[ \sigmprstar=\sigmpr\cup\{\Jpr*(J_1\cdots J_k)\mid
\rho(\Jpr)\subset
J_1\subsetneqq \ldots\subsetneqq J_k \subsetneqq I\},\]
and
\[ \sigmprprstar=\sigmprpr\cup\{\Jprpr*(J_1\cdots J_k)\mid
\rho(\Jprpr)\subset
J_1\subsetneqq \ldots\subsetneqq J_k \subsetneqq I\}.\]
We set
\[ \partial\sigmprstar=\{(J_1\cdots J_k)\in \sigmprstar\},\quad
  \partial\sigmprprstar=\{(J_1\cdots J_k)\in \sigmprprstar\}. \]
They are isomorphic to the so-called order complex of the poset
$\Sigma(I)\setminus \Sigma(I)^n$ as simplicial complexes.
They will be identified with each other in the sequel. We have
\[ {\sigmprstar}^{(1)}={\sigmpr}^{(1)}\cup (\partial\sigmprstar)^{(1)} \
\text{where $(\partial\sigmprstar)^{(1)}=\{(J)\mid J\subsetneqq I\}$}, \]
and
\[ {\sigmprprstar}^{(1)}={\sigmprpr}^{(1)}\cup (\partial\sigmprprstar)^{(1)} \
\text{where $(\partial\sigmprprstar)^{(1)}=\{(J)\mid J\subsetneqq I\}$}. \]

Moreover $\Cprstar((J))=\Cprprstar((J))$ for
$(J)\in (\partial\sigmprstar)^{(1)}=(\partial\sigmprprstar)^{(1)}$.
Define a vector
$v_J\in \Cprstar((J))=\Cprprstar((J))$ as in the proof of Theorem \ref{thm:Bn}
and set
\[ \Vstar=\V\cup \{v_J\} \]
where  and $J$ ranges over $\sigmk,\ 0< k <n$.

The construction of $\delprstar$, $\delprprstar$ and $v_J$ depends on
an integer $m$. Similarly to the proof of
Theorem \ref{thm:Bn} it can be shown that these vectors $v_J$ satisfy
the property:
\begin{equation}\label{eq:vJinfty}
\text{The absolute value of $\l u,v_J\r$ tends to $\infty$ as
$m$ tends to $\infty$ for any $u\in L^*$}.
\end{equation}
Then the number $d_J$ is defined by
\begin{equation*}
 d_J=\l u^I(\xi), v_J\r,
\end{equation*}
and the $\Q$-divisor $\xiprstar$ by
\[ \xiprstar=\sum_{i'\in \sigmprone}d_{i'}x_{i'}+
 \sum_{(J)\in (\partial\sigmprstar)^{(1)}}d_Jx_J, \]
where $x_J$ is the basis element in $H_T^2(\delprstar,\Vprstar)$
corresponding to $(J)\in (\partial\sigmprstar)^{(1)}$.
The $\Q$-divisor $\xiprprstar$ is similarly defined.

We glue $\sigmprstar$ and $\sigmprprstar$ along the common boundary
$\partial\sigmprstar=\partial\sigmprprstar$ to obtain a multi-fan
$\delstar$. The functions $w^\pm$ are defined in such a way that
\begin{align*}
   w(J_*)=\begin{cases} 1, &  \text{for $J_*\in(\sigmprstar)^{(n)}$,} \\
           -1, &  \text{for $J_*\in\sigmstar^{(n)}$.} \end{cases}
\end{align*}
The multi-fan $\delstar$ is complete and the following equality holds.
\begin{equation}\label{eq:star2}
\deg((\delstar)_{J_*})=\begin{cases}
      +1, & J_*\in \sigmprstar\setminus \partial\sigmprstar, \\
      -1, & J_*\in \sigmprprstar\setminus \partial\sigmprprstar, \\
      \ \ 0, & J_*\in \partial\sigmprstar=\partial\sigmprprstar.
 \end{cases}
\end{equation}
The proof of these facts is similar to that of Theorem \ref{thm:Bn}.

The $\Q$-divisors $\xiprstar$ on $\sigmprstar$ and
$\xiprprstar$ on $\sigmprprstar$ define a $\Q$-divisor $\xistar$
on $\sigmstar$. Put $u=u^I(\xi)=\sum_{i\in I}d_i\u$ as before. Then
\[ \xistar=u \in H_T^2(\delstar,\Vstar)\otimes \Q. \]

We apply Theorem \ref{theo:varrigid} and get
\begin{equation}\label{eq:hatvarst9}
 \hatvarst(\delstar,\Vstar,\xistar)=0.
\end{equation}
On the other hand by \eqref{eq:star2} we have
\[\hatvarst(\delstar,\Vstar,\xistar)=
b_(\delpr,\V,\xipr;\delprpr,\V,\xiprpr)+b_2, \]
where
\[ b_2=\sum_{J\in \Sigma(I),J\not=I}\Biggl(
\sum_{\Jpr*K,\rho(\Jpr)=J}b_{\Jpr*K}(\delprstar,\Vstar,\xiprstar)
- \sum_{\Jprpr*K,\rho(\Jprpr)=J}
 b_{\Jprpr*K}(\delprprstar,\Vstar,\xiprprstar)\Biggr). \]
The term in the parenthesis for $K=\emptyset$ is equal to $0$ by
inductive assumption.
For $K\not=0$ the term tends to $0$ when $m$ tends to $\infty$
for any $u\in L^*$, as follows from a similar argument to the proof of
Theorem \ref{thm:Bn} using \eqref{eq:vJinfty}.
From this and \eqref{eq:hatvarst9} it follows that
$b_I(\delpr,\V,\xipr;\delprpr,\V,\xiprpr)$ must be equal to zero.
Together with inductive assumption this proves that
\[ b_J(\delpr,\V,\xipr;\delprpr,\V,\xiprpr)=0 \]
for all $J\in\Sigma(I)$.
Thus $B_n^{flop}$ holds and Theorem \ref{thm:QCartier} is proved.
\end{proof}

%bib.tex; platex2e

\providecommand{\bysame}{\leavevmode\hbox to3em{\hrulefill}\thinspace}

\end{document}